\documentclass[a4paper,12pt]{article}

\usepackage{amsthm,amsmath,stmaryrd,bbm,hyperref,geometry,color,authblk}
\usepackage[utf8]{inputenc}
\usepackage{amssymb}
\usepackage{graphicx}
\usepackage{amsfonts,amssymb}
\usepackage{verbatim}
\usepackage{enumitem}
\usepackage[dvipsnames]{xcolor}
\newcommand{\po}{\left(}
\newcommand{\pf}{\right)}
\newcommand{\co}{\left[}
\newcommand{\cf}{\right]}
\newcommand{\cco}{\llbracket}
\newcommand{\ccf}{\rrbracket}
\newcommand{\R}{\mathbb R} 
\newcommand{\T}{\mathbb T} 
\newcommand{\Z}{\mathbb Z} 
\newcommand{\N}{\mathbb N} 
\newcommand{\dd}{\text{d}}
\newcommand{\bX}{\mathbf{X}}

\newcommand{\bx}{\mathbf{x}}

\newcommand{\na}{\nabla}
\newcommand{\1}{\mathbbm{1}} 

\newcommand{\hF}{\widehat{\mathcal F}}
\newcommand{\bmu}{\bar\mu_\theta}
\newcommand{\rp}{\psi}
\newcommand{\C}{\mathcal C}

\newcommand{\re}[1]{#1}
\newcommand{\new}[1]{#1}

\newtheorem{theorem}{Theorem}
\newtheorem{assumption}{Assumption}
\newtheorem{lemma}[theorem]{Lemma}

\newtheorem{proposition}[theorem]{Proposition}
\newtheorem{remark}{Remark}
\newtheorem{example}{Example}

\title{Equivalence between $N$-particle log-Sobolev inequalities and non-linear Łojasiewicz inequalities for a  class of mean field systems}
\author[1,2]{Pierre Monmarché\thanks{pierre.monmarche@univ-eiffel.fr}}
\affil[1]{LAMA, Université Gustave Eiffel, France}
\affil[2]{Institut Universitaire de France}

\begin{document}

\maketitle

\begin{abstract}
The  convergence rate of a free energy Wasserstein gradient flow is quantified by its so-called Polyak-Łojasiewicz (PŁ) constant $\lambda$, which relates the objective function to its dissipation along the flow. Such a  flow is the mean-field limit as $N$ goes to infinity of a system of $N$ interacting particles, whose convergence rate in relative entropy towards its Gibbs measure is quantified by its log-Sobolev constant $\lambda_N$. Different behaviours of $\lambda_N$ as $N$ goes to infinity  thus describe drastically different phenomena, such as fast relaxation or metastability, with many models undergoing phase transitions between these regimes, depending typically on temperature.  Under fairly general conditions, a uniform-in-$N$ log-Sobolev constant (i.e. fast exponential convergence for the particle system) is known to induce a positive PŁ constant (i.e. exponential convergence for the mean-field flow). A conjecture was stated by Delgadino, Gvalani, Pavliotis and Smith according to which the converse implication was true ($\lambda>0$ implies $\liminf \lambda_N >0$), even with $\lim \lambda_N =\lambda$.
 
 First, we will prove this converse implication, although without the equality $\lim \lambda_N = \lambda$, for a general class of mean-field models. Second, we also notice that this implication fails if the free energy minimiser is not unique, and provide an explicit counter-example.   Third, we also consider the same question of relating $N$-particle and mean-field inequalities in the context of more general Łojasiewicz inequalities, which correspond to polynomial (instead of exponential) convergence rates, and can describe the situation exactly at a phase transition.

\end{abstract}

\section{Introduction}

\subsection{Free energy minimization and mean-field Gibbs sampling}

For either $E=\R^d$ or $E=\T^d$ with $\T=\R/\Z$, denote by  $\mathcal P_2(E)$ the space of probability measures  on $\mu$ (with a finite second-order moment if $E=\R^d$). Consider some energy $\mathcal E:\mathcal P_2(E) \rightarrow (-\infty,\infty]$. A classical example in statistical physics is
\begin{equation}\label{eq:EVW}
    \mathcal E(\mu) = \int_E V(x)\mu(\dd x) + \frac12 \int_{E^2} W(x,y) \mu(\dd x)\mu(\dd y)\,,
\end{equation}
for some external potential $V:E\rightarrow \R$ and pairwise interaction potential $W:E^2\rightarrow \R$.  The  free energy (at temperature normalised to $1$) associated to $\mathcal E$ is defined as
\begin{equation}
\label{eq:defF}
\mathcal F(\mu) = \mathcal E(\mu) + \mathcal H(\mu)\,,
\end{equation}
where   $\mathcal H(\mu) = \int_{E} \mu \ln\mu$ stands for the entropy (with $\mathcal H(\mu)=+\infty$ if $\mu$ doesn't have a density).

For $N\geqslant 1$, writing
\[\pi_{\bx} = \frac1N\sum_{i=1}^N \delta_{x_i}\]
the empirical distribution of $\bx=(x_1,\dots,x_N)\in E^N$,  the $N$-particle Gibbs measure associated to $\mathcal E$ is the probability measure with density
\begin{equation}
\label{eq:defmuinftyN}
\mu_\infty^N(\bx) \propto \exp\po - U_N(\bx)\pf,\qquad U_N(\bx) = N \mathcal E \po \pi_{\bx}\pf\,, 
\end{equation}
assuming that $e^{-U_N}\in L^1$.

The problem of minimizing a free energy of the form~\eqref{eq:defF} arises in many fields. In statistical physics or mathematical biology, the mean-field evolution of many models can be described as a gradient descent of such a quantity, as in \cite{bashiri2020gradient,sandier2004gamma,bunne2022proximal}. In this situation, the entropy term accounts for microscopic stochastic effects (e.g. thermal agitation), and the Gibbs measure is the equilibrium density of the corresponding microscopic (or individual-based) model of $N$ particles.  This optimisation problem also arises for numerical reasons, for instance for high-dimensional machine learning algorithms~\cite{Szpruch,mei2018mean,geshkovski2025mathematical}, optimal transport~\cite{arbel2019maximum}, variational inference~\cite{arbel2019maximum,lambert2022variational}, optimization~\cite{chizat,peyre2015entropic} or sampling~\cite{wibisono2018sampling,lelievre2026convergence}. In this situation, the entropy is often a penalisation term added to enforce minimisers to be smooth and to improve the stability of the algorithms, and the Gibbs measure is the equilibrium of practical algorithms which are implemented with a finite number of particles.

Following the theory of gradient flows in metric spaces, the gradient flow of~\eqref{eq:defF} with respect to the $L^2$ Wasserstein distance $\mathcal W_2$ is
\begin{equation}
\label{eq:EDP_gradient_flow}
\partial_t \rho_t = \na\cdot \po \rho_t D\mathcal E(\rho_t,\cdot) \pf +\Delta\rho_t\,,
\end{equation}
with $D\mathcal E(\mu,x) = \na_x \frac{\delta \mathcal E}{\delta \mu}(\mu,x)$, where $\frac{\delta \mathcal E}{\delta \mu}$ is the first variation (or Fréchet derivative) of $\mathcal E$, see \cite{ambrosio2005gradient} for definitions and details. In all this work we assume that $D\mathcal E(\mu,\cdot)$ exists and is continuous for all $\mu\in\mathcal P_2(E)$. Under suitable conditions, it is known that $\rho_t$ is the mean-field limit as $N\rightarrow \infty$ of $\pi_{\bX_t}$ where the particle system $\bX_t=(X_t^1,\dots,X_t^N)\in E^N$ solves
\[\forall i\in\cco 1,N\ccf,\qquad \dd X_t^i = -D\mathcal E(\pi_{\bX_t},X_t^i) \dd t + \sqrt{2}\dd B_t^i\,,\]
where $\mathbf{B}=(B^1,\dots,B^N)$ are independent $d$-dimensional Brownian motions. This can be equivalently written as
\begin{equation}
\label{eq:EDS_Langevin}
\dd \bX_t = - \na U_N(\bX_t) \dd t + \sqrt{2}\dd \mathbf{B}_t\,.
\end{equation}
In other words, $\bX$ is a Langevin process, reversible with respect to $\mu_\infty^N$. Under mild conditions, it is ergodic and $\bX_t$ converges in law to $\mu_\infty^N$ as $t\rightarrow \infty$ (independently from the initial distribution).  The law $\rho_t^N$ of $\bX_t$ solves the Fokker-Planck equation
\begin{equation}\label{eq:EDP_N_particles}
\partial_t \rho_t^N = \na \cdot \po \rho_t^N \na U_N \pf + \Delta \rho_t^N\,. 
\end{equation}
Notice that this is of the form~\eqref{eq:EDP_gradient_flow}, i.e. $\rho_t^N$ follows the Wasserstein gradient flow in $\mathcal P_2(E^N)$ of the $N$-particle free energy $\int_{E^N} U_N \rho^N + \mathcal H(\rho_t^N) $. Up to an additive constant (which doesn't affect the gradient flow), this $N$-particle free energy is equal to the relative entropy $\mathcal H(\rho^N|\mu_\infty^N) = \int_{E^N} \rho_t^N \ln \frac{\rho_t^N}{\mu_\infty^N}$.

\subsection{Convergence rates}

Assume that $\mathcal F$ is lower bounded and write $\mathcal F^c=\mathcal F - \inf \mathcal F$. The dissipation of the free energy along the flow~\eqref{eq:EDP_gradient_flow} is
\begin{equation}
\label{eq:dissipation}
-\partial_t \mathcal F(\rho_t) = \int_{E} \left|\na \ln \rho_t + D\mathcal E(\rho_t,\cdot)\right|^2 \rho_t =:  \mathcal I(\rho_t)\,.
\end{equation}
As a consequence, the convergence rate of the gradient flow to the set of minimizers can be quantified in terms of the  Polyak-Łojasiewicz (PŁ) constant defined as
\begin{equation}
\label{eq:deflambdaPL}
\lambda = \inf \left\{\frac{\mathcal I(\rho)}{2 \mathcal F^c(\rho)},\, \rho \in  \mathcal P_2(E),\, \mathcal F^c(\rho)\neq 0\right\}\,.
\end{equation}
Indeed, due to~\eqref{eq:dissipation}, $\lambda$ is the largest constant such that
\[\forall t\geqslant 0,\ \forall \rho_0 \in\mathcal P_2(E),\qquad \mathcal F^c(\rho_t) \leqslant e^{-2\lambda t} \mathcal F^c(\rho_0)\,.\]
We say that $\mathcal F$ satisfies a PŁ inequality (with constant $\lambda$) if $\lambda>0$. This condition clearly implies that all critical points (i.e. all the stationary solutions of the gradient flow~\eqref{eq:EDP_gradient_flow}, for which $\mathcal I(\rho)=0$) are  global minimisers, and that the set of global minimisers is connected (otherwise a path of minimal height between two unconnected minimisers have to cross a saddle point).

For the particle system, corresponding to~\eqref{eq:dissipation}, the dissipation of the relative entropy along~\eqref{eq:EDP_N_particles} is given by the Fisher information:
\[-\partial_t \mathcal H\po \rho_t^N|\mu_\infty^N \pf = \int_{E^N} \left|\na \ln \frac{\rho_t^N}{\mu_\infty^N}\right|^2\rho_t^N =: \mathcal I(\rho_t^N|\mu_\infty^N)\,.\]
In this context, the PŁ constant
\begin{equation}
\label{eq:deflambdaNLSI}
\lambda_N = \inf \left\{\frac{\mathcal I(\rho|\mu_\infty^N)}{2 \mathcal H(\rho|\mu_\infty^N)},\, \rho \in  \mathcal P_2(E^N)\setminus\{\mu_\infty^N\}\right\}
\end{equation}
is more classically known as the log-Sobolev constant of $\mu_\infty^N$.  It is the largest constant such that
\[\forall t\geqslant 0,\ \forall \rho_0^N \in\mathcal P_2(E),\qquad \mathcal H(\rho_t^N|\mu_\infty^N) \leqslant e^{-2\lambda_N t} \mathcal H(\rho_0^N|\mu_\infty^N)\,.\]
We say that $\mu_\infty^N$ satisfies a log-Sobolev inequality (LSI) if $\lambda_N>0$. Moreover, we say that the LSI is uniform in $N$ if
\begin{equation}
\label{eq:LSI-unifN}
\underline{\lambda}:=  \liminf_{N\rightarrow \infty} \lambda_N > 0\,.
\end{equation}

\subsection{Motivation and objective}\label{sec:motivation}

Whether the LSI is uniform in $N$ or not (and, if not, how $\lambda_N$ vanishes with $N$) has important conceptual and practical consequences. In many cases where $\lambda_N$ goes to zero, actually, there is some $c>0$ such that $\lambda_N =\mathcal O( e^{-cN})$. This corresponds to  metastable behaviours of the particle system: relaxation to equilibrium requires rare transitions which occur at a time-scale of order $e^{cN}$ (and are not seen anymore for the mean-field limit~\eqref{eq:EDP_gradient_flow}). Rare events occurring at an exponential time-scale are in the same  family as random monkeys writing Hamlet: we don't see them in practice. From a modelling point of view, it means that what is observed in practice is not described by the relaxation to the Gibbs measure, and from an algorithmic point of view, it means that the simulation time is prohibitive (actually, beating metastability for sampling has been a major issue for decades, see e.g. \cite{lelievre2012two,lelievre2010free} and references within, and see \cite{MonmarcheLeclerc} in the mean-field situation considered here).

In many models (but not always, see \cite[Remark 1]{Mtoymodel} for a counter-example), the LSI can be shown to be uniform in $N$ for large temperature (i.e. for the free energy $\mathcal E + \sigma^2 \mathcal H$ with a large $\sigma^2$), in particular thanks to the strong convexity of the entropy. Some  interesting stories occur in situations where we start decreasing $\sigma^2$ and, reaching some critical value $\sigma_c^2>0$, the uniformity of the LSI vanishes. For instance, for the mean-field symmetric double-well Curie-Weiss model studied in \cite{Dagallier,Monmarchemetastable,Mtoymodel}, there is a critical temperature $\sigma_c^2>0$ such that, for $\sigma^2>\sigma_c^2$, the LSI is uniform in $N$, for $\sigma^2=\sigma_c^2$, $\lambda_N$ is of order $N^{-1/2}$, and for $\sigma^2<\sigma_c^2$, $\lambda_N=\mathcal O(e^{-cN})$ for some $c>0$. This brutal change of behavior when some parameter (here, the temperature) passes through a critical value is called a phase transition. For instance for the mean-field Curie-Weiss model this gives an explanation in simplified settings of the brutal change of magnetic properties of some materials at a given temperature.

Now, for the same Curie-Weiss model, at the same critical temperature, the deterministic mean-field flow~\eqref{eq:EDP_gradient_flow} also undergoes a phase transition: for $\sigma^2>\sigma_c^2$, $\mathcal F$ has a unique global minimiser and satisfies a PŁ inequality; for $\sigma^2=\sigma_c^2$, there is still a unique global minimiser but $\lambda=0$; for $\sigma^2 <\sigma_c^2$, there are two global minimizers and one saddle point.

The fact that the critical temperature is the same for the mean-field flow~\eqref{eq:EDP_gradient_flow} and the $N$-particle system~\eqref{eq:EDP_N_particles} is equivalent to say that, in this model,
\begin{equation}
\label{eq:equivalence}
\lambda>0 \qquad \Leftrightarrow\qquad \liminf_{N\rightarrow \infty} \lambda_N >0\,.
\end{equation}
The question is thus to know if this is actually a general fact. If this equivalence is true, it means that if we design an algorithm in the idealized mean-field form~\eqref{eq:EDP_gradient_flow} (as this is often the case) and it has good convergence properties in the sense that it satisfies a PŁ inequality, then we know that the particle system that we are going to use in practice will also have good convergence properties (for end-to-end guarantees between  feasible time-discrete particle schemes and the minimiser of $\mathcal F$ under a uniform-in-$N$ LSI, we refer to \cite{Suzukietal,MonmarcheSchuh}). This is interesting  since it is  often  a priori simpler to prove a single PŁ inequality (see the criteria in \cite[Sections 2.2 and 3]{monmarche2025local}) than a uniform family of LSI depending on $N$. For instance, PŁ inequalities under a flat-convexity condition were proven in~\cite{chizat,nitanda2022convex} before and with much simpler proofs than uniform LSI under a similar condition~\cite{SongboLSI,Chewietal,kook2024sampling}.

In fact, one of the two implications in~\eqref{eq:equivalence} is known:  it has been observed in \cite{Pavliotis,GuillinWuZhang} that, under suitable general conditions (see Theorem~\ref{thm:unifLSI->PL} below for instance),
\begin{equation}
\label{eq:implication_classique}
\lambda \geqslant \limsup_{N\rightarrow \infty} \lambda_N\,.
\end{equation}
This observation has led Delgadino, Gvalani, Pavliotis and Smith to propose in \cite{Pavliotis} the following conjecture, stronger than~\eqref{eq:equivalence}:
\[\lambda = \lim_{N\rightarrow \infty} \lambda_N\,.\]
However, for now, this has only been proven in very simple toy models, as in \cite[Proposition 4]{Mtoymodel}. Even the weaker equivalence~\eqref{eq:equivalence}, which would be very useful as discussed above, has remained open.

\subsection{Overview of our results and organisation}

First, our main contribution is the proof of the equivalence~\eqref{eq:equivalence} for a general class of models, as stated in Theorem~\ref{thm:final}. The proof involves several intermediary steps of interest for themselves, which are gathered in Section~\ref{sec:intermediaryresults}. We also give a criterion for uniform-in-$N$ LSI under a convexity condition stronger than the PŁ inequality in Theorem~\ref{thm:generalconvexity} (where, with the vocabulary introduced in Section~\ref{sec:parametrized}, the dimension of the parameter can be infinite).

Second,  noticing with Theorem~\ref{thm:final}  that the equivalence~\eqref{eq:equivalence}  requires $\mathcal F$ to have a unique global minimizer, which is not implied by the fact $\lambda>0$, we give  a simple explicit counter-example to~\eqref{eq:equivalence} in Proposition~\ref{prop:counterexample}.

Third, we will conduct a similar study in degenerate cases where the PŁ inequality is replaced by a general Łojasiewicz inequality of the form
\begin{equation}
\label{eq:Łojasiewicz_degenere}
\forall \rho \in \mathcal P_2(E),\qquad \mathcal F^c(\rho) \leqslant \Phi\po \mathcal I(\rho)\pf\,,
\end{equation}
where $\Phi:\R_+\rightarrow \R_+$ is a non-decreasing function. The PŁ inequality corresponds to the linear case $\Phi(r) = \frac{r}{2\lambda}$. Similarly to the optimal PŁ constant~\eqref{eq:deflambdaPL}, the optimal non-decreasing $\Phi$ is the Łojasiewicz profile defined as
\begin{equation}\label{eq:defPhiprofile}
\Phi(r) := \sup\left\{ \mathcal F^c(\rho)\, : \, \mathcal I(\rho)\leqslant r\right\}\,.
\end{equation}
As will be clear, the relevant notion of uniform-in-$N$ general Łojasiewicz LSI in that case is
\begin{equation}
\label{eq:LSI_degenere}
\forall \rho^N \in \mathcal P_2(E^N),\qquad \frac1N \mathcal H(\rho^N|\mu_\infty^N) \leqslant \Phi_N\po \frac1N \mathcal I(\rho^N|\mu_\infty^N)\pf\,,
\end{equation}
with a family of non-decreasing functions $\Phi_N$ with
\[\widetilde{\Phi}(r) := \limsup_{N\rightarrow \infty} \Phi_N(r) < \infty \qquad \forall r\geqslant 0\,.\] 
This reduces to standard uniform-in-$N$ LSI when $\Phi_N$ is linear for all $N\geqslant 1$. The optimal non-decreasing $\Phi_N$ in~\eqref{eq:LSI_degenere} corresponds to~\eqref{eq:defPhiprofile} applied to the gradient flow of $\frac1N \mathcal H(\cdot|\mu_\infty^N)$.

This degenerate situation is interesting in particular since it can occur at phase transitions. For instance, for the mean-field symmetric double-well Curie-Weiss model, as stated in \cite[Theorem 29]{Mtoymodel}, at $\sigma^2=\sigma_c^2$, both these inequalities are satisfies with $\Phi(r)$ and $\widetilde{\Phi}(r)$ of the form $C\min(r,r^{2/3})$ for some constant $C>0$. It provides polynomial convergence rates, uniformly in $N$ for the particle system.

\medskip

The rest of this work is organised as follows. In the next section, we conclude this introduction with a discussion about the most related existing literature. Our main results in the non-degenerate case, including Theorem~\ref{thm:final} for the equivalence~\eqref{eq:equivalence} and Theorem~\ref{thm:generalconvexity} for a uniform LSI criterion under a convexity condition, are gathered in Section~\ref{sec:results}. A counter-example to~\eqref{eq:equivalence} is presented in Section~\ref{sec:contreexemple}, leading to Proposition~\ref{prop:counterexample}. Section~\ref{sec:degenerate} is devoted to the degenerate case, with all results gathered in Theorem~\ref{thm:intermediaryimplications-dege}. The proof for the uniform LSI is then given in Section~\ref{sec:proofLSIN}, while the other proofs are gathered in Section~\ref{sec:otherproofs}.  

\subsection{Related works}

The question of establishing uniform-in-$N$ LSI, which has been for decades the topic of many works in various settings, has recently made substantial progresses in the mean-field case, motivated in particular by the applications for high-dimensional algorithms. Before that, the methods were  mainly restricted to strongly displacement-convex cases (using the Bakry-Emery criterion for LSI) or small perturbations of the i.i.d. case $\mathcal E(\rho) = \int_E V\rho$ (mimicking the proof of LSI tensorization), see \cite{Malrieu,GuillinWuZhang,Pavliotis,CMCV}. This only covered cases with no phase transition or only in the very high temperature regime (far from criticality). The two-scale approach  introduced in~\cite{bauerschmidt2019very} somehow combines the two approaches: the i.i.d. case applies to microscopic scales, while convexity is used for the macroscopic scales where a low-temperature LSI is required (see the definitions in Section~\ref{sec:proofLSIN} for details).

More recently, the flat-convexity of $\mathcal E$ (i.e. convexity of $t\mapsto \mathcal E(t\rho + (1-t)\mu)$, by contrast to displacement convexity which is convexity along Wasserstein geodesics) has emerged as an important condition for this problem. It was first used to prove mean-field PŁ inequalities~\cite{chizat,nitanda2022convex} (i.e. $\lambda>0$) and then uniform LSI under flat-convexity (and some other conditions) was established in \cite{SongboLSI,Chewietal,kook2024sampling}. This is a generalization of the i.i.d. case, since $\rho \mapsto \int_E V\rho$ is linear, hence flat-convex.  The proof in \cite{SongboLSI} can be understood as a  generalisation of the proof for the tensorization property of LSI. This flat-convexity condition is very different from displacement-convexity: for instance, $\rho \mapsto \int_E V\rho$ is displacement-convex if and only $V$ is convex. Many interesting algorithms are flat convex but far from displacement-convex, see~\cite{chizat,nitanda2022convex} and references within. 

In parallel, for low-temperature LSI, the usual strong convexity condition was replaced in \cite{chewi2024ballistic,Monmarchemetastable}  by weaker PL-type conditions.

Then,~\cite{Dagallier,Monmarchemetastable} combined these two type of results inside the two-scale approach of~\cite{bauerschmidt2019very}, relying on flat-convexity for the microscopic scale and a PŁ condition for the macroscopic one. This yields uniform-in-$N$ LSI under more general conditions than previous works, and in particular in \cite{Dagallier} the uniform LSI is proven up to criticality (i.e. for any $\sigma^2>\sigma_c^2$) for the symmetric double-well Curie-Weiss model. In fact the proof of~\cite{Dagallier} shows that the uniform LSI is implied by a PŁ inequality for a suitable coarse-grained free energy. 

Our approach is to show that the coarse-grained PŁ inequality is implied by the mean-field one (i.e. by $\lambda>0$) and then essentially follow~\cite{Dagallier}. We  discuss further the difference between our work and~\cite{Dagallier} in Section~\ref{sec:PL->LSI}.

\section{Results for the inequality equivalence}\label{sec:results}

We will split the equivalence~\eqref{eq:equivalence} as a sequence of intermediary implications. These intermediary steps can be of interest by themselves, and some them are proven under weaker assumptions than the others or with more explicit constants, which also motivate to state them separately.

This section  is thus organised as follows: first, Section~\ref{sec:parametrized} introduces the class of models considered in this work, where the non-flat-convex part of the energy is written in terms of a parameter map $\varphi$ with value in some Hilbert space $\mathbb H$. Given such a parameter map, we can thus introduce in Section~\ref{sec:coarsegrainedIneq} a two-scale decomposition of the problem, with a macroscopic part in $\mathbb H$ and a microscopic part on $E^N$ modulated by the parameter, which allows to state several coarse-grained PŁ and log-Sobolev inequalities. The results relating all these inequalities are provided in Section~\ref{sec:intermediaryresults}. In Section~\ref{sec:finalresult}, these intermediary results are combined to get Theorem~\ref{thm:final}, which establishes the equivalence~\eqref{eq:equivalence} under suitable conditions.  

\medskip

The following basic conditions are \emph{implicitly} enforced in all the work in order to ensure the well-posedness of the objects considered (for instance the fact that $e^{-U_N}\in L^1(E^N)$ for all $N\geqslant 1$ or that $\exp(-\frac{\delta\mathcal E}{\delta m}(\mu,\cdot) \in L^1(E)$ for all $\mu \in \mathcal P_2(E)$) and that $\mathcal F$ is lower-bounded and admits a global minimizer (as in the proof of Lemma~\ref{lem:uniqueminimumtheta}).

\begin{assumption}\label{ass:basic}
For all $\mu\in\mathcal P_2(E)$, the derivatives $\frac{\delta \mathcal E}{\delta m}(\mu,\cdot) $ and $D\mathcal E(\mu,\cdot)$ exists and are continuous and, if $E=\R^d$, there exist $c_\mu,C_\mu>0$ such that $\frac{\delta \mathcal E}{\delta m}(\mu,x) \geqslant c_\mu  |x|^2  - C_\mu$ for all $x\in \R^d$. Moreover, if $E=\R^d$, there exists $c_0,C_0>0$ such that  $\mathcal E(\mu) \geqslant c_0 \int_{\R^d} |x|^2 \mu(\dd x) - C_0$ for all $\mu \in\mathcal P_2(\R^d)$.
\end{assumption}

\subsection{Parametrized energies}\label{sec:parametrized}

We use the notation $\mu(f)  = \int f \dd \mu$. We consider energies of the form
\begin{equation}\label{loc:Eparameter}
    \mathcal E(\mu) = \mu(V) +  \mathcal E_c(\mu) + R\po \mu(\varphi)\pf\,,
\end{equation}
where $V:E\rightarrow \R$, $\mathcal E_c$ is flat-convex (recall that this means that $t\mapsto \mathcal E_c(t\rho+(1-t)\mu)$ is convex for all $\rho,\mu \in\mathcal P_2(E)$) and, for some separable Hilbert space $\mathbb H$,
\begin{equation}
    \label{loc:phiR}
\varphi:E \rightarrow \mathbb H,\qquad R:\mathbb H\rightarrow \R\,.
\end{equation}
 In other words, we decompose the non-flat-convex  part of the energy as the composition of $\mu \mapsto \mu(\varphi)$ (which is linear) and $R$ which is non-linear (but over a Hilbert space, which is somehow more convenient than non-linear functions over the Wasserstein space). We refer to $\mu \mapsto R(\mu(\varphi))$ as a parametrized energy, with parameter  $\mu(\varphi)$ and parameter map $\varphi$.
 
 As an example, for~\eqref{eq:EVW} on $E=\R^d$ with quadratic pairwise interaction  $W(x,y)=\kappa |x-y|^2$,
\[\mathcal E(\mu) = \int_{E} \po V(x) +  \kappa |x|^2 \pf \mu(\dd x) - \kappa\left|\int_{E} x\mu(\dd x)\right|^2,\]
i.e. we simply have $\mathbb H = \R^d$, $\varphi(x) = x$ and $R(y)=-\kappa |y|^2$. For more general pairwise interactions~\eqref{eq:EVW}, the same decomposition in fact holds with a Mercer or a Fourier decomposition of the potential, of the form
\begin{equation}
\label{eq:modes}
W(x,y) = W_0(x) + W_0(y) + \sum_{k \in \N} m_k(x)m_k(y) - \sum_{k\in\N} n_k(x)n_k(y)\,, 
\end{equation}
setting $\mathcal E_c(\mu) = \sum_{j\in\N} \po \mu(m_k)\pf^2 $, $\varphi(x)=(n_k(x))_{k\in\N} \in \ell^2(\N)$ and $R(\psi)=-\|\psi\|_{\ell^2}^2$.

Beyond pairwise interactions, $R$ can be an higher order polynomial to cover $k$-particle interactions with $k>2$, or a more complicated non-linear map over a Hilbert space of functions or a Reproducing Kernel Hilbert space, as in various algorithmic situations~\cite{Szpruch,arbel2019maximum}. A non-quadratic $R$ also arises in the modified energy method of~\cite{Monmarchemetastable}.  In summary, the form~\eqref{loc:Eparameter} is actually very general.

Moreover, assume that the curvature of $R$ is lower-bounded, in the sense that there exists $\kappa\geqslant 0$  such that $R+\kappa|\cdot|^2$ is convex (we write $\theta\cdot \zeta$ and $|\theta|$ the scalar product and norm for $\theta,\zeta\in \mathbb{H}$). Then, up to replacing $\mathcal E_c$ by $\mathcal E_c(\mu) + R(\mu(\varphi)) + \kappa |\mu(\varphi)|^2$ (which is flat-convex) and $\varphi$ by $\sqrt{2\kappa} \varphi$, the energy $\mathcal E$ can be written in the form
\begin{equation}
    \label{eq:energy-quadra}
\mathcal E(\mu) = \mu(V) +  \mathcal E_c(\mu) - \frac12 |\mu(\varphi)|^2\,.
\end{equation}
In other words, instead of considering~\eqref{loc:Eparameter} with a lower bound on the curvature of $R$, there is no loss of generality to restrict to~\eqref{eq:energy-quadra}, which we do in the following.

For some  results, we will need that $\dim \mathbb H < +\infty$, but the infinite dimensional case is interesting (for instance for a decomposition~\eqref{eq:modes} with an infinity of modes $n_k$ in the concave part). For this reason, our analysis will cover as much as possible this situation. In that purpose, we will need the additional condition that $\varphi = \C \psi$ for some $\psi :E \rightarrow \mathbb H$ where $\C$ is a self-adjoint operator on $E$ such that $\C^2$ is trace-class, i.e. there exists a Hilbert basis $(e_i)_{i\in\N}$ of $\mathbb H$ such that $\C\psi = \sum_{i\in\N} r_i (e_i \cdot \psi) e_i$  for some  $(r_i)_{i\in\N} \in \R^{\N}$ with $\sum_{i\in\N} r_i^2 <\infty$. For instance, if $\mathbb H=\ell^2(\N)$ and $\varphi=(\varphi_k)_{k\in\N}$ is such that, for all $x\in E$, $\sum_{k\in\N} k^2 |\varphi_k(x)|^2 <\infty $, then we can define $\psi_k(x) = k \varphi_k(x) $, and then the operator $\C\theta = (\theta_k/k)_{k\in\N}$ is trace-class. In the case where $\dim \mathbb H$ is finite and fixed, then we can keep in mind the simple situation where $\C=I$, $\varphi=\psi$ (although even in this case, taking $\C\neq I$ might be necessary  to get dimension-free estimates).

\subsection{Coarse-grained inequalities}\label{sec:coarsegrainedIneq}

In the rest of this section, we consider that we are given a separable Hilbert space $\mathbb H$, a map $\rp:E\rightarrow \mathbb H$, a   trace-class positive self-adjoint operator $\C^2$ over $\mathbb H$ and set $\varphi=\C \rp$. Although we don't assume it for now, we have in mind that the energy is of the form~\eqref{eq:energy-quadra} (we will explicitly say when this assumption is needed).  

For $\theta \in \mathbb H$, introduce the coarse-grained free energy 
\begin{equation}\label{eq:defFhat}
\hF(\theta) = \inf\left\{\mathcal F(\rho)\ : \ \rho(\rp)=\theta\right\}\,,
\end{equation}
with $\inf\emptyset = +\infty$, and 
\begin{equation}
\label{eq:defomega1}
\hat \omega(\theta) = \inf_{\rho\in \mathcal P_2(E)} \left\{\mathcal F(\rho) + \frac{1}{2}|\rho(\varphi)|^2 - \rho(\psi)\cdot  \theta \right\} = \inf_{m\in\mathbb H } \left\{\hF(m) + \frac12 |\C m|^2-  m \cdot \theta \right\}\,.  
\end{equation}
\re{This is clearly finite for all $\theta\in\mathbb H$}. For $N\geqslant 1$, writing $\gamma_{\C}$ the centered Gaussian measure on $\mathbb H$ with covariance $N^{-1} \C^2$,  the renormalised measure $\tilde \nu_N$ over $\mathbb H$ is \re{defined as
\begin{equation}
\label{eq:deftildenuN}
\tilde \nu_N(\dd \theta) \propto e^{-N\hat \omega (\theta)} \gamma_{\C}(\dd \theta),
\end{equation}
provided $e^{-N \hat \omega} \in L^1(\gamma_{\C})$ (this will be the case under the conditions enforced below, see Lemma~\ref{lem:omegaNtoomega}). When $\dim\mathbb H<\infty$, $\tilde \nu_N \propto e^{-N\omega}$ is the Gibbs measure at temperature $N^{-1}$ with the so-called renormalised potential
\begin{equation}
\label{eq:defomega}
\omega(\theta) = \hat \omega (\theta) + \frac{1}{2}|\C^{-1} \theta|^2 = \inf_{\rho\in \mathcal P_2(E)} \left\{\mathcal F(\rho) + \frac{1}{2}|\rho(\varphi)  -\C^{-1} \theta|^2 \right\},
\end{equation}
for $\theta \in \mathrm{Im}\C$ (otherwise we set $\omega(\theta) = +\infty = |\C^{-1}\theta|^2$). We informally interpret $\tilde \nu_N$ similarly when $\dim\mathbb H=\infty$.  Large values of $N$ thus corresponds to a low-temperature regime for $\tilde \nu_N$. 
 }

\re{To avoid technical discussions on well-posedness, we will regularly work  under the following condition:
\begin{assumption}\label{assum:finitedim}
The dimension of $\mathbb H$ is finite and $\C$ is non-singular.
\end{assumption}
\begin{remark}\label{rem:diminfini}
Once we assume $\dim \mathbb H<\infty$, assuming that $\C$ is non-singular is not restrictive since in the form~\eqref{eq:energy-quadra} with $\varphi=\C\psi$ we can always reduce the Hilbert space to the image of $\C$. In order to get results when $\dim \mathbb H=+\infty$, it is often sufficient to apply the finite-dimensional results to the case where $\varphi$ is replaced by $\Pi_n\varphi$ where $\Pi_n$ is a projection into a $n$-dimensional subspace and then let $n\rightarrow \infty$. However, to do so, it is crucial that all the estimates we get only depend  on dimension-free parameters. In particular, the trace-class assumption on $\C^2$ is crucial to get  that $\sup_n \mathrm{Tr}(\Pi_n^* \C^2 \Pi_n) \leqslant \mathrm{Tr}(\C^2)<\infty$. Moreover, when taking limits as $N\rightarrow \infty$ as in~\eqref{eq:equivalence}, we have to make sure that this commutes with the limit $n\rightarrow\infty$. For instance the result in~\cite{chewi2024ballistic} gives a result as $N\rightarrow \infty$ but the error terms explicitly depends on the ambient dimension and thus we cannot rely on it in the infinite dimensional case; which is the reason why, eventually, Theorem~\ref{thm:final} is restricted to the finite-dimensional case.
\end{remark}
}

\medskip

The  links between $\mathcal F$, $\hF$ and $\omega$ are clarified in Lemma~\ref{lem:omegaFhat}. For $\theta\in\mathbb H$ and $\rho\in\mathcal P_2(E)$, we write
\begin{equation}
\label{eq:defFtheta}
\mathcal F_\theta(\rho) = \mathcal F(\rho) + \frac12|\rho(\varphi)|^2 - \re{\rho(\rp) \cdot \theta}\,.
\end{equation}
Consider the following conditions:

\begin{assumption}\label{assum:baseLemme1}
For all $\theta \in \mathbb H$, $  \mathcal F_\theta $
admits on $\mathcal P_2(E)$ a unique global minimiser $\bar\mu_\theta \in \mathcal P_2(E)$. Moreover, writing $m_\theta = \bar\mu_\theta(\rp)$,
\begin{equation}
\label{eq:regularitymutheta}
\po \theta \mapsto \mathcal F(\bar\mu_\theta) \pf \in \mathcal C^1(\mathbb H,\R),\qquad \po\theta \mapsto m_\theta\pf\in\mathcal C^1(\mathbb H,\mathbb H)\,.
\end{equation}
\end{assumption}

A simple condition to check that $  \mathcal F_\theta $ has a unique global minimiser for all $\theta$ is given in Lemma~\ref{lem:uniqueminimumtheta}. The regularity condition~\eqref{eq:regularitymutheta} is further discussed in Remark~\ref{rem:regularityPDE}. 

Under Assumption~\ref{assum:baseLemme1}, Lemma~\ref{lem:omegaFhat} shows that  $\hF$ is $\mathcal C^1$ over $\mathcal M:=\{m_\theta,\ \theta\in \mathbb H\}$ and $\omega$ is $\mathcal C^1$ over $\mathbb H$.  We are interested in the relations between the following functional inequalities:

\begin{enumerate}
\item PŁ inequality for the free energy $\mathcal F$:  there exists $\lambda>0$ such that 
\begin{equation}\label{eq:PLF}
\forall \rho \in \mathcal P_2(E)\,,\qquad \mathcal F(\rho)-\inf \mathcal F \leqslant \frac{1}{2\lambda}\mathcal I(\rho)\,.
\end{equation}
\item PŁ inequality over $\mathcal M$ for the coarse-grained free energy $\hF$: there exists $\lambda'>0$ such that
\begin{equation}
\label{eq:PLFhat}
\forall \theta \in \mathbb H,\qquad \hF(m_\theta) - \inf\hF \leqslant \frac{1}{2\lambda'} \left|\C^{-1}\na \hF(m_\theta)\right|^2\,.
\end{equation}
\item PŁ inequality for the renormalised potential $\omega$: there exists $\lambda''>0$ such that 
\begin{equation}
\label{eq:PLomega}
\forall \theta \in \mathbb H,\qquad \omega(\theta) - \inf \omega \leqslant \frac{1}{2\lambda''} |\C\na \omega(\theta)|^2\,.
\end{equation}
\item Ballistic low-temperature LSI for the renormalised measures $(\tilde \nu_N)_{N\geqslant 1}$: for all $N\geqslant 1$, there exists $\tilde \lambda_N> 0$ such that 
\begin{equation}\label{eq:ballisticLSI}
\forall \rho \in \mathcal P_2(\mathbb H),\qquad  \mathcal H(\rho|\tilde \nu_N) \leqslant \frac{1}{2\tilde \lambda_N}\mathcal I_{\C}(\rho|\tilde \nu_N)\,,
\end{equation}
where $\mathcal I_{\C} (\rho|\mu) = \int_{\mathbb H}|\C\na \ln(\rho/\mu)|^2 \rho$, with
\begin{equation}
\label{eq:ballisticlambda_N}
\liminf_{N\rightarrow \infty} \frac{\tilde \lambda_N}{N} >0\,.
\end{equation}
\item Uniform-in-$N$ LSI for the Gibbs measures $(\mu_\infty^N)_{N\geqslant 1}$: for all $N\geqslant 1$, there exists $\lambda_N> 0$ such that 
\begin{equation}\label{eq:unifLSI}
\forall \rho^N \in \mathcal P_2(\R^{dN}),\qquad  \mathcal H(\rho^N|\mu_\infty^N) \leqslant \frac{1}{2\lambda_N}\mathcal I(\rho^N|\mu_\infty^N)\,,
\end{equation}
and $\liminf_{N\rightarrow \infty} \lambda_N >0$.
\end{enumerate}

Our goal is to prove that, under suitable conditions, these inequalities are all equivalent. More precisely, roughly speaking, we will prove that, under additional assumptions,
\begin{equation}
\label{eq:chaineimplications}
\eqref{eq:PLF}\quad \Rightarrow \quad \eqref{eq:PLFhat} \quad \Leftrightarrow \quad  \eqref{eq:PLomega} \quad \Leftrightarrow \quad  \eqref{eq:ballisticLSI}+\eqref{eq:ballisticlambda_N}  \quad \Rightarrow \quad  \eqref{eq:unifLSI}+\eqref{eq:LSI-unifN} \quad \Rightarrow \quad  \eqref{eq:PLF}.
\end{equation}
The additional conditions required will not be the same for each of these implications, and some relations between the various constants $\lambda,\lambda',\lambda'',\tilde \lambda_N$ and $\lambda_N$ will be made precise.

Among all these implications, let us clarify what is already known and what is easy:
\begin{itemize}
\item With Lemma~\ref{lem:omegaFhat} at hand, the implications $\eqref{eq:PLF} \Rightarrow  \eqref{eq:PLFhat}   \Leftrightarrow   \eqref{eq:PLomega} $ are elementary. The corresponding precise statements are gathered in Proposition~\ref{prop:intermediaryimplications}.
\item When $\dim \mathbb H < \infty$, the equivalence $\eqref{eq:PLomega}   \Leftrightarrow    \eqref{eq:ballisticLSI}+\eqref{eq:ballisticlambda_N}$ is precisely the result of Chewi and Stromme in  \cite{chewi2024ballistic} (recalled in Theorem~\ref{thm:ChewiStromme} below), in fairly general settings. We also mention \cite[Lemma 6]{Monmarchemetastable} which establishes a ballistic LSI for $\tilde \nu_N$ under a slightly different condition that the PŁ inequality~\eqref{eq:PLomega} (still in the case $\dim \mathbb H < \infty$, with dimension dependencies in the constants).
\item The implication $\eqref{eq:ballisticLSI}+\eqref{eq:ballisticlambda_N}    \Rightarrow    \eqref{eq:unifLSI}+\eqref{eq:LSI-unifN}$ is what is obtained with the method of~\cite{Dagallier}. Since this work is restricted to pairwise interactions energies of the form~\eqref{eq:EVW}, for the sake of completeness we adapt the proof of~\cite{Dagallier} to the case~\eqref{eq:energy-quadra}, leading to Theorem~\ref{thm:PL->unifLSI} (besides, we clarify some points related to the case $\dim \mathbb H=\infty$).
\item As discussed in the introduction, the implication $\eqref{eq:unifLSI}+\eqref{eq:LSI-unifN}\Rightarrow \eqref{eq:PLF}  $ has been established for pairwise interaction in \cite{Pavliotis,GuillinWuZhang}. We state a result for the general form~\eqref{eq:energy-quadra} in Theorem~\ref{thm:unifLSI->PL}.
\end{itemize}

The precise statements are provided in the next section.

\subsection{Intermediary results}\label{sec:intermediaryresults}

\subsubsection{Direct implications}

Among~\eqref{eq:chaineimplications}, the easy implications are the following:

\begin{proposition}\label{prop:intermediaryimplications}
Under Assumptions~\ref{assum:finitedim} and \ref{assum:baseLemme1}, the following holds:
\begin{enumerate}
\item If $\varphi$ is $\mathcal C^1$, Lipschitz continuous and not constant,  the PŁ inequality~\eqref{eq:PLF} for $\mathcal F$ implies the PŁ inequality~\eqref{eq:PLFhat} for $\hF$ with $\lambda' = \lambda/\|\na \varphi\|_\infty^2$.
\item   The PŁ inequality~\eqref{eq:PLFhat} for $\hF$ implies the PŁ inequality~\eqref{eq:PLomega} for $\omega$ with $\frac1{\lambda''} = 1+\frac1{\lambda'}$.
\item   If $\omega$ satisfies the PŁ inequality~\eqref{eq:PLomega} and is not constant, then necessarily $\lambda''<1$ and the PŁ inequality~\eqref{eq:PLFhat} holds with $\frac{1}{\lambda'} = \frac{1}{\lambda''}-1$.
\end{enumerate}
\end{proposition}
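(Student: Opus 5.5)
The whole proof rests on identifying the gradients of $\omega$ and $\hF$ at corresponding points, which is what Lemma~\ref{lem:omegaFhat} provides. From the second expression in~\eqref{eq:defomega1}, under Assumption~\ref{assum:finitedim},
\[\omega(\theta)=\inf_{m\in\mathbb H}\Bigl\{\hF(m)+\tfrac12|\C m-\C^{-1}\theta|^2\Bigr\},\]
and under Assumption~\ref{assum:baseLemme1} the infimum is attained at $m=m_\theta$. I will use the following facts, which I expect to be exactly the content of Lemma~\ref{lem:omegaFhat}:
\begin{itemize}
\item[(a)] $\hF(m_\theta)=\mathcal F(\bmu)$, since $\bmu$ minimises $\mathcal F$ under the constraint $\rho(\psi)=m_\theta$.
\item[(b)] The first-order condition reads $\na\hF(m_\theta)=\theta-\C^2m_\theta$.
\item[(c)] The envelope theorem gives $\na\omega(\theta)=\C^{-2}\theta-m_\theta$.
\end{itemize}
Consequently
\[\C\na\omega(\theta)=\C^{-1}\theta-\C m_\theta=\C^{-1}\na\hF(m_\theta)=:g_\theta,\qquad \omega(\theta)=\hF(m_\theta)+\tfrac12|g_\theta|^2.\]
Finally, $\inf\omega=\inf\hF=\inf\mathcal F$. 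Indeed, $\omega\geqslant\inf\hF$ is clear. Conversely, $\omega(\C^2m)\leqslant\hF(m)$, and the minimiser of $\mathcal F$ (Assumption~\ref{ass:basic}) equals $\bmu$ for the appropriate $\theta$, so the infimum of $\hF$ is reached on $\mathcal M$.

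For item 2, I subtract the infimum in the identity $\omega(\theta)=\hF(m_\theta)+\frac12|g_\theta|^2$ and apply~\eqref{eq:PLFhat}:
\[\omega(\theta)-\inf\omega\leqslant\Bigl(\tfrac1{2\lambda'}+\tfrac12\Bigr)|g_\theta|^2,\]
which is~\eqref{eq:PLomega} with $1/\lambda''=1+1/\lambda'$.

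For item 3 I reverse the computation:
\[\hF(m_\theta)-\inf\hF=\omega(\theta)-\inf\omega-\tfrac12|g_\theta|^2\leqslant\tfrac12\Bigl(\tfrac1{\lambda''}-1\Bigr)|g_\theta|^2.\]
It remains to exclude $\lambda''\geqslant1$ when $\omega$ is not constant. In that case the right-hand side is $\leqslant0$, so every $m_\theta$ is a global minimiser of $\hF$. A global minimiser of a function that is differentiable there (as provided by Lemma~\ref{lem:omegaFhat}) has zero gradient, so by (b) we get $g_\theta=0$ for all $\theta$. Hence $\na\omega\equiv0$ and $\omega$ is constant, a contradiction. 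This is the one delicate point: I need Lemma~\ref{lem:omegaFhat} to give genuine differentiability of $\hF$ at $m_\theta$ as a function on $\mathbb H$, not merely along $\mathcal M$. If only the latter is available, I would instead argue directly: $m_\theta$ minimises both $\hF$ and $\hF+\frac12|\C\cdot-\C^{-1}\theta|^2$, then perturb $m$ along a path in $\mathcal M$ to force $\C m_\theta=\C^{-1}\theta$.

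For item 1, I apply~\eqref{eq:PLF} to $\rho=\bmu$, so by (a) the left-hand side equals $\hF(m_\theta)-\inf\hF$. To bound $\mathcal I(\bmu)$, I use the Euler–Lagrange equation for the minimiser of $\mathcal F_\theta$:
\[\na\ln\bmu+D\mathcal E(\bmu,\cdot)+\na\varphi^{T}\bmu(\varphi)-\na\psi^T\theta=0.\]
Since $\bmu(\varphi)=\C m_\theta$ and $\na\varphi=\C\na\psi$, this gives
\[\na\ln\bmu+D\mathcal E(\bmu,\cdot)=\na\psi^T(\theta-\C^2m_\theta)=\na\varphi^T g_\theta.\]
Therefore $\mathcal I(\bmu)\leqslant\|\na\varphi\|_\infty^2|g_\theta|^2$, which yields~\eqref{eq:PLFhat} with $\lambda'=\lambda/\|\na\varphi\|_\infty^2$. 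The assumption that $\varphi$ is non-constant guarantees $\|\na\varphi\|_\infty>0$, and the Lipschitz assumption guarantees $\|\na\varphi\|_\infty<\infty$.
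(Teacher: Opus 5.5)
Your proof is correct and follows the paper's argument: the identities $\C\nabla\omega(\theta)=\C^{-1}\nabla\hF(m_\theta)$ and $\omega(\theta)=\hF(m_\theta)+\frac12|\C m_\theta-\C^{-1}\theta|^2$ from Lemma~\ref{lem:omegaFhat} give items 2 and 3, and the self-consistency equation at $\rho=\bmu$ gives item 1. Your extra step excluding $\lambda''\geqslant 1$ (every $m_\theta$ would then be a global minimiser and hence a critical point of $\hF$, forcing $\nabla\omega\equiv 0$) is more careful than the paper, whose one-line argument only yields $\lambda''\leqslant 1$. As you note, this step uses differentiability of $\hF$ at $m_\theta$ on all of $\mathbb H$, which the lemma's first-order condition $\nabla\hF(m_\theta)=\theta-\C^2m_\theta$ already presupposes.
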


The proof is postponed to Section~\ref{sec:proof-intermediary}. The restriction to the case $\dim \mathbb H < \infty$ is for commodity: since the constants are fully explicit and dimension-free, following Remark~\ref{rem:diminfini} we get the same result in the case $\dim\mathbb = \infty$ for a suitable sense of the PŁ inequalities~\eqref{eq:PLFhat} and~\eqref{eq:PLomega} (notice that to get dimension-free constants in~\eqref{prop:intermediaryimplications} it was crucial to involve $\C$ in the definition of the inequalities~\eqref{eq:PLFhat} and~\eqref{eq:PLomega}).

\subsubsection{Finite-dimensional PL and minimiser uniqueness imply ballistic LSI}

For $\eqref{eq:PLomega}   \Leftrightarrow    \eqref{eq:ballisticLSI}+\eqref{eq:ballisticlambda_N}$, we simply recall the result of Chewi and Stromme: 

\begin{theorem}[from {\cite[Theorem 1]{chewi2024ballistic}}]\label{thm:ChewiStromme}
Under Assumption~\ref{assum:finitedim}, suppose furthermore that $\omega\in \mathcal C^2(\mathbb H)$ with $\Delta \omega \leqslant L (1+|\na \omega|^2)$ for some $L>0$, and that $\omega$ admits a unique global minimizer and satisfies the PŁ inequality~\eqref{eq:PLomega}. Then, for all $N\geqslant 1$, $\tilde \nu_N$ satisfies the LSI~\eqref{eq:ballisticLSI} with a constant $\tilde \nu_N$ such that
\begin{equation}
\label{loc:zfgldazmlmf}
 \frac{\tilde \lambda_N}{N} \underset{N\rightarrow \infty}\longrightarrow \lambda''\,. 
\end{equation}
\end{theorem}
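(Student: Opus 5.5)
This result is recalled from \cite{chewi2024ballistic}, so the plan is mostly a reduction to their setting plus a sketch of their mechanism. The reduction uses the change of variables $\theta = \C\eta$. Since $\C$ is non-singular under Assumption~\ref{assum:finitedim}, $\tilde\nu_N\propto e^{-N\omega}$ is pushed to the measure $\propto e^{-N\omega(\C\eta)}$ on $\mathbb H$. The weighted Fisher information $\mathcal I_{\C}$ becomes the standard Fisher information, because $\na_\eta[f(\C\eta)] = \C\na f$. The PŁ inequality~\eqref{eq:PLomega} for $\omega$ becomes the standard PŁ inequality with constant $\lambda''$ for $V:=\omega\circ\C$. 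The Laplacian condition transfers up to constants depending on $\|\C\|$ and $\|\C^{-1}\|$, which is harmless since the dimension is fixed. Hence it suffices to prove the following. If $V\in\mathcal C^2(\R^n)$ has a unique minimiser $x_*$, satisfies PŁ with constant $\lambda''$, and satisfies $\Delta V\leqslant L(1+|\na V|^2)$, then $e^{-NV}$ satisfies an LSI with constant $\lambda_N^V$ such that $\lambda_N^V/N\to\lambda''$.

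The upper bound $\limsup \lambda_N^V/N\leqslant\lambda''$ I would obtain by testing the LSI on measures concentrated near a point $x$ where $V(x)-\inf V\approx |\na V(x)|^2/(2\lambda'')$. For instance, take $\rho\propto e^{-NV+N\,\na V(x)\cdot(\cdot)}$ suitably localized; at low temperature the entropy and Fisher terms scale as $N(V(x)-\inf V)$ and $N^2|\na V|^2$ to leading order. The conclusion then follows directly from the definition of $\lambda''$ as the optimal PŁ constant.

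For the lower bound I would follow the structure of Chewi–Stromme, in three steps. First, PŁ together with uniqueness of the minimiser gives quadratic growth $V(x)-V(x_*)\geqslant \frac{\lambda''}{2}|x-x_*|^2$ (at least locally, and a growth bound globally). Consequently $e^{-NV}$ concentrates at scale $N^{-1/2}$ around $x_*$. Second, near $x_*$ the PŁ inequality forces $\na^2V(x_*)\succeq\lambda''$, so on a small ball Bakry–Émery gives LSI constant $\approx N\lambda''$. Third, away from the ball I would use a Lyapunov function $W=e^{cN(V-\inf V)}$. The generator $\mathcal L=\Delta-N\na V\cdot\na$ applied to it gives $\mathcal LW/W = cN\Delta V+(c^2-c)N^2|\na V|^2$. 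Using $\Delta V\leqslant L(1+|\na V|^2)$ and PŁ, this is bounded by $-c'N^2\lambda''(V-\inf V)$ outside the ball. This Lyapunov estimate combined with the local LSI (Cattiaux–Guillin–Wu type) yields a global LSI of order $N$, and the constant tends to $\lambda''$ when the ball radius and $c$ are optimized.

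The main obstacle is getting the sharp constant, i.e. the exact limit $\lambda''$ rather than merely order $N$. This requires a careful interpolation argument: a ballistic construction, as in the title of \cite{chewi2024ballistic}, following the gradient flow of $V$ and using PŁ along the trajectory. Here I would simply invoke \cite[Theorem 1]{chewi2024ballistic} after the reduction, checking only that its hypotheses match.
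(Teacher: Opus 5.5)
Your reduction is exactly the paper's argument. The paper just recalls \cite[Theorem 1]{chewi2024ballistic} and applies it to the preconditioned potential $\eta\mapsto\omega(\C\eta)$. Its Gibbs measure is mapped onto $\tilde\nu_N$ by $\eta\mapsto\C\eta$, with relative entropy and $\mathcal I_{\C}$ transported exactly to the standard ones. So your heuristic sketches of the upper and lower bounds are not needed.

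One justification should be corrected. The Laplacian condition does not transfer to $\omega\circ\C$ merely through $\|\C\|$ and $\|\C^{-1}\|$, because $\Delta(\omega\circ\C)(\eta)=\mathrm{Tr}(\C^2\na^2\omega(\C\eta))$ is not controlled by $\Delta\omega(\C\eta)$ when $\na^2\omega$ is indefinite. It does hold here for a different reason: $\hat\omega$ is concave, being an infimum of affine functions of $\theta$ in~\eqref{eq:defomega1}. Hence $\na^2(\omega\circ\C)\preceq I$, and so $\Delta(\omega\circ\C)\leqslant\dim\mathbb H$.
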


Some comments:
\begin{itemize}
\item Notice that \cite[Theorem 1]{chewi2024ballistic} is a priori restricted to the case $\C=I$, but is suffices to apply it to the preconditioned potential $\theta \mapsto \omega(\C \theta)$ and then use that $\tilde \nu_N$ is the image of the corresponding preconditioned Gibbs measure by $\theta \mapsto \C \theta$ to get Theorem~\ref{thm:ChewiStromme} in this form. 
\item The bound on $\Delta \omega$ required in the theorem can be obtained as in Lemma~\ref{lem:regularitycheck} below.
\item The condition that $\omega$ admits a unique minimizer is necessary, cf. the example in Section~\ref{sec:contreexemple}. Otherwise, $\tilde \lambda_N$ is of order $1$ as $N\rightarrow \infty$, with $\tilde \lambda_N^{-1}$ corresponding to the time-scale of a diffusive behavior in the flat set of the global minimizers.
\item As already mentioned in Remark~\ref{rem:diminfini}, contrary to Proposition~\ref{prop:intermediaryimplications}, here the restriction to the case $\dim \mathbb H <\infty$ is important since the error terms in the convergence~\eqref{loc:zfgldazmlmf} depends on the dimension (they are of order $\dim \mathbb H$). In particular if we follow the proof of \cite[Theorem 1]{chewi2024ballistic} to get an explicit rank $N_0$ such that $\tilde \lambda_N \geqslant \lambda''N/2$ for all $N\geqslant N_0$ then this $N_0$ depends on $\dim\mathbb H$.
\end{itemize}

\subsubsection{Uniform LSI implies non-linear PŁ}

 The fact that a uniform LSI~\eqref{eq:unifLSI}+\eqref{eq:LSI-unifN} implies a non-linear PŁ inequality~\eqref{eq:PLF} (with $\lambda\geqslant \bar \lambda := \limsup_{N\rightarrow \infty}\lambda_N$) has been established under various settings, for instance \cite[Theorem 3.3]{Pavliotis}, \cite[Theorem 10]{GuillinWuZhang} or \cite[Theorem 11]{Monmarchemetastable}. The proof is based on the fact that, under suitable conditions and for  $\rho$ in a suitable class of  probability densities over $E$,
 \begin{equation}
 \label{eq:grandeDev}
 \frac1N   \mathcal H(\rho^{\otimes N}|\mu_\infty^N) \underset{N\rightarrow\infty}\longrightarrow \mathcal F(\rho) - \inf \mathcal F\,, \qquad \frac1N \mathcal I(\rho^{\otimes N}|\mu_\infty^N) \underset{N\rightarrow\infty }\longrightarrow \mathcal I(\rho) \,.
 \end{equation}
 In particular, applying~\eqref{eq:unifLSI} with $\rho^N=\rho^{\otimes N}$, dividing by $N$ and passing to the limit gives~\eqref{eq:PLF}. Moreover, as in \cite[Theorem 3.6]{Pavliotis}, the uniform LSI~\eqref{eq:unifLSI} implies that $\mathcal F$ admits a unique critical point, which is its global minimiser, and which is the limit of the one-particle marginal distribution under $\mu_\infty^N$. Indeed, the PŁ inequality~\eqref{eq:PLF} implies that all critical points are global minimisers and, using the Talagrand inequality implied by the LSI~\cite{OttoVillani}, if $\rho_*$ is such a minimisers, denoting by $\mu_\infty^{N,1}(x_1) = \int_{E^{N-1}} \mu_\infty^N(\bx)\dd x_2\dots\dd x_N$ the one-particle marginal of the Gibbs measure, by exchangeability of the latter,
\begin{align}
\mathcal W_2^2(\rho_*,\mu_\infty^{N,1}) & \leqslant \frac1N \mathcal W_2^2(\rho_*^{\otimes N},\mu_\infty^N) \nonumber \\
& \leqslant \frac{2}{N \lambda_N}   \mathcal H(\rho_*^{\otimes N}|\mu_\infty^N)   \nonumber\\
& \leqslant \frac{2}{ \bar \lambda-o(1)} \po \mathcal F(\rho_*) - \inf \mathcal F +o(1) \pf \underset{N\rightarrow \infty}\longrightarrow 0.\label{loc:argumentuniqueness}
\end{align}
In particular, the minimiser is unique.

Since previous works mostly focused on models with pairwise interaction, for the sake of completeness we now give general conditions under which the discussion above can be made rigorous.

\begin{assumption}
\label{assum:unifLSI->PL}
The energy $\mathcal E$ is semi-lower continuous (with respect to weak convergence). For any $\mu \in \mathcal P_2(\R^d)$, $D\mathcal E(\mu,\cdot)$ exists, is  continuous, and such that
\begin{equation}
\label{eq:CVDE}
\int_{E^N} |D\mathcal E(\pi_{\bx},x_1) - D\mathcal E(\mu,x_1)|^2 \mu^{\otimes N}(\dd \bx) \underset{N\rightarrow\infty}\longrightarrow 0\,.
\end{equation}
Moreover, for all $\nu\in\mathcal P_2(\R^d)$, the measure $\mu \propto \exp(-\frac{\delta \mathcal E}{\delta m}(\nu,\cdot))$ is such that
\begin{equation}
\label{eq:CVE}
\int_{E^N} \mathcal E(\pi_{\bx}) \mu^{\otimes N}(\dd \bx) \underset{N\rightarrow\infty}\longrightarrow \mathcal E(\mu)\,.
\end{equation}
\end{assumption}

\begin{example}
For 
\[\mathcal E(\mu) = \int_{E} V \dd \mu + \frac12 \int_{E^2} W(x,y) \mu(\dd x)\mu(\dd y)\,,\]
assuming that $W$ is symmetric ($W(x,y)=W(y,x)$) and $\mathcal C^1$ and, if $E=\R^d$, that there exists $C>0$ such that $|\na_x W(x,y)| \leqslant C(|x|+|y|+1)  $ for all $x,y\in\R^d$,   then $D\mathcal E(\mu,x) = \int_{\R^d} \na_x W(x,y) \mu(\dd y)$ and
\begin{multline*}
\int_{E^N} |D\mathcal E(\pi_{\bx},x_1) - D\mathcal E(\mu,x_1)|^2 \mu^{\otimes N}(\dd \bx) 
= \frac1{N^2} \int_{E} \left|\na_x W(x_1,x_1) - D\mathcal E(\mu,x_1)\right|^2 \mu(\dd x_1)\\ 
+ \frac{N-1}{N^2} \int_{E^2} \left|\na_x W(x_1,x_2) - D\mathcal E(\mu,x_1)\right|^2\mu(\dd x_1)\mu(\dd x_2) \underset{N\rightarrow\infty}\longrightarrow 0\,,
\end{multline*}
for all $\mu \in \mathcal P_2(E)$. Similarly,
\begin{align*}
\int_{E^N} \mathcal E(\pi_{\bx}) \mu^{\otimes N}(\dd \bx) &  = \int_{E} V \dd \mu + \frac{1}{2N} \int_{E} W(x,x)\mu(\dd x ) +  \po \frac{1}{2}-\frac{1}{2N}\pf \int_{E^2} W(x,y)\mu(\dd x)\mu(\dd y) 
\end{align*}
converges to $\mathcal E(\mu)$ for all $\mu \in \mathcal P_2(E)$ as $N\rightarrow\infty$, a fortiori for all  $\mu \propto \exp(-\frac{\delta \mathcal E}{\delta m}(\nu,\cdot))$ with $\nu\in\mathcal P_2(E)$ provided $\mu \in \mathcal P_2(E)$, which is easily checked when $E=\R^d$ under suitable growth conditions on $V$ with respect to $W$ since $\frac{\delta \mathcal E}{\delta m}(\nu,x) = V(x) + \int_{\R^d} W(x,y)\nu(\dd y)$.
\end{example}

\begin{theorem}\label{thm:unifLSI->PL}
Under Assumption~\ref{assum:unifLSI->PL}, if for each $N\geqslant 1$ the Gibbs measure $\mu_\infty^N$ satisfies a LSI~\eqref{eq:unifLSI} and $\bar \lambda := \limsup_{N\rightarrow \infty}\lambda_N >0$, then $\mathcal F$ admits a unique critical point $\mu_*\in\mathcal P_2(E)$, which is its global minimiser and is the limit in $\mathcal P_2(E)$ of $\mu_\infty^{N,1}$. Moreover, the PŁ inequality~\eqref{eq:PLF} holds with $\lambda \geqslant \bar \lambda$.
\end{theorem}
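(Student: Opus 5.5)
The plan is to test the uniform LSI~\eqref{eq:unifLSI} against product measures $\rho^{\otimes N}$, prove the two limits in~\eqref{eq:grandeDev}, and pass to the limit. Uniqueness of the minimiser then follows from the Talagrand argument~\eqref{loc:argumentuniqueness}.

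\textbf{Step 1: restriction to a good class of $\rho$.} I first restrict to measures of the form $\rho\propto \exp(-\frac{\delta\mathcal E}{\delta m}(\nu,\cdot)+ g)$, or more simply to $\rho$ with smooth positive density, finite Fisher information and finite $\mathcal F(\rho)$. For the entropy, exchangeability and $U_N(\bx)=N\mathcal E(\pi_{\bx})$ give
\[\frac1N \mathcal H(\rho^{\otimes N}|\mu_\infty^N) = \mathcal H(\rho) + \int_{E^N} \mathcal E(\pi_{\bx})\rho^{\otimes N}(\dd\bx) + \frac1N \ln Z_N\,.\]
The middle term converges to $\mathcal E(\rho)$ by~\eqref{eq:CVE}, which is where I need $\rho$ of the Gibbs-type form allowed in Assumption~\ref{assum:unifLSI->PL}. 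It remains to show that $\frac1N\ln Z_N\to -\inf\mathcal F$.
\begin{itemize}
\item Upper bound: apply the Gibbs variational principle $-\ln Z_N\le \int U_N\,\rho^{\otimes N}+\mathcal H(\rho^{\otimes N})$ with $\rho=\mu_*$, after approximating $\mu_*$ by admissible $\rho$. This gives $\limsup\frac1N\ln Z_N\ge -\inf \mathcal F$; more precisely, it yields $\frac1N\mathcal H(\rho^{\otimes N}|\mu_\infty^N)\ge0$ together with an upper bound.
\item Lower bound: use the uniform LSI itself. Since $\mathcal H\ge0$, this gives $\liminf \frac1N\mathcal H(\rho^{\otimes N}|\mu^N_\infty)\ge \mathcal F(\rho)+\lim\frac1N\ln Z_N$, and a sub-additivity / large deviation argument using lower semicontinuity of $\mathcal E$ should identify the limit of $\frac1N \ln Z_N$ as $-\inf\mathcal F$.
\end{itemize}
In practice I would avoid needing the exact value. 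The LSI inequality, applied to every admissible $\rho$, gives $\mathcal F(\rho)+c\le \frac{1}{2\bar\lambda}\mathcal I(\rho)$ with $c=\lim\frac1N\ln Z_N$ along a subsequence where $\lambda_N\to\bar\lambda$. Nonnegativity of relative entropy gives $\mathcal F(\rho)+c\ge0$ for all $\rho$. Taking $\rho$ to be (an approximation of) a critical point, with $\mathcal I=0$, forces $\mathcal F(\rho)=-c=\inf\mathcal F$.

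\textbf{Step 2: Fisher information.} I compute
\[\frac1N\mathcal I(\rho^{\otimes N}|\mu_\infty^N)=\int |\na\ln\rho(x_1)+D\mathcal E(\pi_{\bx},x_1)|^2\rho^{\otimes N}(\dd\bx)\,,\]
using exchangeability and $\na_{x_i}U_N=D\mathcal E(\pi_{\bx},x_i)$. By~\eqref{eq:CVDE} and the triangle inequality in $L^2$, this converges to $\mathcal I(\rho)$. Combining both limits then yields~\eqref{eq:PLF} with $\lambda\ge\bar\lambda$ on the admissible class.

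\textbf{Step 3: extension and uniqueness.} I extend the inequality to all $\rho\in\mathcal P_2(E)$ by density, using lower semicontinuity of $\mathcal F$ together with an approximation keeping $\mathcal I$ controlled. For example, one can take the gradient flow at small times, or use that any $\rho$ with $\mathcal I(\rho)<\infty$ can be written as $\rho\propto\exp(-\frac{\delta\mathcal E}{\delta m}(\rho,\cdot)+g)$. PŁ then forces every critical point to be a global minimiser. Existence of a minimiser comes from Assumption~\ref{ass:basic}, and it is of the Gibbs form $\mu_*\propto\exp(-\frac{\delta\mathcal E}{\delta m}(\mu_*,\cdot))$, so~\eqref{eq:CVE} applies to it directly. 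Running~\eqref{loc:argumentuniqueness} along a subsequence with $\lambda_N\to\bar\lambda$ gives $\mathcal W_2(\mu_*,\mu_\infty^{N,1})\to0$ for every minimiser, hence uniqueness. This convergence holds along that subsequence; for the full sequence, every subsequential limit equals $\mu_*$.

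\textbf{Main obstacle.} I expect the hardest step to be controlling the normalisation $\frac1N\ln Z_N$ and the approximation argument extending from the admissible class to all of $\mathcal P_2(E)$. The Gibbs-type restriction in~\eqref{eq:CVE} is tailored to exactly this.
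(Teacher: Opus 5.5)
There is a genuine gap in how you pass from your admissible class to a general $\rho\in\mathcal P_2(E)$. Assumption~\ref{assum:unifLSI->PL} grants \eqref{eq:CVE} only for measures of the exact form $\mu\propto\exp(-\frac{\delta\mathcal E}{\delta m}(\nu,\cdot))$ with $\nu\in\mathcal P_2(E)$. It does not cover an added perturbation $g$, nor arbitrary smooth positive densities. This class can be very small. For a linear energy $\mathcal E(\mu)=\mu(V)$ it is the single measure $e^{-V}/\int e^{-V}$, and for the toy model~\eqref{eq:Etoymodel} it is a one-parameter family of Gaussians $\mathcal N(a,1)$. So Step~1 only gives the PŁ inequality on a set that is in general far from dense, and the density argument of Step~3 cannot repair this. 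That argument would also need $\limsup_k\mathcal I(\rho_k)\leqslant\mathcal I(\rho)$ along the approximation, which small-time gradient flow does not provide. Writing $\rho\propto\exp(-\frac{\delta\mathcal E}{\delta m}(\rho,\cdot)+g)$ does not help either: it is a tautology and does not place $\rho$ in the class where \eqref{eq:CVE} holds.

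The missing observation is that for general $\rho$ you never need the limit of $\int_{E^N}\mathcal E(\pi_{\bx})\rho^{\otimes N}(\dd\bx)$, only a lower bound, and that bound comes for free. Realise $\pi_{\bx}$ from one i.i.d. sequence of law $\rho$. The law of large numbers gives $\pi_{\bx}\to\rho$ weakly almost surely. Lower semicontinuity of $\mathcal E$, which is in Assumption~\ref{assum:unifLSI->PL} for exactly this purpose, and Fatou's lemma ($\mathcal E$ being bounded below) then give $\liminf_N\int\mathcal E(\pi_{\bx})\rho^{\otimes N}(\dd\bx)\geqslant\mathcal E(\rho)$. Hence, along the subsequence where $\lambda_N\to\bar\lambda$ and $\frac1N\ln Z_N\to-\inf\mathcal F$, you get $\liminf_N\frac1N\mathcal H(\rho^{\otimes N}|\mu_\infty^N)\geqslant\mathcal F(\rho)-\inf\mathcal F$ for every $\rho\in\mathcal P_2(E)$. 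Your Step~2 and the LSI then give \eqref{eq:PLF} with $\lambda\geqslant\bar\lambda$ directly, with no approximation; this is the paper's route.

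In that route, \eqref{eq:CVE} is used only to identify $\lim\frac1N\ln Z_N$, and there it is applied at critical points, which do belong to the admissible class. Your argument for that part is sound and essentially the paper's: the LSI with $\mathcal I(\mu_*^{\otimes N}|\mu_\infty^N)=o(N)$ at a critical point, nonnegativity of relative entropy at a minimiser, and the fact that $Z_N$ does not depend on the critical point.

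Finally, your claim that every subsequential limit of $\mu_\infty^{N,1}$ along the full sequence equals $\mu_*$ is not supported by the Talagrand bound. That bound only controls $\mathcal W_2(\mu_*,\mu_\infty^{N,1})$ along indices where $\lambda_N$ stays away from $0$.
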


The proof is postponed to Section~\ref{sec:proof-nonlinearPL}.

\subsubsection{Ballistic coarse-grained LSI implies uniform LSI}\label{sec:PL->LSI}

The remaining implication of~\eqref{eq:chaineimplications} is \eqref{eq:ballisticLSI}$+$\eqref{eq:ballisticlambda_N} $\Rightarrow$ \eqref{eq:unifLSI}$+$\eqref{eq:LSI-unifN}. The proof relies on the two-scale approach for proving log-Sobolev inequalities, as developed in~\cite{bauerschmidt2019very}. As a matter of fact, the implication \eqref{eq:ballisticLSI}$+$\eqref{eq:ballisticlambda_N} $\Rightarrow$ \eqref{eq:unifLSI}$+$\eqref{eq:LSI-unifN} has been proven along the proof of \cite[Theorem 1.3]{Dagallier} (which actually essentially states that~\eqref{eq:PLFhat} implies~\eqref{eq:unifLSI}$+$\eqref{eq:LSI-unifN}), in the case of a quadratic pairwise interaction. For more general pairwise interactions, in the same work, the authors also prove in \cite[Theorem 4.2]{Dagallier} a uniform LSI for $\mu_\infty^N$ when $\omega$ is strongly convex (which is stronger than the PŁ inequality), see Section~\ref{sec:convex} on this topic. As discussed in \cite[Section 1.4]{Dagallier}, the proof still works if the convexity condition is replaced by a PŁ inequality, at least when $\dim \mathbb H<\infty$ since \cite{chewi2024ballistic} has to be applied to get the low-temperature LSI~\eqref{eq:ballisticLSI}$+$\eqref{eq:ballisticlambda_N}, instead of the dimension-free Bakry-Emery criterion in the strongly convex case. Since in our presentation we have splitted the various implications~\eqref{eq:chaineimplications}, in this section we just consider that the LSI~\eqref{eq:ballisticLSI}$+$\eqref{eq:ballisticlambda_N}  is given and we follow the end of the proof of~\cite[Theorem 4.2]{Dagallier} to conclude. Since, as mentioned in Section~\ref{sec:parametrized}, we are interested in non-pairwise interactions appearing in numerical applications and the modified energy method of~\cite{Monmarchemetastable}, we consider energies of the form~\eqref{eq:energy-quadra}, slightly extending the settings of~\cite{Dagallier}. Moreover we clarify the dependency with respect to $\mathrm{Tr}(\C^2)$ (compare Lemma~\ref{lem:gammaX} to \cite[Equation (4.51)]{Dagallier}).

\begin{assumption}
\label{assum:PL->unifLSI}
    The energy $\mathcal E$ is of the form~\eqref{eq:energy-quadra} such that:
    \begin{itemize}
        \item The external potential $V\in\mathcal C^2(E,\R)$. Moreover, if $E=\R^d$, it can be decomposed as $V=V_c+V_b+V_L$ where $V_b$ is bounded, $V_L$ is Lipschitz continuous and  $V_c$ is  strongly convex.
        \item The energy $\mathcal E_c$ is flat-convex and bounded, the flat derivatives $\frac{\delta \mathcal E_c}{\delta m}$ and $\frac{\delta^2 \mathcal E_c}{\delta m^2}$ exist, are jointly continuous, bounded, and $\mathcal C^2$ in the space variables. Moreover,  $\na_x\frac{\delta^2 \mathcal E_c}{\delta m^2}$ and $D^2\mathcal E_c$ are bounded.
        \item The parameter map $\varphi$ is Lipschitz continuous and can be written as $\varphi=\C\psi$ with $\psi \in \mathcal C^1(E,\mathbb H)$ and $\C$ a self-adjoint operator such that $\C^2$ is trace-class. We can decompose $\rp=(\rp_\ell,\rp_b) \in \mathbb H_b\times\mathbb H_\ell$ where $\rp_\ell$ is linear and $\rp_b$ is bounded.
    \end{itemize}
\end{assumption}

Under Assumption~\ref{assum:PL->unifLSI}, let $M_b,L_\ell,\kappa_c,M_c,L_\varphi,M_\psi,T_{\C}>0$ be such that  $V_b$ is $M_b$-bounded, $V_L$ is $L_\ell$-Lipschitz continuous, $V_c$ is  $\kappa_c$-strongly convex,  $\|\mathcal E_c\|_\infty + \|\na_x\frac{\delta^2 \mathcal E_c}{\delta m^2}\|_\infty +\| D^2\mathcal E_c\|_\infty \leqslant M_c$, $\varphi$ is $L_\varphi$-Lipschitz, $\|\rp_b\|_\infty \leqslant M_\psi$ and $\mathrm{Tr}(\C^2) \leqslant T_{\C}$. Gather all these parameters in
\begin{equation}
\label{eq:param}
\mathfrak{P} =\{ M_b,L_\ell,\kappa_c,M_c,L_\varphi,M_\psi,T_{\C}\} \,.
\end{equation}
As established in Lemma~\ref{lem:omegaNtoomega}, under Assumption~\ref{assum:PL->unifLSI}, the probability measure $\tilde \nu_N$ in~\eqref{eq:deftildenuN} is well-defined.

\begin{theorem}\label{thm:PL->unifLSI}
Under Assumption~\ref{assum:PL->unifLSI}, there exists $K>0$ which depends only on the parameters $\mathfrak{P}$ in~\eqref{eq:param} such that, for any $N\geqslant 1$, if the normalised measure $\tilde \nu_N$ satisfies a LSI~\eqref{eq:ballisticLSI} then the Gibbs measure $\mu_\infty^N$ satisfies a LSI~\eqref{eq:unifLSI} with  $\lambda_N^{-1} \leqslant K(1+N\tilde \lambda_N^{-1})$. In particular, if the ballistic scaling~\eqref{eq:ballisticlambda_N} holds then the LSI for $\mu_\infty^N$ is uniform in $N$, i.e.~\eqref{eq:LSI-unifN} holds. 
\end{theorem}

The proof is the topic of Section~\ref{sec:proofLSIN}. As it is constructive, in principle it provides an explicit lower bound on $\underline{\lambda}=\liminf \lambda_N$. However it doesn't give a nice expression, and clearly combining Theorems~\ref{thm:PL->unifLSI} and \ref{thm:ChewiStromme} with Proposition~\ref{prop:intermediaryimplications} doesn't give that $\underline{\lambda} \geqslant \lambda$, i.e. it doesn't prove \cite[Conjecture 1]{Pavliotis}. 

\subsection{Final general result}\label{sec:finalresult}

Concatenating all the previous intermediary results in order to answer to the question raised in Section~\ref{sec:motivation} leads to the following.

\begin{theorem}\label{thm:final}
Under Assumptions~\ref{ass:basic}, \ref{assum:finitedim}, \ref{assum:baseLemme1}, \ref{assum:unifLSI->PL} and~\ref{assum:PL->unifLSI}, the following are equivalent:
\begin{enumerate}
\item \emph{(uniform LSI)} For all $N\geqslant 1$, the $N$-particle log-Sobolev constant~\eqref{eq:deflambdaNLSI} is positive. Moreover, $\liminf_{N\rightarrow \infty}\lambda_N >0$.
\item \emph{(non-linear PŁ and uniqueness)} The PŁ constant~\eqref{eq:deflambdaPL} is positive, and $\mathcal F$ admits a unique global minimiser.
\end{enumerate}
\end{theorem}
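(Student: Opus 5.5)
The direction (1)$\Rightarrow$(2) is immediate from Theorem~\ref{thm:unifLSI->PL}. Under Assumption~\ref{assum:unifLSI->PL}, if $\liminf \lambda_N>0$ then a fortiori $\bar\lambda=\limsup\lambda_N>0$. Hence $\mathcal F$ has a unique critical point, which is its unique global minimiser, and the PŁ inequality~\eqref{eq:PLF} holds with $\lambda\geqslant\bar\lambda>0$.

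For (2)$\Rightarrow$(1), I will follow the chain~\eqref{eq:chaineimplications}. First, Proposition~\ref{prop:intermediaryimplications} items 1 and 2 give the PŁ inequality~\eqref{eq:PLFhat} for $\hF$ on $\mathcal M$, and then~\eqref{eq:PLomega} for $\omega$. Item 1 needs $\varphi$ to be $\mathcal C^1$, Lipschitz and non-constant. The first two properties come from Assumption~\ref{assum:PL->unifLSI} ($\varphi=\C\psi$ with $\psi\in\mathcal C^1$, $\varphi$ Lipschitz). If $\varphi$ is constant, the concave part $-\frac12|\mu(\varphi)|^2$ is constant, so $\mathcal E$ is flat-convex. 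In that case the uniform LSI should follow directly from Theorem~\ref{thm:PL->unifLSI} applied with $\mathbb H$ reduced to $\{0\}$ (a trivial $\tilde\nu_N$), or we can simply replace $\varphi$ by $0$ and absorb the constant.

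Second, I will apply Theorem~\ref{thm:ChewiStromme} to obtain the ballistic LSI~\eqref{eq:ballisticLSI}+\eqref{eq:ballisticlambda_N}, with $\tilde\lambda_N/N\to\lambda''>0$. This requires two additional inputs.

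The first input is that $\omega$ is $\mathcal C^2$ with $\Delta\omega\leqslant L(1+|\na\omega|^2)$. I will take this from the regularity lemma mentioned after Theorem~\ref{thm:ChewiStromme} (Lemma~\ref{lem:regularitycheck}), which should apply under Assumptions~\ref{assum:finitedim}, \ref{assum:baseLemme1} and~\ref{assum:PL->unifLSI}.

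The second input is that $\omega$ has a unique global minimiser. To get this from the uniqueness of the minimiser of $\mathcal F$, I will use Lemma~\ref{lem:omegaFhat}. By~\eqref{eq:defomega}, $\omega(\theta)\geqslant\inf\mathcal F$, with equality if and only if there is a minimiser $\rho$ of $\mathcal F$ with $\C^{-1}\theta=\rho(\varphi)$, that is, $\theta=\C\rho(\varphi)$. Since $\C$ is non-singular and $\mathcal F$ is assumed to attain its minimum (Assumption~\ref{ass:basic}), $\inf\omega=\inf\mathcal F$. The set of minimisers of $\omega$ is then the image of the minimisers of $\mathcal F$ under $\rho\mapsto\C\rho(\varphi)$. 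A unique minimiser $\rho_*$ therefore gives a unique minimiser $\theta_*=\C\rho_*(\varphi)$. The one point to verify carefully is that the infimum in~\eqref{eq:defomega} is attained when $\omega(\theta)=\inf\mathcal F$. I expect this to follow from the existence of the minimiser $\bar\mu$ of $\mathcal F_{\theta'}$ in Assumption~\ref{assum:baseLemme1}, applied with the appropriate $\theta'$ relating $\omega$ and $\mathcal F_{\theta'}$.

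Finally, Theorem~\ref{thm:PL->unifLSI} converts the ballistic LSI into $\lambda_N^{-1}\leqslant K(1+N\tilde\lambda_N^{-1})$. This gives $\lambda_N>0$ for every $N$ (since $\tilde\lambda_N>0$ for every $N$ by Theorem~\ref{thm:ChewiStromme}) and $\liminf\lambda_N\geqslant (K(1+1/\lambda''))^{-1}>0$.

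The main obstacle is checking the hypotheses of Theorem~\ref{thm:ChewiStromme}. The minimiser-uniqueness transfer is essentially formal. The $\mathcal C^2$ bound on $\Delta\omega$ is the genuinely technical point, since it requires differentiating $\theta\mapsto\bar\mu_\theta$ a second time. I would obtain it by writing $\na\omega$ in terms of $\bar\mu_\theta(\varphi)$ via the envelope theorem. The Hessian is then a covariance-type term under $\bar\mu_\theta$, bounded using the boundedness and Lipschitz properties of $\varphi$ in Assumption~\ref{assum:PL->unifLSI}.
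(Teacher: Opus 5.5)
Your route is the paper's. (1)$\Rightarrow$(2) is Theorem~\ref{thm:unifLSI->PL}. (2)$\Rightarrow$(1) chains Proposition~\ref{prop:intermediaryimplications} (items 1--2), the minimiser correspondence of Lemma~\ref{lem:omegaFhat}, Theorem~\ref{thm:ChewiStromme} and Theorem~\ref{thm:PL->unifLSI}. Your uniqueness transfer is correct. The attainment you worry about holds with $\theta'=\theta$, since $\omega(\theta)=\mathcal F_\theta(\bar\mu_\theta)+\frac12|\C^{-1}\theta|^2=\mathcal F(\bar\mu_\theta)+\frac12|\bar\mu_\theta(\varphi)-\C^{-1}\theta|^2$.

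The step that fails as written is the hypothesis $\Delta\omega\leqslant L(1+|\na\omega|^2)$ of Theorem~\ref{thm:ChewiStromme}.
\begin{itemize}
\item Lemma~\ref{lem:regularitycheck} does not apply under Assumptions~\ref{assum:finitedim}, \ref{assum:baseLemme1} and~\ref{assum:PL->unifLSI}. It needs the special structure $\mathcal E_c(\mu)=R(\mu(\psi))$ with $R$ convex and $\mathcal C^2$, which a general bounded flat-convex $\mathcal E_c$ does not have.
\item Your fallback also fails. $\varphi$ is not bounded, because its linear part $\C\psi_\ell$ is not.
\item For a general $\mathcal E_c$, the linear response $\na_\theta m_\theta$ is not a plain covariance under $\bar\mu_\theta$.
\item An upper bound on $\na_\theta m_\theta$ is the wrong direction anyway. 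Since $\na^2\omega=\C^{-2}-\na_\theta m_\theta$ and only an upper bound on $\Delta\omega$ is needed, what you need is a lower bound on $\na_\theta m_\theta$.
\item No second derivative of $\theta\mapsto\bar\mu_\theta$ is required either.
\end{itemize}

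The missing observation is that $\hat\omega$ in~\eqref{eq:defomega1} is an infimum over $\rho$ of functions affine in $\theta$, hence concave. By~\eqref{eq:naomega}, $\na\hat\omega(\theta)=-m_\theta$, which is $\mathcal C^1$ by Assumption~\ref{assum:baseLemme1}. So $\hat\omega\in\mathcal C^2$ with $\na^2\hat\omega\preceq0$, i.e. $\na_\theta m_\theta\succeq 0$. This is exactly the positivity shown in Lemma~\ref{lem:regularitycheck}, but it holds in general. Hence $\omega\in\mathcal C^2$ with $\Delta\omega\leqslant\mathrm{Tr}(\C^{-2})$, which is the required hypothesis with $L=\mathrm{Tr}(\C^{-2})$, using only the assumptions of the theorem. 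With this replacement your argument is complete.
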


\new{
For clarity, let us provide simple settings where all the conditions required in Theorem~\ref{thm:final} are met. Since we will consider only the case $\dim \mathbb H<\infty$ and we won't try to make any constant explicit, we can focus on the case $\C=I$, $\varphi=\psi$.
}

\new{
\begin{proposition}\label{prop:assumfinal}
    Assume that the energy $\mathcal E$ is of the form
    \begin{equation}
    \label{eq:sfmsdfgdfg}
    \mathcal E\po \mu \pf = \mu(V) + R\po \mu( \varphi_b )\pf -  \left|\int_{E} Ax \mu(\dd x)\right|^2 \,,
    \end{equation}
    with the following conditions
    \begin{itemize}
        \item The external potential $V\in\mathcal C^2(E,\R)$. Moreover, if $E=\R^d$, it can be decomposed as $V=V_c+V_b+V_L$ where $V_b$ is bounded, $V_L$ is Lipschitz continuous and  $V_c$ is  strongly convex.
              \item The parameter map $\varphi_b :E\rightarrow \mathbb H$  is Lipschitz continuous and  bounded.  If $E=\T^d$, $A=0$, and if $E=\R^d$, $A$ is a $r\times d$ matrix for some $r\in\N$.
              \item The dimension of $\mathbb H$ is finite.
              \item The function $R\in\mathcal C^2(\mathbb H,\R)$ is semi-convex (i.e. $\theta \mapsto R(\theta) + \kappa |\theta|^2$ is convex for some $\kappa>0$) with a bounded second order derivative.
        \end{itemize}
        Then Assumptions~\ref{ass:basic}, \ref{assum:finitedim}, \ref{assum:baseLemme1}, \ref{assum:unifLSI->PL} and~\ref{assum:PL->unifLSI} holds, and thus Theorem~\ref{thm:final} applies.
\end{proposition}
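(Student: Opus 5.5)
We start by putting the energy \eqref{eq:sfmsdfgdfg} into the quadratic form \eqref{eq:energy-quadra}. Let $\kappa>0$ be a semi-convexity constant of $R$ and set $\mathbb H' = \mathbb H\times \R^r$,
\[\varphi(x) = \po\sqrt{2\kappa}\,\varphi_b(x), \sqrt 2 A x\pf, \qquad \mathcal E_c(\mu) = R(\mu(\varphi_b)) + \kappa|\mu(\varphi_b)|^2 .\]
The map $\mathcal E_c$ is flat-convex, because it is a convex function composed with a linear map. With $\C=I$ and $\psi=\varphi$ we then have
\[\mathcal E(\mu)=\mu(V)+\mathcal E_c(\mu)-\tfrac12|\mu(\varphi)|^2 .\]
We check Assumption~\ref{assum:PL->unifLSI} directly with the splitting $\psi_\ell = \sqrt2 Ax$ (linear) and $\psi_b=\sqrt{2\kappa}\varphi_b$ (bounded). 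The trace of $\C^2$ equals $\dim\mathbb H'<\infty$, and $\varphi$ is Lipschitz.

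For $\mathcal E_c$ we compute
\[\frac{\delta\mathcal E_c}{\delta m}(\mu,x) = \na \tilde R(\mu(\varphi_b))\cdot \varphi_b(x), \qquad \frac{\delta^2\mathcal E_c}{\delta m^2}(\mu,x,y) = \varphi_b(x)\cdot \na^2\tilde R(\mu(\varphi_b))\varphi_b(y),\]
where $\tilde R = R+\kappa|\cdot|^2$. These are bounded because $\varphi_b$ is bounded and $\na\tilde R$, $\na^2\tilde R$ are bounded on the bounded set of attainable $\mu(\varphi_b)$. Note that $\mathcal E_c$ itself is bounded for the same reason.

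The bounds on $\na_x\frac{\delta^2\mathcal E_c}{\delta m^2}$ and $D^2\mathcal E_c$ require $\na\varphi_b$ to be bounded (true since $\varphi_b$ is Lipschitz). The second space derivative of $\varphi_b$ is a problem: the assumption asks for $\mathcal C^2$ in space, whereas $\varphi_b$ is only Lipschitz. We would either read the proposition as implicitly assuming $\varphi_b\in\mathcal C^2$ with bounded derivatives, or mollify; we flag this as a point to settle. Assumption~\ref{assum:finitedim} is immediate. On $\T^d$ we have $A=0$, so nothing linear appears.

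For Assumption~\ref{ass:basic}, with $E=\R^d$, we use
\[\frac{\delta\mathcal E}{\delta m}(\mu,x)=V(x)+\na R(\mu(\varphi_b))\cdot\varphi_b(x)-2\,(A\mu(x))\cdot Ax ,\]
where $A\mu(x)$ is shorthand for $\int A y\,\mu(\dd y)$. The first term grows at least like $\tfrac{\kappa_c}{2}|x|^2-C$, by strong convexity of $V_c$, boundedness of $V_b$ and Lipschitz continuity of $V_L$. The second term is bounded and the third is linear in $x$, which gives the quadratic lower bound with $\mu$-dependent constants.

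The global coercivity $\mathcal E(\mu)\geq c_0\mu(|x|^2)-C_0$ is harder, because $-|A\mu(x)|^2$ can be as negative as $-\|A\|^2\mu(|x|^2)$. This is the main obstacle. As stated there is no smallness assumption on $A$, so we would need to check whether $V_c$'s strong convexity dominates. If not, we would try to use that Assumption~\ref{ass:basic} only matters through finiteness and lower-boundedness of $\mathcal F$, and recover these from $\kappa_c>2\|A\|^2$ or an equivalent hypothesis. We expect the paper either requires such a condition implicitly or argues via the entropy. Existence of a minimiser then follows from lower semicontinuity plus tightness.

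For Assumption~\ref{assum:baseLemme1}, we first note that
\[\mathcal F_\theta(\rho)=\rho(V)+\mathcal E_c(\rho)-\rho(\psi)\cdot\theta+\mathcal H(\rho)\]
is strictly flat-convex, since the entropy is strictly convex and the remaining terms are linear or flat-convex. It is coercive, so it has a unique minimiser (as in Lemma~\ref{lem:uniqueminimumtheta}). This minimiser is characterised by the fixed point
\[\bar\mu_\theta\propto\exp\po -V-\na\tilde R(\bar\mu_\theta(\varphi_b))\cdot\varphi_b+\theta\cdot\psi\pf .\]
Writing $u=\bar\mu_\theta(\varphi_b)\in\mathbb H$ reduces this to a finite-dimensional equation $G(u,\theta)=0$. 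Its $u$-Jacobian is $I+\mathrm{Cov}\,\na^2\tilde R$, a product of positive matrices plus the identity, which is invertible by the convexity of $\tilde R$. The implicit function theorem then gives $\mathcal C^1$ dependence of $u$, hence of $m_\theta$ and $\mathcal F(\bar\mu_\theta)$, on $\theta$. Differentiating under the integral is justified by the Gaussian-type tails.

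Finally, Assumption~\ref{assum:unifLSI->PL} follows the pairwise Example. We would show that $D\mathcal E(\pi_\bx,x_1)-D\mathcal E(\mu,x_1)$ is controlled by $|\pi_\bx(\varphi_b)-\mu(\varphi_b)|+|\pi_\bx(Ax)-\mu(Ax)|$ times $\|\na\varphi_b\|_\infty+|A|$, using that $\na R$ is Lipschitz. The $L^2(\mu^{\otimes N})$ norm of these differences is $O(N^{-1/2})$ by the law of large numbers in $L^2$. For \eqref{eq:CVE} we use the same $L^2$ convergence and continuity of $R$, together with the fact that $\mu\propto e^{-\frac{\delta\mathcal E}{\delta m}(\nu,\cdot)}$ has Gaussian tails. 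Lower semicontinuity of $\mathcal E$ follows from continuity of $R$, Fatou for $\mu(V)$ ($V$ is lower-bounded), and lower semicontinuity of the $A$-term only along $\mathcal W_2$-type convergence, which we handle as in the coercivity step.
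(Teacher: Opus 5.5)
Your treatment of the substantive part follows the paper's route. The rewriting into \eqref{eq:energy-quadra} is the one of Section~\ref{sec:parametrized}. Assumption~\ref{assum:baseLemme1} is obtained as in Lemmas~\ref{lem:uniqueminimumtheta} and~\ref{lem:regularitycheck}: strict flat-convexity plus coercivity of $\mathcal F_\theta$, then the implicit function theorem for a finite-dimensional fixed point whose Jacobian is invertible by convexity of $\tilde R$.

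\textbf{Gap.} You stop once the five assumptions are checked, but the paper singles out a second non-trivial verification. Theorem~\ref{thm:final} is proved through Theorem~\ref{thm:ChewiStromme}, which needs $\omega\in\mathcal C^2$ with $\Delta\omega\leqslant L(1+|\na\omega|^2)$. This is not among the listed assumptions, so ``Theorem~\ref{thm:final} applies'' does not yet follow from your checks. It can be extracted from your computation, but invertibility of the Jacobian is not enough: you need the sign of $\na_\theta m_\theta$.
\begin{itemize}
\item By \eqref{eq:naomega}, $\na^2\omega(\theta)=\C^{-2}-\na_\theta m_\theta$.
\item Parametrise the fixed point by $m=\bar\mu_\theta(\psi)$ as in Lemma~\ref{lem:regularitycheck}, with the convex function $m\mapsto\tilde R(m_b/\sqrt{2\kappa})$ (Hessian $Q\succeq0$) in place of $R$.
\item With $\Sigma_\theta^2=\mathrm{Cov}_{\bar\mu_\theta}(\psi)$, this gives $\na_\theta m_\theta=(I+\Sigma_\theta^2Q)^{-1}\Sigma_\theta^2=\Sigma_\theta(I+\Sigma_\theta Q\Sigma_\theta)^{-1}\Sigma_\theta\succeq0$.
\item Hence $\na^2\omega\preceq\C^{-2}=I$ and $\Delta\omega\leqslant\dim\mathbb H'$, which is the required bound.
\end{itemize}

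\textbf{The two points you flag.} Both are real defects of the statement, not of your argument; the paper's proof passes over them as ``straightforward''.

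Coercivity cannot be recovered from the entropy. Take $R=0$, $V(x)=|x|^2/2$ and $A=aI_d$ with $a^2>1/2$. Then $\mathcal F(\mathcal N(y,I_d))=(\frac12-a^2)|y|^2+\mathrm{const}\to-\infty$. Also $U_N(\bx)=N(\frac12-a^2)|\bar x|^2+\frac12\sum_i|x_i-\bar x|^2$ with $\bar x=\frac1N\sum_ix_i$, so $e^{-U_N}\notin L^1$. Thus Assumption~\ref{ass:basic} fails although every listed hypothesis holds, and a domination hypothesis must be added. The Example right after the proposition uses $V(x)\geqslant\varepsilon|x|^2+|Ax|^2-C$. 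By Jensen ($|\mu(Ax)|^2\leqslant\mu(|Ax|^2)$) this gives coercivity. It also gives the lower semicontinuity of $\mathcal E$ you need, since bounded second moments make $\mu\mapsto\mu(Ax)$ continuous along weakly converging sequences. This condition is implied by your $\kappa_c>2\|A\|^2$ but is more general.

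The regularity issue is also genuine. The requirements that $\psi\in\mathcal C^1$, that $\frac{\delta\mathcal E_c}{\delta m}$ be $\mathcal C^2$ in space with $D^2\mathcal E_c$ bounded (Assumption~\ref{assum:PL->unifLSI}), and that $D\mathcal E(\mu,\cdot)$ be continuous (Assumption~\ref{assum:unifLSI->PL}) all need $\varphi_b\in\mathcal C^2$ with bounded first and second derivatives. This must be added to the hypotheses. Mollifying is not an option, since the assumptions concern the given energy.
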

}

\new{To prove Proposition~\ref{prop:assumfinal}, the only non-straightforward verifications are, first, to check Assumption~\ref{assum:baseLemme1}: it follows from Lemmas~\ref{lem:uniqueminimumtheta} and~\ref{lem:regularitycheck}; and, second, the bound on $\Delta \omega$ required in Theorem~\ref{thm:ChewiStromme}: Lemma~\ref{lem:regularitycheck} shows that $\na^2 \omega$ is bounded. }

\begin{example}
Consider an external potential $V\in\mathcal C^2(\R)$ which is the sum of a strongly convex and a bounded functions, and a pairwise interaction of the form
\[W(x,y) = \frac12 |A(x-y)|^2 + \sum_{k=1}^m \co a_k \cos\po \nu_k(x-y )\pf + b_k \sin\po \nu_{k}(x-y )\pf\cf \]
for some  $r\times d$ matrix $A$ and some coefficients $(a_k,b_k,\nu_k)_{k\in\cco 1,m\ccf} \in \R^{3m}$, $m\in\N$. Assume that
\[\forall x\in\R^d,\qquad
 V(x) \geqslant \varepsilon|x|^2 +  |Ax|^2 -C\]
for some $C,\varepsilon>0$. Expanding the sines and cosines we can write $\mathcal E(\mu)=\mu(V)+ \frac12 \mu^{\otimes 2}(W)$ in the form~\eqref{eq:sfmsdfgdfg} with
\[\varphi(x) = \po A x, (\cos(\nu_k x),\sin(\nu_k x))_{k\in\cco 1,m\ccf}\pf \in \mathbb H = \R^{r+2m}\,,\]
and a quadratic function $R$.  With the motivation provided in~\cite{Monmarchemetastable} we can consider more generally an energy of the form
\[\mathcal E_m(\mu) = \mu(V) + \frac12 \mu^{\otimes 2} (W) + h \po \mu(\varphi) \pf  \]
where $h$ is a $\mathcal C^2$ bounded function with $\na h$ and $\na^2 h$ bounded. Then all the conditions required in Proposition~\ref{prop:assumfinal} holds.
\end{example}

\subsection{Convexity conditions}\label{sec:convex}

In the case where possibly $\dim \mathbb H = \infty$, we cannot apply~\cite{chewi2024ballistic} to get a ballistic renormalised LSI~\eqref{eq:ballisticLSI}, but we can still apply the Bakry-Emery criterion if $\omega$ is strongly convex, as has been considered in \cite{Dagallier}. In \cite[Lemma 4.3]{Dagallier}, this strong convexity is shown to be equivalent to the one of $\hF$. In the spirit of Proposition~\ref{prop:intermediaryimplications}, we  now show that this can be deduced from a suitable strong convexity condition on the original free energy $\mathcal F$ itself. Moreover we show a possibly local version of this property, which may be of interest in order to generalize the results of~\cite{chewi2024ballistic} to infinite dimension (since the latter rely on the local strong convexity around the local minimiser, which in finite dimension is implied by the PŁ inequality and the minimiser uniqueness -- this local convexity is also used in \cite[Lemma 6]{Monmarchemetastable} for a similar result), and to apply the modified energy method of~\cite{Monmarchemetastable} in more general settings.

\begin{lemma}\label{lem:convexite}
Assume that for all $\theta \in \mathbb H$, $\mathcal F_\theta$
admits over $\mathcal P_2(E)$ a unique global minimiser $\bar\mu_\theta \in \mathcal P_2(E)$, and that there exists $\kappa>0$ and a convex subset $\mathcal B$ of $\mathbb H$  such that
\begin{equation}
\label{eq:convexiteF}
\forall t\in[0,1],\qquad t \mathcal F(\mu) + (1-t) \mathcal F(\rho) \geqslant  \mathcal F( t \mu +(1-t)\rho) +  \frac{\kappa}{2}t(1-t) |\mu(\varphi)-\rho(\varphi)|^2
\end{equation}
for all $\mu,\rho \in \mathcal P_2(E)$ such that $\mu(\psi),\rho(\psi) \in \mathcal B$. Then, for all $\theta ,\zeta\in \mathbb H$ such that $m_\theta:=\hat \mu_\theta(\psi)$ and $m_\zeta$ are in $\mathcal B$,
\begin{equation}
\label{eq:convexiteFhat}
\forall t\in[0,1],\qquad  t \hF(m_\theta) + (1-t) \hF(m_\zeta) \geqslant \hF( t m_\theta +(1-t)m_\zeta) +  \frac{\kappa}{2}t(1-t) |\C (m_\theta-m_\zeta)|^2\,.
\end{equation}
In turns, this inequality implies that for all $\theta,\zeta\in\mathbb H$ with $m_\theta,m_\zeta\in\mathcal B$,
\begin{equation}
\label{eq:convexiteomega}
\forall t\in[0,1],\qquad t\omega(\theta) + (1-t) \omega(\zeta) \geqslant \omega(t\theta + (1-t)\zeta)  +   \frac{\kappa }{2(1+\kappa)}t(1-t) |\C^{-1} ( \theta-\zeta)|^2\,.
\end{equation}
\end{lemma}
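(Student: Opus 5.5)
The plan rests on two identifications. First, $\hF(m_\theta)=\mathcal F(\bar\mu_\theta)$. Second, $\omega$ is an infimal convolution of $\hF$ with a quadratic, and the infimum is attained at $m_\theta$.

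\textbf{Step 1: $\hF(m_\theta)=\mathcal F(\bar\mu_\theta)$.} Since $\varphi=\C\psi$, on the constraint set $\{\rho:\rho(\psi)=m\}$ we have
\[\mathcal F_\theta(\rho)=\mathcal F(\rho)+\tfrac12|\C m|^2-m\cdot\theta,\]
where the last two terms do not depend on $\rho$. So $\bar\mu_\theta$, which minimises $\mathcal F_\theta$ globally, also minimises $\mathcal F$ among measures with $\rho(\psi)=m_\theta$. Hence $\hF(m_\theta)=\mathcal F(\bar\mu_\theta)$.

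\textbf{Step 2: proof of \eqref{eq:convexiteFhat}.} Take $\theta,\zeta$ with $m_\theta,m_\zeta\in\mathcal B$. Apply \eqref{eq:convexiteF} with $\mu=\bar\mu_\theta$ and $\rho=\bar\mu_\zeta$; this is allowed because their $\psi$-means lie in $\mathcal B$. By linearity, the mixture $t\bar\mu_\theta+(1-t)\bar\mu_\zeta$ has $\psi$-mean $tm_\theta+(1-t)m_\zeta$, so the definition \eqref{eq:defFhat} gives
\[\hF\bigl(tm_\theta+(1-t)m_\zeta\bigr)\leqslant \mathcal F\bigl(t\bar\mu_\theta+(1-t)\bar\mu_\zeta\bigr).\]
We also have $|\bar\mu_\theta(\varphi)-\bar\mu_\zeta(\varphi)|=|\C(m_\theta-m_\zeta)|$. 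Combining these with Step 1 gives \eqref{eq:convexiteFhat} directly.

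\textbf{Step 3: $\omega$ as an infimal convolution.} From \eqref{eq:defomega1} and \eqref{eq:defomega}, completing the square gives
\[\omega(\theta)=\inf_{m}\Bigl\{\hF(m)+\tfrac12|\C m-\C^{-1}\theta|^2\Bigr\}.\]
This is understood for $\theta\in\mathrm{Im}\,\C$; in the finite-dimensional nonsingular case there is no issue. The infimum is attained at $m=m_\theta$, because $\bar\mu_\theta$ realises the infimum in \eqref{eq:defomega1}. Hence
\[\omega(\theta)=\hF(m_\theta)+\tfrac12|\C m_\theta-\C^{-1}\theta|^2,\]
and similarly for $\zeta$.

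\textbf{Step 4: proof of \eqref{eq:convexiteomega}.} Set $\theta_t=t\theta+(1-t)\zeta$ and use the competitor $m=tm_\theta+(1-t)m_\zeta$ in the infimum for $\omega(\theta_t)$. Note that this competitor need not lie in $\mathcal B$ or in $\mathcal M$: only the inequality $\omega\leqslant$ (competitor value) is needed, while the convexity of $\hF$ is only invoked through \eqref{eq:convexiteFhat}, whose endpoints $m_\theta,m_\zeta$ are in $\mathcal B$. Write $a=\C(m_\theta-m_\zeta)$ and $b=\C^{-1}(\theta-\zeta)$. The quadratic term satisfies the exact identity
\[t|x|^2+(1-t)|y|^2-|tx+(1-t)y|^2=t(1-t)|x-y|^2,\]
applied to $x=\C m_\theta-\C^{-1}\theta$ and $y=\C m_\zeta-\C^{-1}\zeta$, so that $x-y=a-b$. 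Using \eqref{eq:convexiteFhat} for the $\hF$ part, we obtain
\[t\omega(\theta)+(1-t)\omega(\zeta)-\omega(\theta_t)\geqslant \tfrac{t(1-t)}2\bigl(\kappa|a|^2+|a-b|^2\bigr).\]
Minimising the right-hand side over $a\in\mathbb H$ (the minimum is at $a=b/(1+\kappa)$) gives $\kappa|a|^2+|a-b|^2\geqslant\frac{\kappa}{1+\kappa}|b|^2$. This yields \eqref{eq:convexiteomega}.

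\textbf{Main obstacle.} I expect the only delicate point to be Steps 1 and 3: identifying $\hF(m_\theta)$ with $\mathcal F(\bar\mu_\theta)$ and checking attainment of the infimum in the formula for $\omega$. Both depend on uniqueness and existence of the minimiser of $\mathcal F_\theta$ and on the relation $\varphi=\C\psi$. The rest is an algebraic inf-convolution estimate. If $\dim\mathbb H=\infty$, I would interpret $\C^{-1}\theta$ only on $\mathrm{Im}\,\C$, with both sides taken to be $+\infty$ otherwise.
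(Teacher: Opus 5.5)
Your proof is correct. The first part, Steps 1 and 2, is exactly the paper's argument for \eqref{eq:convexiteFhat}.

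The second part uses the same inf-convolution computation as the paper; your minimisation of $\kappa|a|^2+|a-b|^2$ over $a$ is the paper's infimum over $\Delta m$. The organisation differs, though. The paper writes $t\omega(\theta)+(1-t)\omega(\zeta)$ as an infimum over all pairs $(m_0,m_1)\in\mathbb H^2$, applies the convexity of $\hF$ inside that infimum, and then decouples in the variables $(m_t,\Delta m)$. You instead evaluate at the attained minimisers $m_\theta,m_\zeta$ and only need the upper bound on $\omega(\theta_t)$ given by the competitor $tm_\theta+(1-t)m_\zeta$.

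Your organisation is the more careful of the two. As written, the paper's step invokes \eqref{eq:convexiteFhat} for arbitrary $m_0,m_1\in\mathbb H$. The first part of the lemma only establishes it at points $m_\theta,m_\zeta\in\mathcal B$, which matters when $\mathcal B\neq\mathbb H$. What justifies the paper's step is precisely the attainment of the infimum at $m_\theta$ and $m_\zeta$, which you make explicit in Step 3. You also correctly note that the competitor $tm_\theta+(1-t)m_\zeta$ need not lie in $\mathcal B$ or $\mathcal M$.
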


In the case where $\mathcal B= \mathbb H$,  by the Bakry-Emery criterion, the convexity condition~\eqref{eq:convexiteomega} implies the ballistic LSI~\eqref{eq:ballisticLSI} with $\tilde \lambda_N \geqslant N\kappa/(1+\kappa)$, and Theorem~\ref{thm:PL->unifLSI} can be applied to deduce a uniform-in-$N$ LSI~\eqref{eq:LSI-unifN}. As a conclusion, we get the following, which extends   \cite[Theorem 4.2]{Dagallier} to non-pairwise interactions and state the convexity condition in terms of $\mathcal F$ instead of $\hF$:

\begin{theorem}
\label{thm:generalconvexity}
Under Assumptions~\ref{assum:baseLemme1}, \ref{assum:unifLSI->PL} and~\ref{assum:PL->unifLSI}, suppose furthemore that there exists $\kappa>0$ such that~\eqref{eq:convexiteF} holds for all $\mu,\rho \in \mathcal P_2(E)$. Then for all $N\geqslant 1$ the LSI~\eqref{eq:unifLSI} holds, with $\liminf_{N\rightarrow \infty} \lambda_N >0$.
\end{theorem}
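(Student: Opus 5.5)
The plan is to chain Lemma~\ref{lem:convexite} (with $\mathcal B=\mathbb H$), the Bakry--Emery criterion for $\tilde\nu_N$, and Theorem~\ref{thm:PL->unifLSI}. This works without Assumption~\ref{assum:finitedim} because every constant is dimension-free. When $\dim\mathbb H=\infty$, I would first run the argument for projected parameter maps $\Pi_n\varphi$ as in Remark~\ref{rem:diminfini}. Even in that case the conclusion concerns only $\mu_\infty^N$, which involves $\varphi$ through $U_N$; so the cleanest route is to carry out the coarse-graining directly in $\mathbb H$ and use the Gaussian reference measure $\gamma_{\C}$.

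\textbf{Step 1 (convexity of $\omega$).} Since~\eqref{eq:convexiteF} holds for all $\mu,\rho$, Lemma~\ref{lem:convexite} applies with $\mathcal B=\mathbb H$. It gives~\eqref{eq:convexiteomega} for all $\theta,\zeta$, provided every $\theta$ is of the form needed there. This is where Assumption~\ref{assum:baseLemme1} is used: it supplies the unique minimiser $\bar\mu_\theta$ for every $\theta$. Hence $\hat\omega(\theta)=\omega(\theta)-\tfrac12|\C^{-1}\theta|^2$ is such that $e^{-N\hat\omega}$ is a log-concave density relative to $\gamma_{\C}$ with extra curvature. Concretely, the Gaussian $\gamma_{\C}$ contributes the Hessian $N\C^{-2}$, and~\eqref{eq:convexiteomega} says that the total potential $N\omega$ satisfies $\na^2(N\omega)\geqslant \frac{N\kappa}{1+\kappa}\C^{-2}$ in the sense of convexity along segments.

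\textbf{Step 2 (ballistic LSI by Bakry--Emery).} I would apply the Bakry--Emery criterion in the preconditioned variable $\theta=\C\eta$. There the potential $\eta\mapsto N\omega(\C\eta)$ is $\frac{N\kappa}{1+\kappa}$-strongly convex, giving an LSI with Fisher information $|\na_\eta\cdot|^2=|\C\na_\theta\cdot|^2$, which is exactly $\mathcal I_{\C}$. So~\eqref{eq:ballisticLSI} holds with $\tilde\lambda_N\geqslant N\kappa/(1+\kappa)$, and~\eqref{eq:ballisticlambda_N} follows. In infinite dimension I would argue with the finite-dimensional projections $\Pi_n$: the strong convexity modulus is independent of $n$, the Bakry--Emery constant is dimension-free, and one passes to the limit using the well-posedness of $\tilde\nu_N$ from Lemma~\ref{lem:omegaNtoomega}. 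Since Bakry--Emery does not depend on dimension, this limit is harmless. One only needs semicontinuity of the entropy and Fisher information under the cylindrical approximation, which I would justify on smooth cylindrical test functions and then by density.

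\textbf{Step 3 (conclusion).} Theorem~\ref{thm:PL->unifLSI} now gives, for each $N$,
\[\lambda_N^{-1}\leqslant K\po 1+N\tilde\lambda_N^{-1}\pf\leqslant K\po 2+\kappa^{-1}\pf.\]
This yields~\eqref{eq:unifLSI} for every $N$ and $\liminf\lambda_N\geqslant \big(K(2+1/\kappa)\big)^{-1}>0$. Assumption~\ref{assum:unifLSI->PL} is not actually needed for this direction; it only guarantees the consistency with Theorem~\ref{thm:unifLSI->PL}.

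The main obstacle is Step 2 in infinite dimension. One must make sure that the notion of LSI for $\tilde\nu_N$ used as input in Theorem~\ref{thm:PL->unifLSI} (whose proof handles $\dim\mathbb H=\infty$) coincides with what the projection argument produces. I would first try to check that the proof of Theorem~\ref{thm:PL->unifLSI} only uses the LSI for finite-dimensional marginals with uniform constants. If it does, the limit in $n$ is never needed.
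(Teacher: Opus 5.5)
Your proposal is correct and follows the paper's own route: apply Lemma~\ref{lem:convexite} with $\mathcal B=\mathbb H$, then Bakry--Emery in the preconditioned variable to get $\tilde\lambda_N\geqslant N\kappa/(1+\kappa)$, then Theorem~\ref{thm:PL->unifLSI} to get $\lambda_N^{-1}\leqslant K(2+\kappa^{-1})$; your cylindrical-approximation remarks for $\dim\mathbb H=\infty$ are more careful than the paper, which simply invokes Bakry--Emery. One slip in Step 1: $\hat\omega$ is an infimum of affine functions of $\theta$, hence concave, so $e^{-N\hat\omega}$ is not log-concave relative to $\gamma_{\C}$; instead \eqref{eq:convexiteomega} only bounds this concavity ($\hat\omega+\frac{1}{2(1+\kappa)}|\C^{-1}\cdot|^2$ is convex), and the Gaussian factor supplies the net curvature $\frac{N\kappa}{1+\kappa}\C^{-2}$ that you correctly use in Step 2.
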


\section{Counter-example without minimiser uniqueness}\label{sec:contreexemple}

Since a PŁ inequality doesn't imply uniqueness of the global minimiser, in view of Theorem~\ref{thm:final} it is easy to construct counter-examples to the implication
\[\lambda>0\qquad \Rightarrow \qquad \liminf_{N\rightarrow \infty} \lambda_N>0\]
when the uniqueness condition is not satisfied. Let us provide explicitly such a counter-example. Given $R\in \mathcal C^2(\R,\R)$, consider for $\rho\in\mathcal P_2(\R)$ the energy
\begin{equation}
\label{eq:Etoymodel}
\mathcal E\po \rho\pf = \frac12 \int_{\R} |x|^2 \rho(\dd x) + R\po \rho(\varphi)\pf  + \frac{1}{2}\ln (2\pi)
\end{equation}
with $\varphi(x) = x$. This toy model is extensively studied in~\cite{Mtoymodel}. The corresponding free energy $\mathcal F=\mathcal E + \mathcal H$ can be written
\[\mathcal F(\rho) = f\po\mu(\varphi)\pf + \mathcal H(\mu|\gamma_{\mu(\varphi)}) \,, \] 
with $f(\theta)= R(\theta) + \frac12|\theta|^2 $ and $\gamma_\theta = \mathcal N(\theta,1)$. In particular, $\mathcal F(\rho) > \mathcal F(\gamma_{\rho(\varphi)})$ if $\rho \neq \gamma_{\rho(\varphi)}$, and since $\gamma_{\rho(\varphi)}(\varphi) = \rho(\varphi)$, we get that, for all $\theta\in\R$, 
\[\hF(\theta) = \mathcal F(\gamma_{\theta}) = f(\theta) \,. \]
Moreover, the set of minimizers of $\mathcal F$ is exactly $\{\gamma_\theta\, : \, f(\theta)=\inf f\}$, and $\inf \mathcal F=\inf f$. By Lemma~\ref{lem:omegaFhat}, we also get that the set of minimizers of $\omega$ coincide with the one of $f$ (in fact $\omega$ is a Moreau  envelope of $f$ and thus this is a general fact). If $f$ admits an open interval of minimizers, then it is easy to see that the log-Sobolev constant of $\tilde \nu_N \propto e^{-N \omega}$ cannot be larger than the one of the uniform measure over this interval. Since the latter is independent from $N$, the ballistic regime~\eqref{eq:ballisticlambda_N} doesn't hold. In other words, in the chain of implications~\eqref{eq:chaineimplications}, we see that the assumption that the minimiser if unique is necessary in the step~\eqref{eq:PLomega}$\Rightarrow$ \eqref{eq:ballisticLSI}$+$\eqref{eq:ballisticlambda_N}.

To design easily an explicit counter-example to~\eqref{eq:equivalence}, we rely on the following.

\begin{lemma}\label{lem:toymodellemma}
For the model~\eqref{eq:Etoymodel}, if $f$ satisfies a PŁ inequality 
\[\forall \theta\in \R,\qquad f(\theta) - \inf f \leqslant \frac{1}{2\lambda'}|\na f(\theta)|^2 \]
for some $\lambda'>0$, then the PŁ constant~\eqref{eq:deflambdaPL} of $\mathcal F$ satisfies $\lambda \geqslant \min(1,\lambda')$.
\end{lemma}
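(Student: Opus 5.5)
We decompose both the free energy and the dissipation along the mean parameter $m=\rho(\varphi)=\int x\rho(\dd x)$. As noted above, $\mathcal F(\rho) = f(m) + \mathcal H(\rho|\gamma_m)$ and $\inf\mathcal F=\inf f$, so
\[\mathcal F^c(\rho) = \po f(m)-\inf f\pf + \mathcal H(\rho|\gamma_m)\,.\]
For the dissipation, $D\mathcal E(\rho,x) = x + R'(m)$, so $\na\ln\rho + D\mathcal E(\rho,\cdot) = \na \ln(\rho/\gamma_m) + (m + R'(m)) = g + f'(m)$, where $g:=\na\ln(\rho/\gamma_m)$, since $\na\ln\gamma_m(x) = -(x-m)$.

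The key step is to show that the cross term vanishes. Indeed,
\[\int g\,\rho = \int \na(\rho/\gamma_m)\gamma_m = \int (\rho/\gamma_m)(x-m)\gamma_m = \rho(x)-m = 0\]
by integration by parts. Hence
\[\mathcal I(\rho) = \int |g|^2\rho + |f'(m)|^2 = \mathcal I(\rho|\gamma_m) + |f'(m)|^2\,.\]

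Next we bound each piece. The standard Gaussian $\gamma_m$ satisfies the LSI with constant $1$, which gives $\mathcal H(\rho|\gamma_m)\leqslant \frac12\mathcal I(\rho|\gamma_m)$. The assumed PŁ inequality gives $f(m)-\inf f\leqslant \frac1{2\lambda'}|f'(m)|^2$. Summing,
\[\mathcal F^c(\rho)\leqslant \frac{1}{2\min(1,\lambda')}\mathcal I(\rho)\,,\]
that is, $\lambda\geqslant\min(1,\lambda')$.

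The main obstacle is technical rather than structural: justifying the integration by parts and the Gaussian LSI for a general $\rho\in\mathcal P_2(\R)$. Since $\rho$ has finite second moment, $m$ is well defined. When $\mathcal I(\rho)=\infty$ there is nothing to prove. Otherwise, $\int|g|^2\rho<\infty$ (finite $\mathcal I(\rho)$ together with finite $|f'(m)|$), so $\sqrt{\rho/\gamma_m}\in H^1(\gamma_m)$. The identity $\int g\rho=0$ then follows by approximation with compactly supported cutoffs, using the finiteness of the second moment to control boundary terms.
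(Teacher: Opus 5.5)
Your proof is correct. It is the natural argument built on the decomposition $\mathcal F(\rho)=f(\rho(\varphi))+\mathcal H(\rho|\gamma_{\rho(\varphi)})$ that the paper records just before the lemma. You add the orthogonal splitting $\mathcal I(\rho)=|f'(m)|^2+\mathcal I(\rho|\gamma_m)$ from the vanishing cross term, then apply the unit-Gaussian LSI and the PŁ inequality for $f$ term by term. This is essentially the content of the implication the paper defers to in \cite[Proposition 4]{Mtoymodel}, since it gives no proof of its own.
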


This is one of the statements in \cite[Proposition 4]{Mtoymodel}, to which we refer for the proof. Notice that, in this proposition, it is assumed that $f$ has a unique global minimizer, but this is not used in the proof of the implication $(i)\Rightarrow(ii)$ in \cite[Proposition 4]{Mtoymodel}, which is the one that gives Lemma~\ref{lem:toymodellemma}.

With this lemma, it is easy to construct a free energy $\mathcal F$ with several minimisers and satisfying a PŁ inequality, for instance we take the energy~\eqref{eq:Etoymodel} with
\begin{equation}
\label{eq:specifictoy}
R(\theta) = f(\theta) - \frac{1}{2} \theta^2 \qquad f(\theta) =  (|\theta|-1)_+^2\,. 
\end{equation}
Notice that $R$ is $\mathcal C^1$, has a lower-bounded curvature and is convex outside $[-1,1]$. For $\theta>1$,
\[|f'(\theta)|^2 = 4|\theta-1|^2 \geqslant 4 \po f(\theta)-\inf f\pf\,, \]
and similarly for $\theta<-1$. As a consequence, $\mathcal F$ satisfies a PŁ inequality with $\lambda=1$.

Now, turning our attention to the Gibbs measure $\mu_\infty^N$, we notice the following;

\begin{lemma}\label{lem:marginaltoymodel}
For the model~\eqref{eq:Etoymodel}, if $\bX\sim \mu_\infty^N$, then $\bar X := \frac1N\sum_{i=1}^N X_i$ is distributed according to $\tilde \nu_N \propto \exp(-Nf)$.
\end{lemma}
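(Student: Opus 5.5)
Writing $\mathcal E(\rho)=\frac12\int x^2\rho + R(\rho(x)) + \frac12\ln(2\pi)$ with $\varphi(x)=x$, the Gibbs density is
\[
U_N(\bx)=N\mathcal E(\pi_{\bx}) = \frac12\sum_{i=1}^N x_i^2 + N R(\bar x) + \frac N2\ln(2\pi),\qquad \bar x=\frac1N\sum_i x_i .
\]
The plan is to decompose $\R^N$ into the mean direction and its orthogonal complement. Since $\sum_i x_i^2 = N\bar x^2 + |\bx - \bar x\mathbf 1|^2$, we get
\[
\mu_\infty^N(\bx)\propto \exp\Big(-N\big(R(\bar x)+\tfrac12\bar x^2\big)\Big)\exp\Big(-\tfrac12|\bx-\bar x\mathbf 1|^2\Big)=e^{-Nf(\bar x)}\,e^{-\frac12|P\bx|^2},
\]
where $P$ is the orthogonal projection onto $\mathbf 1^\perp$. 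Thus the density factorises as a function of $\bar x$ times a function of $P\bx$.

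Change variables linearly, and isometrically up to a constant Jacobian, to $(s,\mathbf y)\in\R\times\mathbf 1^\perp$, with $s=\sqrt N\,\bar x$ the coordinate along $\mathbf 1/\sqrt N$ and $\mathbf y=P\bx$. The density in these coordinates is a product $g(s)h(\mathbf y)$ with $g(s)\propto e^{-Nf(s/\sqrt N)}$ and $h$ Gaussian. By Fubini, integrating out $\mathbf y$ gives a finite constant, so the law of $s$ has density $\propto e^{-Nf(s/\sqrt N)}$. Pushing forward by $s\mapsto s/\sqrt N=\bar x$, a linear map with constant Jacobian, the law of $\bar X$ has density $\propto e^{-Nf(\theta)}$ on $\R$.

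It remains to check normalisability, i.e. that $e^{-Nf}\in L^1(\R)$. This follows from the basic integrability assumption $e^{-U_N}\in L^1$ together with Fubini, or directly for the example~\eqref{eq:specifictoy}, where $f$ grows quadratically. Finally, we must make sure this agrees with the paper's $\tilde\nu_N$. Here $\dim\mathbb H=1$ and $\C=I$, and by the discussion above, $\omega$ is the Moreau envelope of $\hF=f$. Then $e^{-N\hat\omega}\gamma_{\C}$, with $\gamma_{\C}=\mathcal N(0,1/N)$, should be compared to $e^{-Nf}$. I expect this identification to be the only delicate point. I would resolve it by noting that the statement explicitly defines the target as $\propto\exp(-Nf)$. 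If consistency with~\eqref{eq:deftildenuN} is required, I would use that $e^{-N\omega}$ is the convolution structure induced by the Gaussian: indeed $\hat\omega$ is the Legendre-type transform making $e^{-N\hat\omega}\gamma_{\C}$ equal to $e^{-Nf}$ for quadratic-plus-$f$ models. The core computation, however, is just the orthogonal splitting above.
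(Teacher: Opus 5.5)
Your core argument is correct and is essentially the paper's proof. The paper rotates $u=N^{-1/2}(1,\dots,1)$ onto $e_1$, uses the rotation invariance of $e^{-|\bx|^2/2}$ and integrates out $x_2,\dots,x_N$. You first complete the square $\sum_i x_i^2=N\bar x^2+|P\bx|^2$ and then perform the same orthogonal splitting.

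You should, however, drop your closing ``Legendre-type'' claim, because it is false. With $\C=I$, the measure $e^{-N\hat\omega}\gamma_{\C}$ has density $\propto e^{-N\omega}$, where $\omega$ is the Moreau envelope of $f$. This is not $f$: for~\eqref{eq:specifictoy} one computes $\omega(\theta)=\frac13(|\theta|-1)_+^2$. The law of $\bar X$ is exactly $\propto e^{-Nf}$. By contrast, the $\theta$-marginal of the two-scale decomposition is that law convolved with $\mathcal N(0,1/N)$, and it is this convolution whose exponential rate is the Moreau envelope.

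So the $\tilde\nu_N$ in the lemma must simply be read as the measure $\propto e^{-Nf}$ named in the statement, not the measure of~\eqref{eq:deftildenuN}. That is exactly what the paper proves, and it is what is used afterwards to compute $\mathbb E(|\bar X|^2)$.
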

\begin{proof}
Writing $\bar x= \frac1N\sum_{i=1}^N x_i = N^{-1/2} \bx \cdot u $ with the unit vector $u=N^{-1/2}(1,\dots,1)$, for any bounded test function $h$, an orthogonal change of variables gives
\begin{align*}
\int_{\R^N} h(\bar x) e^{-N R(\bar x)} e^{-\frac12|\bx|^2} \dd \bx &= \int_{\R^N} h(x_1/\sqrt N) e^{-N R(x_1/\sqrt N)} e^{-\frac12|\bx|^2} \dd \bx \\
& \propto \int_{\R} h(y) e^{-N R(y)} e^{-\frac N2|y|^2} \dd y\,,
\end{align*}
which concludes the proof since $f(\theta)= R(\theta) + \frac12|\theta|^2 $.
\end{proof}

Since a LSI implies a Poincaré inequality with the same constant, namely
\[\lambda_N \leqslant   \inf\left\{ \frac{\mu_\infty^N(|\na g|^2)}{2\mu_\infty^N(g^2)}\,:\, g\in L^2(\mu_\infty^N),\ g \neq \mu_\infty^N(g) = 0  \right\}\,,\]
upper-bounds on $\lambda_N$ are easily obtained with Lemma~\ref{lem:marginaltoymodel} by considering test functions of the form $g(\bx) = h(\bar x)$. For instance in the specific symmetric case~\eqref{eq:specifictoy} we can simply take $g(\bx) = \bar x$, for which $|\na g(\bx)|^2 = 1/N$ for all $\bx$ and
\[\mu_\infty^N(g^2) = \mathbb E \po |\bar X|^2\pf = \frac{\int_{\R}y^2 e^{-Nf(y)}\dd y }{\int_{\R}  e^{-Nf(y)}\dd y } \underset{N\rightarrow \infty}\longrightarrow \frac12 \int_{-1}^1 y^2 \dd y = \frac13\,. \]
As a conclusion, we have obtained the following:

\begin{proposition}\label{prop:counterexample}
For the model~\eqref{eq:Etoymodel} with $R$ given by~\eqref{eq:specifictoy}, the PL and $N$-particle log-Sobolev constants~\eqref{eq:deflambdaPL} and~\eqref{eq:deflambdaNLSI} are such that
\[\lambda \geqslant 1\,,\qquad \lambda_N \leqslant \frac{3}{2N}\po 1+ \underset{N\rightarrow \infty}o(1)\pf\,.  \]
In particular, the equivalence~\eqref{eq:equivalence} does not hold.
\end{proposition}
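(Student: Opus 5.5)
The statement gathers two bounds, and I will prove them separately.

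For the lower bound $\lambda\geqslant 1$, I will apply Lemma~\ref{lem:toymodellemma} to $f(\theta)=(|\theta|-1)_+^2$. This requires a PŁ inequality for $f$, which I check by cases.
\begin{itemize}
\item On $[-1,1]$, $f=\inf f=0$, so the inequality is trivial.
\item For $|\theta|>1$, $|f'(\theta)|^2=4(|\theta|-1)^2=4f(\theta)$, so the inequality holds with $\lambda'=2$.
\end{itemize}
Lemma~\ref{lem:toymodellemma} then gives $\lambda\geqslant\min(1,2)=1$. One caveat: $f$ is only $\mathcal C^1$, whereas the model was stated for $R\in\mathcal C^2$. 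I would check that the proof of \cite[Proposition 4]{Mtoymodel} only uses $\mathcal C^1$. Failing that, I would approximate $f$ by smooth functions satisfying the same PŁ inequality and pass to the limit in $\mathcal F$ and $\mathcal I$.

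For the upper bound on $\lambda_N$, I will use the Poincaré variational characterisation recalled just before the statement. The test function is $g(\bx)=\bar x$, which is centred under $\mu_\infty^N$ by the symmetry $\bx\mapsto-\bx$: $f$ is even, so $R$ is even and $\mu_\infty^N$ is symmetric. Its gradient satisfies $|\na g|^2=\sum_i N^{-2}=1/N$. By Lemma~\ref{lem:marginaltoymodel}, $\mu_\infty^N(g^2)=\int y^2e^{-Nf}\,\dd y/\int e^{-Nf}\,\dd y$. This gives
\[\lambda_N\leqslant \frac{1/N}{2\,\mu_\infty^N(g^2)}.\]

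The main step is the limit $\mu_\infty^N(g^2)\to 1/3$, which I obtain from Laplace-type estimates.
\begin{itemize}
\item The denominator satisfies $\int e^{-Nf}=2+2\int_0^\infty e^{-Nu^2}\,\dd u=2+O(N^{-1/2})$.
\item The numerator is $\int_{-1}^1y^2\,\dd y+2\int_0^\infty(1+u)^2e^{-Nu^2}\,\dd u=2/3+O(N^{-1/2})$.
\end{itemize}
The ratio therefore tends to $1/3$. It follows that $\lambda_N\leqslant \frac{3}{2N}(1+o(1))$, which gives $\liminf\lambda_N=0$ while $\lambda\geqslant1>0$. This contradicts~\eqref{eq:equivalence}.

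Finally, I need the Poincaré-from-LSI step with the paper's normalisation. Linearising the LSI~\eqref{eq:deflambdaNLSI} at $\rho=(1+\varepsilon g)\mu_\infty^N$ gives $\mathcal H\approx\varepsilon^2\mu(g^2)/2$ and $\mathcal I\approx\varepsilon^2\mu(|\na g|^2)$. Hence $\lambda_N\leqslant \mu(|\na g|^2)/\mu(g^2)$. The bound stated in the paper has an extra factor $1/2$, so I would match it carefully against the $2\lambda$ normalisation. If the factor turns out to be $1/N$ over $\mu(g^2)$ instead, the bound becomes $3/N$ rather than $3/(2N)$, but the conclusion $\lambda_N\to0$ is unaffected.
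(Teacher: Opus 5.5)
Your argument is the paper's own. You get the PŁ bound from Lemma~\ref{lem:toymodellemma} with $\lambda'=2$, and the upper bound from the Poincaré test function $g(\bx)=\bar x$ together with Lemma~\ref{lem:marginaltoymodel} and $\mu_\infty^N(\bar x^2)\to 1/3$.

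The genuine issue is the factor $2$, and your closing linearisation is the correct computation. In the normalisation~\eqref{eq:deflambdaNLSI}, an LSI with constant $\lambda_N$ implies $\mathrm{Var}_{\mu_\infty^N}(g)\leqslant \lambda_N^{-1}\mu_\infty^N(|\na g|^2)$. It does not imply the version with $2\mu_\infty^N(g^2)$ in the denominator that is displayed just before the proposition. A quick check: the standard Gaussian has $\lambda=1$ in this normalisation, while $g(x)=x$ plugged into that display would force $\lambda\leqslant 1/2$. So your first display $\lambda_N\leqslant \frac{1/N}{2\mu_\infty^N(g^2)}$, and the conclusion $\frac{3}{2N}$ drawn from it, are unjustified. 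You should delete them rather than leave them beside the linearisation. What your argument, and the paper's, actually proves is $\lambda_N\leqslant \frac{3}{N}(1+o(1))$.

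This is more than bookkeeping: the constant $\frac{3}{2N}$ in the statement is false.
\begin{itemize}
\item By the orthogonal change of variables in the proof of Lemma~\ref{lem:marginaltoymodel}, $\mu_\infty^N$ is the product of a standard Gaussian on $\mathbf 1^\perp$ and the law of $s=\sqrt N\,\bar x$, whose density is proportional to $e^{-(|s|-\sqrt N)_+^2}$. Hence $\lambda_N=\min(1,\ell_N)$, where $\ell_N$ is the LSI constant of this one-dimensional law.
\item Set $a=\sqrt N+\sqrt\pi/2$ and consider the monotone map pushing $\mathcal U[-a,a]*\mathcal N(0,1)$ onto this law. It is $1$-Lipschitz: compare densities in the bulk, where both are flat and $a$ is chosen to make them equal, and hazard rates in the Gaussian tails.
\item The uniform law on $[-a,a]$ has LSI constant $\pi^2/(4a^2)$, and $\mathcal N(0,1)$ has LSI constant $1$. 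By the convolution bound, $\ell_N\geqslant (4a^2/\pi^2+1)^{-1}$. Together with the matching spectral-gap upper bound, this gives $N\lambda_N\to\pi^2/4\approx 2.47>3/2$.
\end{itemize}
The proposition should therefore read $\frac{3}{N}$ (or $\frac{\pi^2}{4N}$ sharply). The qualitative conclusion is unaffected: $\lambda\geqslant 1$ while $\lambda_N\to 0$, so~\eqref{eq:equivalence} still fails.

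Your caveat that $R$ is only $\mathcal C^1$ here is fair. The paper also applies Lemma~\ref{lem:toymodellemma} in that setting without comment.
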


\begin{remark}
It could be tempting to think that a perturbation of this counter-example could also give a counter-example with a unique minimizer of the conjecture from~\cite{Pavliotis} according to which $\lambda = \lim \lambda_N$. For instance, starting from~\eqref{eq:specifictoy}, we could consider $R_\varepsilon(\theta)=R(\theta) + \varepsilon\theta^2$: the minimiser is now unique, but $R_\varepsilon\rightarrow R$ as $\varepsilon\rightarrow 0$. Because of Proposition~\ref{prop:counterexample}, we can think that the liminf of $\lambda_N$ as $N\rightarrow \infty$ can be made arbitrarily small when taking $\varepsilon$ small enough (which is indeed the case), and this would give a counter-example if the PŁ constant of the free energy $\mathcal F_\varepsilon$ associated to $R_\varepsilon$ were bounded below uniformly in $\varepsilon \in(0,1]$, which would be the case if this PŁ constant were converging to the one of $\mathcal F$ associated to $R$. Unfortunately this is not the case: for $\theta\in[-1,1]$, $f_\varepsilon(\theta):=f(\theta) + \varepsilon|\theta |^2 = \varepsilon|\theta|^2$ and $f_\theta'(\theta) = 2\varepsilon\theta$, from which the PŁ constant of $f_\varepsilon$ is smaller than (equal to in fact) $2\varepsilon$.  From \cite[Proposition 4]{Mtoymodel}, the PŁ constant of $\mathcal F_\varepsilon$ is $\min(2\varepsilon,1)$.

In fact there is no hope to design a counter-example to $\lambda=\lim \lambda_N$ with a free energy of the form~\eqref{eq:Etoymodel}, because the conjecture from~\cite{Pavliotis} is true in that case (as soon as the minimiser is unique), as shown in \cite[Proposition 4]{Mtoymodel}.
\end{remark}
 
 \section{Degenerate Łojasiewicz inequalities}\label{sec:degenerate}
 
 We consider the same settings and notations as in Section~\ref{sec:results}.

 For a non-decreasing $\Phi: \R_+ \rightarrow \R_+$, we say that a lower-bounded function $f \in \mathcal C^1(\R^d)$ satisfies a $\Phi$-Łojasiewicz inequality (abbreviated as $\Phi$-ŁI) if
 \[\forall x\in\R^d,\qquad f(x) - \inf f \leqslant \Phi(|\na f(x)|^2)\,.\]
 Necessarily, $\Phi(r)>0$ for $r>0$, since a minimizer of $f$ is   a critical point.  We say that the inequality is tight if $\Phi$ is continuous at $0$ with $\Phi(0)=0$, defective otherwise.
 
 We consider the same settings as in Section~\ref{sec:coarsegrainedIneq}, with some $\mathbb H$, $\C$ and $\varphi=\C\psi$.  Under Assumption~\ref{assum:baseLemme1}, we can consider the same group of inequalities:
 \begin{enumerate}
\item Free energy:   there exists a non-decreasing $\Phi:\R_+\rightarrow\R_+$  such that 
\begin{equation}\label{eq:PLF-dege}
\forall \rho \in \mathcal P_2(E)\text{ with }\mathcal F(\rho)<\infty,\qquad \mathcal F(\rho)-\inf \mathcal F \leqslant \Phi\po \mathcal I(\rho)\pf \,.
\end{equation}
\item Coarse-grained free energy: there exists a non-decreasing $\widehat{\Phi}:\R_+\rightarrow\R_+$   such that
\begin{equation}
\label{eq:PLFhat-dege}
\forall \theta \in \mathbb H,\qquad \hF(m_\theta) - \inf\hF \leqslant \widehat{\Phi}\po  \left|\C^{-1}\na \hF(m_\theta)\right|^2\pf \,.
\end{equation}
\item Renormalised potential: there exists a non-decreasing $\Phi_\omega :\R_+\rightarrow\R_+$ such that 
\begin{equation}
\label{eq:PLomega-dege}
\forall \theta \in \mathbb H,\qquad \omega(\theta) - \inf \omega \leqslant \Phi_\omega \po  |\C \na \omega(\theta)|^2\pf \,.
\end{equation}
\item Renormalised measure: there exists a non-decreasing $\widetilde{\Phi}:\R_+\rightarrow \R_+$ such that 
\begin{equation}\label{eq:ballisticLSI-dege}
\forall \rho \in \mathcal P_2(\mathbb H),\qquad  \frac1N \mathcal H(\rho|\tilde \nu_N) \leqslant \widetilde{\Phi} \po \re{\frac1{N^2}} \mathcal I_{\C}(\rho|\tilde \nu_N)\pf \,.
\end{equation}
\item  Gibbs measure: there exists a non-decreasing $\Phi_\infty:\R_+\rightarrow \R_+$ such that 
\begin{equation}\label{eq:unifLSI-dege}
\forall \rho^N \in \mathcal P_2(\R^{dN}),\qquad  \frac1N \mathcal H(\rho^N|\mu_\infty^N) \leqslant \Phi_\infty\po \frac1N \mathcal I(\rho^N|\mu_\infty^N)\pf \,,
\end{equation}
\end{enumerate}

The generalisation of the results from Section~\ref{sec:results}   reads as follows:

\begin{theorem}\label{thm:intermediaryimplications-dege}
Under Assumption~\ref{assum:baseLemme1}, the following holds:
\begin{enumerate}
\item If $\varphi$ is $\mathcal C^1$, Lipschitz continuous,  the $\Phi$-ŁI~\eqref{eq:PLF-dege} for $\mathcal F$ implies the $\widehat{\Phi}$-ŁI~\eqref{eq:PLFhat-dege} for $\hF$ with $\widehat{\Phi}(r) = \Phi \po \|\na \varphi\|_\infty^2 r \pf $.
\item   The $\widehat{\Phi}$-ŁI~\eqref{eq:PLFhat-dege} for $\hF$ implies the $\Phi_\omega$-ŁI~\eqref{eq:PLomega-dege} for $\omega$ with $\Phi_\omega(r) = \widehat{\Phi}(r)+r$.
\item   If $\omega$ satisfies the $\Phi_\omega$-ŁI~\eqref{eq:PLomega-dege} and is not constant nor Lipschitz, then necessarily $\Phi_\omega(r) > r$ for all $r>0$  and the $\widehat{\Phi}$-ŁI~\eqref{eq:PLFhat-dege} holds with $\widehat{\Phi}(r) = \Phi(r) - r$.
\item Under Assumption~\ref{assum:PL->unifLSI}, there exists a constant $K>0$ which depends only on the parameters $\mathfrak{P}$ in~\eqref{eq:param} such that, for any $N\geqslant 1$ if  the renormalised measures  $\tilde\nu_N$ satisfies the $\widetilde{\Phi}$-ŁI~\eqref{eq:ballisticLSI-dege} for some non-decreasing $\widetilde{\Phi}:\R_+\rightarrow \R_+$, then $\mu_\infty^N$ satisfies the $\Phi_\infty$-ŁI~\eqref{eq:unifLSI-dege} with $\Phi_\infty(r) = \widetilde{\Phi}(Cr) + Cr$.
\item Under Assumption~\ref{assum:unifLSI->PL}, if there exists a non-decreasing $\Phi_\infty:\R_+\rightarrow\R_+$ such that, for all $N\geqslant 1$ large enough, the Gibbs measure $\mu_\infty^N$ satisfies the $\Phi_\infty$-ŁI~\eqref{eq:unifLSI-dege}, then $\mathcal F$ satisfies the $\Phi$-ŁI~\eqref{eq:PLF-dege} with $\Phi = \Phi_\infty$.
\end{enumerate}
\end{theorem}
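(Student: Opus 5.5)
The plan is to follow the non-degenerate proofs (Proposition~\ref{prop:intermediaryimplications}, Theorem~\ref{thm:PL->unifLSI}, Theorem~\ref{thm:unifLSI->PL}) and check that each step only needs monotonicity of the profile, never its linearity. The only structural input for items 1--3 is Lemma~\ref{lem:omegaFhat}. Concretely, I will use that for $\theta\in\mathbb H$ the minimiser $\bmu$ of $\mathcal F_\theta$ has the Gibbs form $\bmu\propto\exp(-\frac{\delta\mathcal E}{\delta m}(\bmu,\cdot)-\varphi\cdot\bmu(\varphi)+\psi\cdot\theta)$. From this form I expect three identities:
\[
\hF(m_\theta)=\mathcal F(\bmu),\qquad
\na\hF(m_\theta)=\theta-\C^2 m_\theta,\qquad
\na\omega(\theta)=\C^{-2}\theta-m_\theta=\C^{-2}\na\hF(m_\theta).
\]
I also expect $\omega(\theta)-\inf\omega=\hF(m_\theta)-\inf\hF+\tfrac12|\C^{-1}\na\hF(m_\theta)|^2$, with $\inf\omega=\inf\hF=\inf\mathcal F$, which comes from the envelope structure.

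\emph{Item 1.} Since $\bmu$ has the Gibbs form above, $\na\ln\bmu+D\mathcal E(\bmu,\cdot)=-\na\psi^{T}(\C^2 m_\theta-\theta)$. Hence $\mathcal I(\bmu)\leqslant\|\na\varphi\|_\infty^2|\C^{-1}\na\hF(m_\theta)|^2$, where the factor $\C$ is absorbed because $\varphi=\C\psi$. Applying~\eqref{eq:PLF-dege} to $\bmu$ and using that $\Phi$ is non-decreasing gives $\widehat\Phi(r)=\Phi(\|\na\varphi\|_\infty^2 r)$.

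\emph{Item 2.} Set $r=|\C\na\omega(\theta)|^2$. The identities give $r=|\C^{-1}\na\hF(m_\theta)|^2$, so the displayed formula for $\omega-\inf\omega$ yields
\[
\omega(\theta)-\inf\omega\leqslant\widehat\Phi(r)+\tfrac12 r\leqslant\widehat\Phi(r)+r .
\]

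\emph{Item 3.} Reading the same identity backwards gives $\hF(m_\theta)-\inf\hF\leqslant\Phi_\omega(r)-r/2$, with the stated profile after adjusting constants. For the claim $\Phi_\omega(r)>r$, note that $\omega-\tfrac12|\C^{-1}\cdot|^2=\hat\omega$ is concave. I will test the inequality along rays $\theta=s u$ with $s\to\infty$, where $\omega$ grows quadratically because it is not Lipschitz. This should show that $\Phi_\omega(r)\leqslant r$ for some $r$ is impossible. I expect this to need some care, but it stays elementary.

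\emph{Item 5.} I will reproduce the proof of Theorem~\ref{thm:unifLSI->PL}. Take $\rho$ of the Gibbs form $\propto\exp(-\frac{\delta\mathcal E}{\delta m}(\nu,\cdot))$ and apply~\eqref{eq:unifLSI-dege} to $\rho^{\otimes N}$. Then use the limits~\eqref{eq:grandeDev}, which hold under Assumption~\ref{assum:unifLSI->PL}. Passing to the limit needs $\Phi_\infty$ to be right-continuous. If it is not, I will replace $\Phi_\infty$ by its right-continuous regularisation, which is harmless by monotonicity, or else get $\Phi(r)=\Phi_\infty(r+)$. Finally, I extend from this class of $\rho$ to general $\rho$ by the same density/lower-semicontinuity argument as in the linear case.

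\emph{Item 4, the main obstacle.} Here I will redo the two-scale decomposition of Section~\ref{sec:proofLSIN}. Write $\mu_\infty^N$ as a mixture over $\theta\sim\nu_N$ of fluctuation measures $\mu^{N,\theta}$, with $\nu_N$ close to $\tilde\nu_N$. The entropy splits as
\[
\mathcal H(\rho^N|\mu_\infty^N)=\mathcal H(\bar\rho|\nu_N)+\int\mathcal H(\rho^{N,\theta}|\mu^{N,\theta})\,\bar\rho(\dd\theta).
\]
The microscopic term is controlled linearly: the flat-convexity-based LSI for $\mu^{N,\theta}$ is uniform in $N$ and $\theta$, and gives the $Cr$ term. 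For the macroscopic term, the linear case controls $\mathcal I_{\C}(\bar\rho|\nu_N)$ by $C N\mathcal I(\rho^N|\mu_\infty^N)$ through the covariance/gradient estimates, and the paper's Lemma~\ref{lem:gammaX} enters exactly here. I will then apply~\eqref{eq:ballisticLSI-dege}, which gives
\[
\tfrac1N\mathcal H(\bar\rho|\nu_N)\leqslant\widetilde\Phi\po N^{-2}\mathcal I_{\C}\pf\leqslant\widetilde\Phi\po C N^{-1}\mathcal I(\rho^N|\mu_\infty^N)\pf .
\]
The difficulty is that the linear proof mixes these two contributions in a Holley--Stroock/convexity-type step, and with a non-linear $\widetilde\Phi$ that step cannot be applied. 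So I must check that the macroscopic Fisher information bound is itself an inequality between Fisher informations that does not use the LSI. I must also handle the discrepancy between $\nu_N$ and $\tilde\nu_N$ by a bounded perturbation with a $\Phi$-compatible correction. This may force $\widetilde\Phi(Cr)$ plus an additive $Cr$ term, or a constant correction that disappears because the perturbation is $O(1)$ while we divide by $N$.
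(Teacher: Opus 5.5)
Items 1 and 2, and the skeleton of item 4, match the paper. In item 4 the paper keeps the microscopic bound \eqref{loc:qfsffa}, applies the $\widetilde\Phi$-inequality to the macroscopic term as in \eqref{loc:vdfgemfa-dege}, and bounds $\int|\C\na_\theta\sqrt F|^2\dd\nu_N$ via \eqref{loc:dmlpaldfdg} and Lemma~\ref{lem:gammaX}. Your worry that the linear proof ``mixes'' the two contributions is unfounded. That Fisher-information bound uses only the microscopic constants $\lambda_{N,\theta}$, never the macroscopic inequality, so monotonicity of $\widetilde\Phi$ is all you need.

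\textbf{The genuine gap is item 3.} Your identity gives, at each $\theta$,
\[
\omega(\theta)-\inf\omega=\hF(m_\theta)-\inf\hF+\frac r2,\qquad r=|\C\na\omega(\theta)|^2=|\C^{-1}\na\hF(m_\theta)|^2 .
\]
This says something about $\Phi_\omega(r)$ only for $r$ in the range of $|\C\na\omega|^2$. Testing along rays with $|\theta|\to\infty$ only explores large $r$ and says nothing at a fixed $r>0$. Moreover, ``not Lipschitz'' does not imply quadratic growth, and the concavity of $\hat\omega$ plays no role.

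The missing idea is the paper's intermediate-value argument. $|\C\na\omega|$ is continuous on the connected space $\mathbb H$, vanishes at a minimiser, and is unbounded because $\omega$ is not Lipschitz. Hence every $r\geqslant0$ is attained. At such a $\theta$ with $r>0$, $m_\theta$ is not critical for $\hF$, so $\Phi_\omega(r)\geqslant\omega(\theta)-\inf\omega>r/2$.

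You also cannot ``adjust constants'' to turn your correct $\Phi_\omega(r)-r/2$ into the stated $\Phi_\omega(r)-r$, or $>r/2$ into $>r$, because those are false. Take the model \eqref{eq:Etoymodel} with $R\equiv0$, $\psi=\varphi=x$, $\C=I$. Then $\hF(m)=m^2/2$, $m_\theta=\theta/2$ and $\omega(\theta)=\theta^2/4$. The optimal profile is $\Phi_\omega(r)=r$, while $\hF(m_\theta)-\inf\hF=r/2$. Items 2 and 3 translate $\frac1{2\lambda''}=\frac1{2\lambda'}+\frac12$ and $\lambda''<1$ from Proposition~\ref{prop:intermediaryimplications}, but with $r/2$ replaced by $r$. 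In item 2 this only weakens the result. In item 3, and in the paper's remark that this ``forces $\Phi_\omega(r)>r$'', it makes the claim false. Prove instead $\Phi_\omega(r)>r/2$ and $\widehat\Phi(r)=\Phi_\omega(r)-r/2$.

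\textbf{Item 4: the passage from $\tilde\nu_N$ to $\nu_N$.} This cannot be done additively. The two entropies differ by up to $a=\|\frac{\delta^2\mathcal E_c}{\delta m^2}\|_\infty$, which leaves a defect at fixed $N$. The two Fisher informations differ through $N\na(\hat\omega_N-\hat\omega)$, on which Lemma~\ref{lem:omegaNtoomega} gives no control.

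Do it multiplicatively, in Holley--Stroock style:
\begin{enumerate}
\item Write $\mathrm{Ent}_\mu(F)=\inf_{c>0}\int(F\ln\frac Fc-F+c)\dd\mu$, whose integrand is non-negative.
\item Renormalise by $c_0=\int F\dd\tilde\nu_N\in[e^{-a},e^{a}]$.
\item Use the $1$-homogeneity of entropy and of $\int|\C\na\sqrt F|^2$.
\end{enumerate}
This gives $\frac1N\mathcal H(\cdot|\nu_N)\leqslant e^{a}\widetilde\Phi(e^{a}N^{-2}\mathcal I_{\C}(\cdot|\nu_N))$, so the conclusion takes the form $K\widetilde\Phi(Kr)+Kr$. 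The paper's sketch applies the $\widetilde\Phi$-inequality directly to $\nu_N$ in \eqref{loc:vdfgemfa-dege}, so this prefactor is missing there as well.

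\textbf{Item 5.} The detour through Gibbs-form $\rho$ plus a density argument is unnecessary, and it is not justified as stated, since the Fisher information is only lower semicontinuous. For any $\rho$ with $\mathcal I(\rho)<\infty$ you only need $\limsup_N(-\frac1N\ln\int e^{-U_N})\leqslant\inf\mathcal F$. This follows from $\mathcal H(\mu_*^{\otimes N}|\mu_\infty^N)\geqslant0$ at a minimiser $\mu_*$ (which has Gibbs form, so \eqref{eq:CVE} applies), without any functional inequality. Add lower semicontinuity and Fatou for $\int\mathcal E(\pi_{\bx})\rho^{\otimes N}$ and you are done.

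The two-sided limits \eqref{eq:grandeDev} are not supplied by Assumption~\ref{assum:unifLSI->PL}. The paper derives the limit of $\frac1N\ln\int e^{-U_N}$ from the LSI itself, and in the degenerate case that would require $\Phi_\infty$ to be tight. Your right-continuity caveat, $\Phi(r)=\lim_{s\downarrow r}\Phi_\infty(s)$, is correct; the paper's ``mutatis mutandis'' glosses over it.
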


In Theorem~\ref{thm:intermediaryimplications-dege}, we haven't stated the analogue of Theorem~\ref{thm:ChewiStromme} (hence we don't have the complete analogue of the chain of implications~\eqref{eq:chaineimplications} in the degenerate case). Such a result would be nice, but establishing it exceeds the scope of the present work. Indeed, extending the arguments of~\cite{chewi2024ballistic} to the degenerate case doesn't seem straightforward (in particular, in the degenerate case, the Hessian of $\omega$ is singular at the minimiser, and the order of degeneracy may depend on the direction). Instead, we recall  \cite[Proposition 16]{Mtoymodel}, which provides a tractable way to establish a Low-temperature Łojasiewicz-LSI in degenerate convex cases.

\begin{proposition}[from~{\cite[Proposition 16]{Mtoymodel}}] \label{prop:critereconvexdege}
Under Assumption~\ref{assum:finitedim}, assume that  there exists $\beta\geqslant 2$ and $c_1,c_2>0$  such that, writing $\kappa(\theta) = \min(c_1,c_2|\C^{-1} \theta|^{\beta-2})$, for all $\theta,\zeta\in\mathbb H$,
\begin{equation}
\label{eq:convexiteomega-dege}
\forall t\in[0,1],\qquad t \omega(\theta) + (1-t) \omega(\zeta) \geqslant \omega (t\theta + (1-t)\zeta)  +   \kappa(\theta,\zeta) t(1-t) |\C^{-1} ( \theta-\zeta)|^2\,.
\end{equation}
Then there exists $C>0$ (depending only and explicitly on $c_1,c_2,\beta$ and $\dim \mathbb H$) such that for all $N\geqslant 1$, $\tilde \nu_N$ satisfies the $\widetilde{\Phi}$-ŁI~\eqref{eq:ballisticLSI-dege} with $\widetilde{\Phi}(r)  = C \min (r, r^{\frac{\beta}{2\beta-2}})$.
\end{proposition}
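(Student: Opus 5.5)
The statement is Proposition~\ref{prop:critereconvexdege}, imported from \cite[Proposition 16]{Mtoymodel}. I would first reduce to $\C=I$ and then prove a Łojasiewicz-type entropy inequality for the Gibbs measure $\tilde\nu_N\propto e^{-N\omega}$ (with respect to Lebesgue measure after the change of variables) through a degenerate Bakry--Émery argument.

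\textbf{Reduction to $\C=I$.} Set $\tilde\omega(\eta)=\omega(\C\eta)$. Condition~\eqref{eq:convexiteomega-dege} becomes ordinary convexity of $\tilde\omega$ with modulus $\kappa\,t(1-t)|\eta-\eta'|^2$, where $\kappa=\min(c_1,c_2|\eta|^{\beta-2})$. The Fisher information $\mathcal I_\C$ becomes the standard one for the image measure. So I assume $\C=I$.

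\textbf{Local curvature and localisation.} Convexity with this modulus gives $\na^2\omega(\theta)\geqslant 2\min(c_1,c_2|\theta|^{\beta-2})$ in the sense of distributions, and the minimiser is $0$. Growth follows:
\[
\omega(\theta)-\omega(0)\gtrsim \min\big(|\theta|^2,|\theta|^\beta\big)\quad\text{near }0,
\]
and at least linear growth at infinity. Hence $\tilde\nu_N$ concentrates on a ball of radius $r_N\sim N^{-1/\beta}$. On the region $|\theta|\geqslant r_N$ the curvature is at least $c_2 r_N^{\beta-2}\sim N^{-(\beta-2)/\beta}$. Multiplied by $N$, this gives an effective Bakry--Émery constant of order $N^{2/\beta}$, and the same order holds on the ball $|\theta|\leqslant r_N$ by a bounded-perturbation argument.

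\textbf{Combining into the $\widetilde\Phi$ inequality.} I would use the standard route for degenerate convex potentials: convexity yields the HWI inequality for $\tilde\nu_N$,
\[
\mathcal H(\rho|\tilde\nu_N)\leqslant \mathcal W_2(\rho,\tilde\nu_N)\sqrt{\mathcal I(\rho|\tilde\nu_N)}.
\]
This must be paired with a transport--entropy bound of type
\[
\mathcal W_2^2\lesssim N^{-2/\beta}\,\mathcal H/N+(\mathcal H/N)^{2/\beta},
\]
which I would obtain from the polynomial growth through a weighted Talagrand / Bolley--Villani estimate. Substituting and optimising gives $\mathcal H/N\lesssim \max(\mathcal I/N^2,(\mathcal I/N^2)^{\beta/(2\beta-2)})$ after rescaling. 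The linear branch is the one relevant for large $\mathcal I/N^2$, where the global constant $c_1$ dominates. Taking the minimum of the two expressions with the right constant $C$ then gives $\widetilde\Phi(r)=C\min(r,r^{\beta/(2\beta-2)})$.

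\textbf{Main obstacle.} The hard step is obtaining these bounds uniformly in $N$ with only dimension-dependent constants. In particular, the transport inequality with the correct $N$-scaling at both scales is the delicate part, and it is where the dependence on $\dim\mathbb H$ enters, through the entropy of the local ball.
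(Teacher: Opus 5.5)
Your reduction to $\C=I$ via $\eta\mapsto\omega(\C\eta)$ is exactly what the paper does. The paper then simply quotes \cite[Proposition 16]{Mtoymodel} for the case $\C=I$. Everything after your reduction is therefore your own re-derivation of that result, and it has two genuine problems. Below, $h=\mathcal H(\rho|\tilde\nu_N)/N$, $r=\mathcal I_{\C}(\rho|\tilde\nu_N)/N^2$, and $\mathcal W_2$ is taken with respect to the norm $|\C^{-1}\cdot|$.

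\emph{The final step.} Your computation ends with $h\lesssim\max(r,r^{\beta/(2\beta-2)})$, and no choice of $C$ turns this $\max$ into a $\min$. You should not try to: for $\beta>2$ the $\min$ version is false.
\begin{itemize}
\item Since $\min(r,r^{\beta/(2\beta-2)})\leqslant r$, the $\min$ version would give the ballistic LSI $\mathcal H(\rho|\tilde\nu_N)\leqslant \frac CN\mathcal I_{\C}(\rho|\tilde\nu_N)$, hence a spectral gap of order $N$.
\item If $\omega(\C\eta)$ behaves like $|\eta|^\beta$ near its minimiser, the test function $e\cdot\C^{-1}\theta$ has Dirichlet energy $|e|^2$ but variance of order $N^{-2/\beta}$ under $\tilde\nu_N$. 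So the gap is at most of order $N^{2/\beta}\ll N$.
\end{itemize}
The $\min$ in the statement (and in the introduction) must be a misprint for $\max$. This fits the paper's remark that only $\beta=2$ is ballistic, and its claim of merely polynomial rates at criticality, where $\lambda=0$. The $\max$ form you reached is the right target.

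\emph{The transport--entropy step.} This step carries your argument, and as written it is false.
\begin{itemize}
\item Counterexample: take $\omega$ quadratic at infinity (as renormalised potentials are, since $\omega-\frac12|\C^{-1}\cdot|^2$ is concave by~\eqref{eq:defomega}). Take $\rho$ concentrated near $\theta_0$ with $|\C^{-1}\theta_0|=d\gg1$. Then $h\asymp d^2\asymp\mathcal W_2^2(\rho,\tilde\nu_N)$, while your right-hand side is of order $N^{-2/\beta}d^2+d^{4/\beta}=o(d^2)$.
\item Fed into HWI, your bound would even give $h\lesssim\max(N^{-2/\beta}r,r^{\beta/(2\beta-2)})$, which fails for such $\rho$.
\item The correct bound is $\mathcal W_2^2\lesssim h+h^{2/\beta}$. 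It is precisely the term $h$ that produces the linear branch; your remark that ``$c_1$ dominates'' is not encoded in any of your inequalities.
\item Bolley--Villani cannot supply this. Any bound $\mathcal W_2^2\leqslant K(\mathcal H+\sqrt{\mathcal H})$ forces $K\gtrsim N^{-2/\beta}$: shift $\tilde\nu_N$ by $N^{-1/\beta}$ in the degenerate region, which costs entropy of order one. Its linear term is then $N^{1-2/\beta}h$, far too large. Its exponent $1/2$ also matches $2/\beta$ only when $\beta=4$.
\end{itemize}

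\emph{A route that works.} Use the hypothesis itself.
\begin{itemize}
\item At $t=1/2$, \eqref{eq:convexiteomega-dege} says that $V=N\omega$ satisfies $V(\theta)+V(\zeta)-2V(\frac{\theta+\zeta}{2})\geqslant c(\theta,\zeta):=\frac N2\kappa(\theta,\zeta)|\C^{-1}(\theta-\zeta)|^2$.
\item Pr\'ekopa--Leindler (Maurey's property $(\tau)$), Jensen and Kantorovich duality then give $\inf_\pi\int c\,\dd\pi\leqslant\mathcal H(\rho|\tilde\nu_N)$.
\item Assume $\kappa(\theta,\zeta)|u|^2\geqslant m(u):=\min(c_1|u|^2,c_2|u|^\beta)$ for $u=\C^{-1}(\theta-\zeta)$; this is the natural reading of the two-point modulus, up to constants. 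The elementary bound $|u|^2\leqslant m(u)/c_1+(m(u)/c_2)^{2/\beta}$ and Jensen give $\mathcal W_2^2\leqslant 2h/c_1+(2h/c_2)^{2/\beta}$.
\item Since $N\omega$ is convex, HWI gives $h^2\leqslant\mathcal W_2^2\,r$, hence $h\leqslant C\max(r,r^{\beta/(2\beta-2)})$ with $C$ depending only on $c_1,c_2,\beta$.
\end{itemize}
Your localisation paragraph is then unnecessary. On its own it only yields $\tilde\lambda_N\gtrsim N^{2/\beta}$, i.e.\ $h\lesssim N^{1-2/\beta}r$, which is not uniform in $N$.
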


Only the case $\C=I$ is considered in \cite{Mtoymodel}, but is suffices to apply it to the preconditioned potential $\theta\mapsto \omega(\C\theta)$. The exponent $\frac{\beta}{2\beta-2}$ is sharp, as further discussed in~\cite{Mtoymodel}. For $\beta=2$, we retrieve the ballistic LSI~\eqref{eq:ballisticLSI}$+$\eqref{eq:ballisticlambda_N}.

\section{Proof for the uniform LSI}\label{sec:proofLSIN}

From~\eqref{eq:energy-quadra}, we decompose
\[U_N(\bx) = V_N(\bx) -  \frac 1{2N} \left|\sum_{i=1}^N\varphi(x_i)\right|^2\,\]
with
\[V_N(\bx) = \sum_{i=1}^N V(x_i) + N \mathcal E_c(\pi_{\bx})\,.\]
To decompose the Gibbs measure $\mu_\infty^N\propto \exp(-U_N)$, recalling $\rp = \C^{-1}\varphi$, we use that 
\begin{equation}
\label{eq:LaplaceGaussienne}
\mathbb E_{\gamma_{\C}} \po \exp\po \Theta\cdot  \sum_{i=1}^N \rp(x_i)\pf \pf  =  \exp\po \frac1{2N} \left| \sum_{i=1}^N \varphi(x_i)\right|^2\pf 
\end{equation}
where we recall that $\gamma_{\C}$ is  the centered Gaussian measure on $\mathbb H$ with covariance $\frac1N \C^2$. Hence,
\begin{equation}
\label{decomposition}
\int_{\R^{dN}} f(\bx) \mu_\infty^N(\dd \bx) = \int_{\mathbb H} \int_{\R^{dN}} f(\bx) \mu_{\theta}^N(\bx) \dd \bx \nu_N(\dd \theta)\,,
\end{equation}
where the microscopic measure is 
\begin{equation}
    \label{eq:def-mutheta-N}
    \mu_\theta^N(\bx) = \frac{1}{Z_N(\theta)} \exp\po - V_N(\bx) + \theta \cdot \sum_{i=1}^N \rp(x_i) \pf 
\end{equation}
with $Z_N(\theta)$ the normalizing constant and \re{the macroscopic fluctuation measure
\begin{equation}
\label{eq:nuNfluctuation}
\nu_N(\dd \theta) \propto  Z_N(\theta) \gamma_{\C}(\dd \theta)\,,
\end{equation}
 which makes sense since by Fubini and~\eqref{eq:LaplaceGaussienne}, 
\begin{equation}
\label{loc:sdfgdfgqmp}
\mathbb E_{\gamma_{\C}}\po  Z_N(\Theta)\pf  = \int_{E^N} e^{-U_N} <\infty\,. 
\end{equation}
}
We write
\begin{equation}
\label{eq:nuNfluctuation1}
 \hat \omega_N(\theta)  = - \frac{1}{N} \ln Z_N(\theta)\,,
\end{equation}
and, similarly to~\eqref{eq:defomega},
\begin{equation}
\label{eq:nuNfluctuation2}
 \omega_N(\theta)  = \hat \omega_N(\theta) + \frac12 |\C^{-1}\theta|^2\,.
\end{equation}
Notice that, when $\dim \mathbb H<\infty$, $\nu_N \propto e^{-N\omega_N}$.

The next lemma is an adaptation of \cite[Proposition 4.5]{Dagallier} in our slightly more general results.

\begin{lemma}\label{lem:microscopicLSI}
Under Assumption~\ref{assum:PL->unifLSI},   there exists $c,N_0>0$, depending only on the parameters $\mathfrak{P}$ in~\eqref{eq:param}, such that, for any $\theta=(\theta_\ell,\theta_b)\in\mathbb H$, $\mu_{\theta}^N$ satisfies a LSI with constant $\lambda_{N,\theta}$ satisfying
\[\frac{1}{\lambda_{N,\theta}}\leqslant \left\{\begin{array}{ll}
c e^{c |\theta_b|} & \quad \forall N \geqslant N_0 e^{c|\theta_b|} \\
c e^{c(N + |\theta_b|)} & \quad \forall N\geqslant 1.
\end{array}\right.\]

\end{lemma}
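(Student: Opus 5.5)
Write the microscopic measure as a product-type measure with a flat-convex mean-field interaction:
\[\mu_\theta^N(\bx)\propto \exp\Big(-\sum_{i=1}^N V_\theta(x_i) - N\mathcal E_c(\pi_{\bx})\Big),\qquad V_\theta = V - \theta\cdot\rp .\]
Splitting $\theta\cdot\rp = \theta_\ell\cdot\rp_\ell + \theta_b\cdot\rp_b$, the term $V_c - \theta_\ell\cdot \rp_\ell$ is still $\kappa_c$-strongly convex, since $\rp_\ell$ is linear. The term $V_L$ is Lipschitz, and $V_b - \theta_b\cdot\rp_b$ is bounded by $M_b+|\theta_b|M_\psi$. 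Hence the one-particle reference measure $\propto e^{-V_\theta}$ satisfies an LSI uniformly in $\theta_\ell$: Bakry--Émery gives the strongly convex part, a Lipschitz perturbation (Aida--Shigekawa, or Miclo's bound) handles $V_L$ at a constant depending on $L_\ell,\kappa_c$, and Holley--Stroock handles the bounded part at a cost $e^{c(M_b+M_\psi|\theta_b|)}$. On $\T^d$ only the Holley--Stroock step is needed. This is the origin of the factor $e^{c|\theta_b|}$.

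For the second (crude, $N$-dependent) bound, note that $N\mathcal E_c(\pi_{\bx})$ is bounded by $NM_c$. By tensorization, the product measure $\otimes e^{-V_\theta}$ satisfies an LSI with the one-particle constant. Holley--Stroock with oscillation $2NM_c$ then gives $\lambda_{N,\theta}^{-1}\le c e^{c(N+|\theta_b|)}$ for every $N\ge1$.

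The first bound is the genuine uniform-in-$N$ one. Here I would invoke the flat-convex uniform LSI results of \cite{SongboLSI} (as done in \cite[Proposition 4.5]{Dagallier}). These apply to Gibbs measures of $\mathcal G(\mu)=\mu(V_\theta)+\mathcal E_c(\mu)$ with $\mathcal E_c$ flat-convex, and they require three things: bounded second flat derivative, so that $\na_x\frac{\delta^2\mathcal E_c}{\delta m^2}$ and $D^2\mathcal E_c$ are bounded by $M_c$; a uniform LSI for the local equilibria $\propto \exp(-V_\theta - \frac{\delta\mathcal E_c}{\delta m}(\mu,\cdot))$; and $N$ larger than a threshold depending on these constants.

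The second requirement follows from the one-particle step, with one more Holley--Stroock perturbation of size $2M_c$ since $\frac{\delta \mathcal E_c}{\delta m}$ is bounded. This gives a one-particle constant $\lambda_0^{-1}\le c e^{c|\theta_b|}$. The result of \cite{SongboLSI} then gives an LSI constant comparable to $\lambda_0$ provided $N\ge N_0'(\lambda_0^{-1},M_c)$, typically polynomial in $\lambda_0^{-1}$. After enlarging $c$, this threshold becomes $N\ge N_0e^{c|\theta_b|}$.

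The main obstacle is precisely tracking this threshold and the constant in \cite{SongboLSI}, whose proof goes through a defective LSI plus an $N$-dependent error of order $M_c\lambda_0^{-1}/N$. One must check that all dependencies are only through $\mathfrak P$ and $|\theta_b|$, and never through $\theta_\ell$ or $\dim\mathbb H$. Independence from $\theta_\ell$ relies on the linear tilt $\theta_\ell\cdot\rp_\ell$ preserving strong convexity and not altering any bound. If the threshold comes out as $N\ge C\lambda_0^{-p}$, we simply absorb $p$ into $c$.
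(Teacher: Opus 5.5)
Your proposal is correct and follows essentially the paper's route. The uniform-in-$N$ bound comes from Songbo Wang's flat-convex uniform LSI, which the paper applies through \cite[Corollary 3]{M61}, after checking by Bakry--\'Emery, Aida--Shigekawa and Holley--Stroock that the local equilibria satisfy an LSI with constant $ce^{c|\theta_b|}$ uniformly in $\theta_\ell$. The crude bound comes from tensorization plus Holley--Stroock with oscillation of order $N\|\mathcal E_c\|_\infty$. The paper also explicitly checks an LSI with constant $ce^{c|\theta_b|}$, uniform in $N$ and in the other particles, for the one-particle conditional laws $x_1\mapsto \exp(-N\mathcal E_\theta(\pi_{\bx}))$; you leave this implicit, but it follows from your local-equilibrium bound by an Aida--Shigekawa log-Lipschitz perturbation, since $\na_x\frac{\delta^2\mathcal E_c}{\delta m^2}$ is bounded.
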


\begin{proof}
Under Assumption~\ref{assum:PL->unifLSI}, the first point is a consequence of  \cite[Theorem 1]{SongboLSI}. Actually, we will apply \cite[Corollary 3]{M61}  (which is based on the former) where, in the flat-convex case, the conditions are stated in a  slightly different way.  
\begin{itemize}
\item The energy $\rho \mapsto \mathcal E_\theta(\rho)= \mathcal E_0(\rho) - \theta\cdot \rho(\rp)$ is flat-convex (since $\rho \mapsto \rho(V) - \rho(\rp)$ is linear).
\item The flat derivatives of $\mathcal E_\theta$ are
\[
\frac{\delta \mathcal E_\theta}{\delta m}(\nu,x) = V(x) + \frac{\delta \mathcal E_c}{\delta m}(\nu,x) - \theta\cdot \rp(x)\,,\qquad
\frac{\delta^2 \mathcal E_\theta}{\delta m^2}(\nu,x) = \frac{\delta^2 \mathcal E_c}{\delta m^2}(\nu,x)\,.
\]
The conditions of Assumption~\ref{assum:PL->unifLSI} implies that these functions are $\mathcal C^2$ in $x$, and that $\na_x \frac{\delta^2 \mathcal E_\theta}{\delta m^2}$ and   $D^2\mathcal E_\theta $ are bounded (independently from $\theta$).
\item For any $\nu \in \mathcal P_2(E)$, $\frac{\delta \mathcal E_\theta}{\delta m}(\nu,\cdot)$ is the sum of    $V_c - \theta_\ell \cdot \rp_\ell$ (strongly convex with a constant independent from $\theta$), $V_\ell$ (Lipschitz continuous) and $V_b -  \theta_b \cdot \rp_b + \frac{\delta \mathcal E_c}{\delta m}(\nu,\cdot)$ (bounded by $\|V_b\|_\infty+|\theta_b|\|\rp_b\|_\infty+\|\frac{\delta \mathcal E_c}{\delta m}\|_\infty$). By the classical Bakry-Emery, Holley-Stroock and Aida-Shigekawa criteria for LSI~\cite{BakryGentilLedoux,HolleyStroock,AidaShigekawa,cattiaux2022functional}, we get that there exists $c>0$, independent from $\nu$ and $\theta$, such that $\mu \propto \exp(-\frac{\delta \mathcal E_\theta}{\delta m}(\nu,\cdot))$ satisfies a LSI with constant $c e^{c |\theta_b|}$.

Moreover, since $\na_x  \frac{\delta^2 \mathcal E_c}{\delta m^2}$ is bounded (independently from $\theta$), as discussed in \cite[Section 4]{SongboLSI},  for any $N\geqslant 2$ and $\bx_{\neq 1}=(x_i)_{i\in\cco 2,N\ccf}$, the probability measure $m_{\bx_{\neq 1}}$ with density proportional to $x_1 \mapsto \exp(-N\mathcal E_\theta(\pi_{\bx}))$ is obtained from $x_1\mapsto \exp(-\frac{\delta \mathcal E_\theta}{\delta m}(\pi_{\bx_{\neq 1}}))$ by a log-Lipschitz perturbation with Lipschitz constant independent from $N$ and $\bx_{\neq 1}$. As a consequence, by the Aida-Shigekawa criterion, there exists $c>0$ such that $m_{\bx_{\neq 1}}$ satisfies a LSI with constant $c e^{c|\theta_b|}$ for all $N\geqslant 2$ and $\bx_{\neq 1} \in E^{N-1}$. 
\end{itemize}

All these conditions show that \cite[Corollary 3]{M61} applies and gives for $\mu_\theta^N$ as LSI with constant $c e^{c|\theta_b|}$ for all $N \geqslant c e^{c|\theta_b|}$ for some constant $c$ depending only on the parameters $\mathfrak{P}$, which is the first statement of the lemma.

 For the second one, we see that
\[V_N(\bx) - \theta \cdot \sum_{i=1}^N \rp(x_i) = \sum_{i=1}^N \co V(x_i) - \theta_\ell \cdot \rp_\ell(x_i) - \theta_b \cdot \rp_b(x_i)\cf + N \mathcal E_c(\pi_{\bx})  \,. \]
The potential $V - \theta_\ell \cdot \rp_\ell - \theta_b \cdot \rp_b $ is the sum of $V_c - \theta_\ell \cdot \rp_\ell$ (strongly convex with a constant independent from $\theta$), $V_\ell$ (Lipschitz continuous) and $V_b -  \theta_b \cdot \rp_b$ (bounded by $\|V_b\|_\infty+|\theta_b|\|\rp_b\|_\infty$). Again, by the Bakry-Emery, Holley-Stroock and Aida-Shigekawa criteria, and the tensorisation property of LSI, the measure with density proportional to
\[\exp\po -\sum_{i=1}^N \co V(x_i) - \theta_\ell \cdot \rp_\ell(x_i) - \theta_b \cdot \rp_b(x_i)\cf \pf\]
satisfies a LSI with constant $c e^{c(1+|\theta_b|)}$ for some $c>0$ depending only on $\mathfrak{P}$. Then, by the Holley-Stroock result,  $\mu_\theta^N$ satisfies a LSI with constant $c e^{c(1+|\theta_b|)+N\|\mathcal E_c\|_\infty}$, which concludes the proof.
\end{proof}

The fact that the bounds in Lemma~\ref{lem:microscopicLSI} are not uniform in $N$ and $\theta$ will not be a problem, as we will need an integrated bound as follows:

\begin{lemma}\label{lem:gammaX}
Under Assumption~\ref{assum:PL->unifLSI}, there exists $M>0$, depending only on the parameters $\mathfrak{P}$, such that for all $N\geqslant 1$ and  $\bx\in E^N$, denoting by $\gamma_{\bx}$ the Gaussian density on $\mathbb H$ with mean $\pi_{\bx}(\rp)$ and variance $\frac1N \C^2$,
\[\mathbb E_{\gamma_{\bx}} \po \frac{1}{\lambda_{N,\Theta}^2}\pf  \leqslant M\,.\]
\end{lemma}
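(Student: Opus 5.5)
Our plan is to combine the two bounds of Lemma~\ref{lem:microscopicLSI} with Gaussian concentration of $\Theta_b$ under $\gamma_{\bx}$. The key point is that only the bounded component $\theta_b$ enters the bounds. Under $\gamma_{\bx}$, write $\Theta = \pi_{\bx}(\rp) + G$ with $G\sim\mathcal N(0,N^{-1}\C^2)$. Since $\rp_b$ is bounded by $M_\psi$, we have $|\Theta_b| \leqslant M_\psi + |G|$, where we use the norm of the full $G$ to dominate its $\mathbb H_b$ component. The quantity we need is then
\[
\mathbb E\po \lambda_{N,\Theta}^{-2}\pf \leqslant \mathbb E\po \lambda_{N,\Theta}^{-2}\1_{N\geqslant N_0 e^{c|\Theta_b|}}\pf + \mathbb E\po \lambda_{N,\Theta}^{-2}\1_{N< N_0 e^{c|\Theta_b|}}\pf ,
\]
and we bound the two terms using respectively the first and second bound of Lemma~\ref{lem:microscopicLSI}.

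\emph{Good event.} On this event $\lambda_{N,\Theta}^{-2}\leqslant c^2 e^{2c|\Theta_b|}\leqslant c^2e^{2cM_\psi}e^{2c|G|}$. So we need a dimension-free bound on $\mathbb E e^{a|G|}$. Write $G=N^{-1/2}\C Z$ with $Z$ standard Gaussian. Then $\mathbb E|G|^2 = N^{-1}\mathrm{Tr}(\C^2)\leqslant T_{\C}$. Also, $|G|$ is a $N^{-1/2}\|\C\|_{op}$-Lipschitz function of $Z$, with $\|\C\|_{op}^2\leqslant T_{\C}$. Gaussian concentration (dimension-free, via finite-dimensional projections as in Remark~\ref{rem:diminfini}) therefore gives
\[
\mathbb E e^{a|G|}\leqslant \exp\po a\sqrt{T_{\C}/N}+a^2T_{\C}/(2N)\pf \leqslant e^{a\sqrt{T_{\C}}+a^2T_{\C}/2},
\]
uniformly in $N\geqslant1$. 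This term is therefore bounded by a constant depending only on $\mathfrak P$.

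\emph{Bad event.} Here we use $\lambda_{N,\Theta}^{-2}\leqslant c^2 e^{2c(N+|\Theta_b|)}$. The bad event requires $|\Theta_b|> c^{-1}\ln(N/N_0)$, hence $|G|> c^{-1}\ln(N/N_0)-M_\psi=:r_N$. For $N\leqslant N_1$, with $N_1$ depending only on $\mathfrak P$, we simply bound $e^{2cN}\leqslant e^{2cN_1}$ and use the exponential moment above. For large $N$, we apply Cauchy--Schwarz:
\[
\mathbb E\po e^{2c(N+|\Theta_b|)}\1_{|G|>r_N}\pf\leqslant e^{2c(N+M_\psi)}\,\mathbb E\po e^{4c|G|}\pf^{1/2}\,\mathbb P(|G|>r_N)^{1/2}.
\]
Concentration gives $\mathbb P(|G|>r_N)\leqslant \exp\po -N(r_N-\sqrt{T_{\C}/N})_+^2/(2T_{\C})\pf$. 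Since $r_N\sim c^{-1}\ln N$, this behaves like $\exp(-c' N\ln^2 N)$, which beats $e^{2cN}$. The product is therefore bounded uniformly in $N$. Collecting the terms gives $M$.

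\emph{Main obstacle.} The delicate step is the bad event: the crude bound grows like $e^{cN}$, and it is only the Gaussian tail at scale $N^{-1/2}$, evaluated at the logarithmically growing threshold $r_N$, that compensates it. One has to check that every constant (the threshold $N_1$, $c'$) depends on $\C$ only through $T_{\C}$, and not on $\dim\mathbb H$. This is why we use concentration of the norm $|G|$, with Lipschitz constant $\|\C\|_{op}$, rather than coordinatewise bounds.
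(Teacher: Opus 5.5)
Your proof is correct and uses the same two ingredients as the paper: the two regimes of Lemma~\ref{lem:microscopicLSI}, and a dimension-free Gaussian tail for $\Theta_b$ controlled only by $\mathrm{Tr}(\C^2)$. The difference is where you cut.

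The paper truncates at a fixed level $A$, independent of $N$. On $\{|\Theta_b|\leqslant A\}$ the integrand is bounded by a constant outright: the first bound applies if $N\geqslant N_0e^{cA}$, and the second bound with $N<N_0e^{cA}$ applies otherwise. On $\{|\Theta_b|>A\}$ only the crude bound $c^2e^{2c(N+|\Theta_b|)}$ is used, integrated against a tail of the form $e^{-Nr^2/(K\,\mathrm{Tr}(\C^2))}$. Taking $A^2$ large compared with $c\,\mathrm{Tr}(\C^2)$, the factor $e^{-NA^2/(K\,\mathrm{Tr}(\C^2))}$ absorbs $e^{2cN}$. This needs no exponential moment on the good event, no Cauchy--Schwarz, and no separate small-$N$ regime.

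Your split along $\{N\geqslant N_0e^{c|\Theta_b|}\}$ is equally valid but heavier. It also shows that the obstacle you single out is less delicate than you suggest. Because the fluctuations live at scale $N^{-1/2}$, the Gaussian tail at any fixed level already has an exponent linear in $N$. So the logarithmically growing threshold $r_N$, which gives $N\ln^2 N$, is more than you need.

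On the plus side, you use concentration for the $N^{-1/2}\|\C\|_{op}$-Lipschitz function $|G|$, with $\|\C\|_{op}^2\leqslant\mathrm{Tr}(\C^2)\leqslant T_{\C}$. This makes the dependence on $\mathfrak P$ fully explicit. The paper's display writes $\mathrm{Tr}(\C)$ where $\mathrm{Tr}(\C^2)$ is meant.
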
 
\begin{proof}
Let $A>0$, to be chosen large enough later on. Thanks to Lemma~\ref{lem:microscopicLSI}, distinguishing whether $N \lessgtr N_0 e^{cA}$,
\[\mathbb E_{\gamma_{\bx}} \po \frac{1}{\lambda_{N,\Theta}^2}\1_{|\Theta_b|\leqslant A} \pf     \leqslant c^2 e^{2cA} + c^2e^{2c N_0 e^{cA}  + cA} \,.\]
Taking $A>2\|\rp_b\|_\infty$, we get that $|\theta_b|>r$ implies that $|\theta_b - \pi_{\bx}(\rp_b)|> r/2$ for all $r\geqslant A$, from which, by Gaussian concentration (see e.g. \cite[Equation (3.5)]{ledoux1991isoperimetry}),
\[\mathbb P_{\gamma_\bx} \po |\Theta_b|>r \pf   \leqslant 4 \exp\po - \frac{Nr^2}{8\mathrm{Tr}(\C) } \pf\,.  \]
Using again Lemma~\ref{lem:microscopicLSI},
\begin{align*}
\mathbb E_{\gamma_{\bx}} \po \frac{1}{\lambda_{N,\Theta}^2}\1_{|\Theta_b|> A} \pf    &  \leqslant c^2 e^{2cN} \mathbb E_{\gamma_{\bx}} \po e^{2c|\Theta_b|}\1_{|\Theta_b|> A} \pf  \\
& =  2c^3 e^{2cN}  \int_{A}^\infty \mathbb P \po  |\Theta_b| \geqslant r \pf   e^{2cr} \dd r \\
& \leqslant 8c^3 e^{2cN} \int_{A}^\infty \frac{r}{A} \exp\po - \frac{Nr^2}{8\mathrm{Tr}(\C)} + 2cr\pf \dd r \\
&\leqslant  \frac{32c^3}A     \exp\po 2cN - \frac{NA^2}{8\mathrm{Tr}(\C)} + 2cA\pf \,.
\end{align*}
Taking $A\geqslant 4\sqrt{c\mathrm{Tr}(\C)}$, this is bounded uniformly in $N$, which concludes the proof.
\end{proof}

Next, we turn to the study of $\nu_N$ in~\eqref{eq:nuNfluctuation}. By the Laplace-Varadhan lemma, we expect   $\omega_N$ to converge as $N\rightarrow \infty$ to $\omega$ given in~\eqref{eq:defomega}. Actually we have the following quantitative result (corresponding to \cite[Proposition 4.7]{Dagallier}).

\begin{lemma}\label{lem:omegaNtoomega}
Under Assumption~\ref{assum:PL->unifLSI}, for all $N\geqslant 1$,
\begin{equation}
\label{eq:omegaNtoomega}
 \| \hat \omega(\theta) - \hat \omega_N(\theta)\|_\infty \leqslant \frac1N \left\|\frac{\delta^2 \mathcal E_c}{\delta m^2}\right\|_\infty\,.
\end{equation}
\re{In particular, the probability measure $\tilde \nu_N$ with density proportional to $\exp(-N\hat \omega(\theta) )$ with respect to $\gamma_{\C}$  is well defined.}
\end{lemma}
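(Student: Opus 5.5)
We need to compare $\hat\omega_N(\theta) = -\frac1N\ln Z_N(\theta)$ with $\hat\omega(\theta)=\inf_\rho \mathcal F_\theta(\rho)$. Here
\[Z_N(\theta)=\int_{E^N}\exp\Big(-\sum_i [V(x_i)-\theta\cdot\rp(x_i)] - N\mathcal E_c(\pi_\bx)\Big)\dd\bx.\]
Note that $\mathcal F_\theta(\rho)=\rho(V)-\theta\cdot\rho(\rp)+\mathcal E_c(\rho)+\mathcal H(\rho)$, since the quadratic term $\frac12|\rho(\varphi)|^2$ cancels against the $-\frac12|\rho(\varphi)|^2$ in \eqref{eq:energy-quadra}. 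So $\mathcal F_\theta$ is the free energy of the flat-convex energy $\mathcal E_\theta(\rho)=\rho(V-\theta\cdot\rp)+\mathcal E_c(\rho)$. The task therefore reduces to the standard comparison between the $N$-particle free energy and the mean-field free energy for flat-convex energies with bounded second flat derivative. I will prove the two inequalities separately, using the Gibbs variational principle
\[-\tfrac1N\ln Z_N(\theta)=\inf_{\rho^N}\tfrac1N\Big(\int N\mathcal E_\theta(\pi_\bx)\rho^N(\dd\bx)+\mathcal H(\rho^N)\Big).\]

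\emph{Upper bound $\hat\omega_N\le\hat\omega+\frac1N\|\delta^2\mathcal E_c/\delta m^2\|_\infty$.} Test the variational principle with $\rho^{\otimes N}$, where $\rho$ is arbitrary with finite free energy (eventually near-optimal or $\bar\mu_\theta$). The entropy gives $N\mathcal H(\rho)$, and the linear part gives $N\rho(V-\theta\cdot\rp)$ exactly. It remains to bound $\mathbb E_{\rho^{\otimes N}}\mathcal E_c(\pi_\bX)-\mathcal E_c(\rho)$. Expand $\mathcal E_c$ to second order along $t\mapsto\rho+t(\pi_\bx-\rho)$:
\[\mathcal E_c(\pi_\bx)=\mathcal E_c(\rho)+\int\tfrac{\delta\mathcal E_c}{\delta m}(\rho,\cdot)\,\dd(\pi_\bx-\rho)+\int_0^1(1-t)\iint\tfrac{\delta^2\mathcal E_c}{\delta m^2}(\rho_t,y,z)(\pi_\bx-\rho)^{\otimes2}(\dd y,\dd z)\,\dd t.\]
The first-order term has zero mean. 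For the second-order term, the interpolating measure $\rho_t$ depends on $\bx$, so independence cannot be used directly. My plan is to bound the double integral crudely, using only the diagonal contribution $\frac1{N^2}\sum_i\delta_{x_i}\otimes\delta_{x_i}$, and to handle the off-diagonal terms by a leave-one-out argument. An alternative, which is the route I would actually try first, is flat convexity: convexity gives
\[\mathcal E_c(\pi_\bx)\le \mathcal E_c(\rho)+\text{linear}+\ldots\]
only in the wrong direction, so instead I would use the cleaner identity obtained by replacing $\delta_{x_i}$ by independent copies. This yields the standard estimate $|\mathbb E\,\mathcal E_c(\pi_\bX)-\mathcal E_c(\rho)|\le\frac1N\|\delta^2\mathcal E_c/\delta m^2\|_\infty$, in the spirit of \cite[Proposition 4.7]{Dagallier}. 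Taking the infimum over $\rho$ gives the bound.

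\emph{Lower bound $\hat\omega_N\ge\hat\omega$.} Here flat convexity is what makes a clean inequality possible. For any $\rho^N$ with marginals $\rho^{N,i}$, let $\bar\rho=\frac1N\sum_i\rho^{N,i}$. Subadditivity of entropy gives $\mathcal H(\rho^N)\ge\sum_i\mathcal H(\rho^{N,i})\ge N\mathcal H(\bar\rho)$, the second step by convexity of $\mathcal H$. Jensen's inequality for the flat-convex $\mathcal E_c$ gives $\mathbb E_{\rho^N}\mathcal E_c(\pi_\bX)\ge\mathcal E_c(\mathbb E\pi_\bX)=\mathcal E_c(\bar\rho)$. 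This step requires Jensen in infinite dimension, which holds thanks to the continuity and boundedness of the flat derivatives: use the tangent inequality $\mathcal E_c(\mu)\ge\mathcal E_c(\bar\rho)+\int\frac{\delta\mathcal E_c}{\delta m}(\bar\rho,\cdot)\dd(\mu-\bar\rho)$ and take expectations. The linear term equals $N\bar\rho(V-\theta\cdot\rp)$. Hence the functional is at least $\mathcal F_\theta(\bar\rho)\ge\hat\omega(\theta)$. The main obstacle is the second-order error control in the upper bound; the lower bound is exact.

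\emph{Well-posedness of $\tilde\nu_N$.} From the uniform bound, $e^{-N\hat\omega}\le e^{\|\cdot\|_\infty}e^{-N\hat\omega_N}=e^{\|\cdot\|_\infty}Z_N$. By \eqref{loc:sdfgdfgqmp}, $\mathbb E_{\gamma_\C}Z_N=\int e^{-U_N}<\infty$, so $e^{-N\hat\omega}\in L^1(\gamma_\C)$. Positivity of this integral is clear, since $\hat\omega$ is finite.
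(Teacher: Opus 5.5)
Your lower bound $\hat\omega_N\geqslant\hat\omega$ is correct, and it takes a different route from the paper. The paper uses the self-consistency equation~\eqref{eq:selfconstitenttheta} to rewrite $Z_N(\theta)$ against $\bar\mu_\theta^{\otimes N}$, giving $\hat\omega-\hat\omega_N=\frac1N\ln\mathbb E\,e^{-NB(\bX)}$ with $B\geqslant0$ the Bregman remainder of $\mathcal E_c$ at $\bar\mu_\theta$. You instead combine the Gibbs variational principle, subadditivity and convexity of the entropy, and the tangent inequality at $\bar\rho$; this never uses $\bar\mu_\theta$. Your reduction of the upper bound (testing with $\rho^{\otimes N}$) is the paper's Jensen step. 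The gap is the estimate $\mathbb E_{\rho^{\otimes N}}[\mathcal E_c(\pi_{\bX})]-\mathcal E_c(\rho)\leqslant \frac CN$ itself, which you call ``standard'' but do not prove. You rightly note that the interpolation point $\rho+t(\pi_{\bx}-\rho)$ depends on $\bx$. However, the leave-one-out treatment of the off-diagonal terms you sketch needs $\mu\mapsto\frac{\delta^2\mathcal E_c}{\delta m^2}(\mu,\cdot,\cdot)$ to be Lipschitz in the measure argument (a bounded third flat derivative), and Assumption~\ref{assum:PL->unifLSI} does not give this. With only the sup-norm bound, the off-diagonal part is $O(1)$ in total rather than $O(1/N)$, because $\pi_{\bx}-\rho$ is not small in total variation. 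The paper's own line evaluates $\mathbb E\int g_s\,\dd(\pi_{\bx}-\bar\mu_\theta)^{\otimes 2}$ as if $g_s$ did not depend on $\bx$, which is exact only for quadratic $\mathcal E_c$. So the obstruction you flag is genuine.

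The missing idea is to add the particles one at a time, so that each expansion is taken at a base point independent of the particle being added. With $\bX\sim\rho^{\otimes N}$, set $\mu_k=\frac1N\sum_{i\leqslant k}\delta_{X_i}+(1-\frac kN)\rho$. Then $\mu_0=\rho$, $\mu_N=\pi_{\bX}$, and $\mu_{k-1}+\frac sN(\delta_{X_k}-\rho)$ is a probability measure for $s\in[0,1]$. Expanding,
\begin{multline*}
\mathcal E_c(\mu_k)-\mathcal E_c(\mu_{k-1})=\frac1N\int_E\frac{\delta\mathcal E_c}{\delta m}(\mu_{k-1},\cdot)\,\dd(\delta_{X_k}-\rho)\\
+\frac1{N^2}\int_0^1(1-s)\int_{E^2}\frac{\delta^2\mathcal E_c}{\delta m^2}\Big(\mu_{k-1}+\frac sN(\delta_{X_k}-\rho),y,z\Big)(\delta_{X_k}-\rho)^{\otimes2}(\dd y,\dd z)\,\dd s\,.
\end{multline*}
Conditionally on $X_1,\dots,X_{k-1}$, the first term has zero mean, because $X_k\sim\rho$ is independent of $\mu_{k-1}$. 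The second term is at most $\frac2{N^2}\|\frac{\delta^2\mathcal E_c}{\delta m^2}\|_\infty$ in absolute value, since $(\delta_x-\rho)^{\otimes2}$ has total variation at most $4$. Summing over $k$ gives $|\mathbb E\,\mathcal E_c(\pi_{\bX})-\mathcal E_c(\rho)|\leqslant\frac2N\|\frac{\delta^2\mathcal E_c}{\delta m^2}\|_\infty$, uniformly in $\rho$, using only the assumed bound. This completes your upper bound, with constant $2$ instead of $1$, which does not matter for Lemma~\ref{lem:LISnuN}. Your well-posedness paragraph is then fine, since it uses only this direction.
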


\begin{proof}
Recall the self-consistency equation~\eqref{eq:selfconstitenttheta} solved by $\bar\mu_\theta$. 
 Write $Z_\theta$ the normalising constant. Hence, using the representation~\eqref{eq:omegaegalite} of $\hat \omega$,
 \begin{align*}
\hat \omega(\theta) &= \int \bar\mu_\theta\ln\bar\mu_\theta + \mathcal E_0(\bar\mu_\theta) - \theta\cdot \bar\mu_\theta(\rp)     \\
&=    - \ln Z_\theta - \bar\mu_\theta \po  \frac{\delta\mathcal E_0}{\delta m}(\bar \mu_\theta)\pf + \mathcal E_0(\bar\mu_\theta) \,.
 \end{align*}
 Hence
 \begin{align*}
 \hat \omega(\theta) -\hat  \omega_N(\theta) &= \frac1N \ln Z_N(\theta)  - \ln Z_\theta - \bar\mu_\theta \po  \frac{\delta\mathcal E_0}{\delta m}(\bar \mu_\theta)\pf + \mathcal E_0(\bar\mu_\theta) \\
 &= \frac1N \ln \po \frac{Z_N(\theta) \exp\po -N  \bar\mu_\theta \po  \frac{\delta\mathcal E_0}{\delta m}(\bar \mu_\theta)\pf + N \mathcal E_0(\bar\mu_\theta)\pf  }{Z_\theta^N} \pf \\
 &= \frac1N \ln \int_{E^{N}} \exp\po - N \co \mathcal E_0(\pi_{\bx}) +   (\bar\mu_\theta- \pi_{\bx}) \po  \frac{\delta\mathcal E_0}{\delta m}(\bar \mu_\theta)\pf - \mathcal E_0(\bar\mu_\theta) \cf \pf \bar\mu_\theta^{\otimes N}(\dd \bx)\,.
 \end{align*}
 The linear part of $\mathcal E_0$ cancels out in this expression, so we can replace $\mathcal E_0$ by $\mathcal E_c$. Since $\mathcal E_c$ is convex, the term in brackets is non-negative, hence $ \hat \omega(\theta) -\hat  \omega_N(\theta) \leqslant 0$.  

 Moreover, by Jensen,
  \begin{align*}
 \hat \omega(\theta) - \hat  \omega_N(\theta) &\geqslant   - \int_{E^{N}}  \co \mathcal E_c(\pi_{\bx}) +   (\bar\mu_\theta- \pi_{\bx}) \po  \frac{\delta\mathcal E_c}{\delta m}(\bar \mu_\theta)\pf - \mathcal E_c(\bar\mu_\theta) \cf \bar\mu_\theta^{\otimes N}(\dd \bx) \\
 &= - \frac12 \int_{E^{N}}  \int_0^t \int_{E^2} g_s \dd \po \pi_{\bx} - \bar\mu_{\theta}\pf^{\otimes 2} \dd s \bar\mu_\theta^{\otimes N}(\dd \bx)
 \end{align*}
 with
 \[g_s(x,y) = \frac{\delta^2 \mathcal E_c}{\delta m^2}\po s\pi_{\bx} +(1-s) \bar\mu_{\theta},x,y\pf \,.\]
 This gives~\eqref{eq:omegaNtoomega} since
 \[ \int_{E^{N}}   \int_{E^{2}} g_s \dd \po \pi_{\bx} - \bar\mu_{\theta}\pf^{\otimes 2}  \bar\mu_\theta^{\otimes N}(\dd \bx) = \frac1N \int_{E} g_s(x,x)\bar\mu_\theta(\dd x) - \frac1N \int_{E^2} g_s \bar\mu_\theta^{\otimes 2}\,.\]
\re{
The conclusion for $\tilde \nu_N$ follows from~\eqref{loc:sdfgdfgqmp}, since
\[\mathbb E_{\gamma_{\C}} \po e^{-N \po \omega(\Theta) - \frac12\|\Theta\|_{\C}^2 \pf }\pf \leqslant e^{\left\|\frac{\delta^2 \mathcal E_c}{\delta m^2}\right\|_\infty} \mathbb E_{\gamma_{\C}} \po e^{-N \po \omega_N(\Theta) - \frac12\|\Theta\|_{\C}^2 \pf }\pf = e^{\left\|\frac{\delta^2 \mathcal E_c}{\delta m^2}\right\|_\infty} \int_{E^N} e^{-U_N} < \infty\,.   \] 
}
\end{proof}

Via the Holley-Stroock perturbation criterion for LSI, a direct consequence of Lemma~\ref{lem:omegaNtoomega} is the following:

\begin{lemma}\label{lem:LISnuN}
Under Assumption~\ref{assum:PL->unifLSI},  for any $N\geqslant 1$, if the normalised measure $\tilde \nu_N$ satisfies a LSI~\eqref{eq:ballisticLSI}   then the fluctuation measure  $\nu_N$ in~\eqref{eq:nuNfluctuation} satisfies the same LSI with constant $e^{-a} \tilde \lambda_N$ with $a=\|\frac{\delta^2 \mathcal E_c}{\delta m^2}\|_\infty$.
\end{lemma}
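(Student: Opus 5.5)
The plan is to compare the two measures through their densities with respect to the common reference measure $\gamma_{\C}$ and to apply the Holley--Stroock perturbation principle. By~\eqref{eq:nuNfluctuation} and~\eqref{eq:nuNfluctuation1},
\[
\nu_N(\dd\theta)\propto Z_N(\theta)\,\gamma_{\C}(\dd\theta)=e^{-N\hat\omega_N(\theta)}\gamma_{\C}(\dd\theta),
\]
while by~\eqref{eq:deftildenuN}, $\tilde\nu_N(\dd\theta)\propto e^{-N\hat\omega(\theta)}\gamma_{\C}(\dd\theta)$. Hence
\[
\frac{\dd\nu_N}{\dd\tilde\nu_N}(\theta)\propto e^{-N(\hat\omega_N(\theta)-\hat\omega(\theta))}=:e^{-h(\theta)} .
\]
Lemma~\ref{lem:omegaNtoomega} gives $\|h\|_\infty\leqslant a$. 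More precisely, its proof shows the one-sided bound $0\leqslant h\leqslant a$, since $\hat\omega-\hat\omega_N\leqslant 0$ and $\hat\omega-\hat\omega_N\geqslant -a/N$. So the oscillation of $h$ is at most $a$, not $2a$. Both measures are well defined by Lemma~\ref{lem:omegaNtoomega} and~\eqref{loc:sdfgdfgqmp}.

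Next I would recall the Holley--Stroock argument in the form adapted to the $\C$-weighted Fisher information. It rests on the variational formula
\[
\mathcal H(\rho|\mu)=\mu(g\ln g)-\mu(g)\ln\mu(g)=\inf_{t>0}\mu\bigl(g\ln(g/t)-g+t\bigr),\qquad g=\rho/\mu,
\]
where the integrand is nonnegative. The key point is that the Fisher-information side involves only the gradient of $\sqrt g$: if $g=\dd\rho/\dd\nu_N$ and $\tilde g=\dd\rho/\dd\tilde\nu_N=g\,\dd\nu_N/\dd\tilde\nu_N$, we cannot simply swap $g$ and $\tilde g$. Instead I would use the functional form of the LSI for $\tilde\nu_N$ and apply it to $f^2$ with $f=\sqrt{g}$ as a test function. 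The functional form reads
\[
\mathrm{Ent}_{\tilde\nu_N}(f^2)\leqslant \frac{2}{\tilde\lambda_N}\int|\C\na f|^2\dd\tilde\nu_N ,
\]
which is equivalent to~\eqref{eq:ballisticLSI} by taking $\rho=f^2\tilde\nu_N$. Holley--Stroock then gives:

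\begin{itemize}
\item $\mathrm{Ent}_{\nu_N}(f^2)\leqslant e^{\sup(-h)+c}\,\mathrm{Ent}_{\tilde\nu_N}(f^2)$, obtained by comparing the nonnegative integrands for the same $t$, where $c$ is the log-ratio of the normalising constants;
\item $\int|\C\na f|^2\dd\tilde\nu_N\leqslant e^{\sup h - c}\int|\C\na f|^2\dd\nu_N$.
\end{itemize}

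The product of the two factors is $e^{\mathrm{osc}(h)}\leqslant e^{a}$. This yields the LSI for $\nu_N$ with constant $e^{-a}\tilde\lambda_N$, and translating back to the $\mathcal H$/$\mathcal I_{\C}$ formulation gives the claim.

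The only delicate points are technical. One is to make sure the weighted Fisher information $\mathcal I_{\C}$ transfers exactly as the unweighted one does; it does, since the argument never touches the gradient structure. The other is that when $\dim\mathbb H=\infty$ the densities are relative to $\gamma_{\C}$ rather than Lebesgue measure. Neither affects the argument, since Holley--Stroock only uses bounded density ratios. So the main "obstacle" is just checking that the oscillation bound is $a$ and not $2a$, which follows from the sign information in the proof of Lemma~\ref{lem:omegaNtoomega}.
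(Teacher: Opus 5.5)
Your proposal is correct and follows the paper's route: the paper simply invokes the Holley--Stroock perturbation criterion together with Lemma~\ref{lem:omegaNtoomega}, exactly as you do. Your remark that the convexity (sign) argument in the proof of that lemma gives $\hat\omega\leqslant\hat\omega_N$ matters. With it, $h=N(\hat\omega_N-\hat\omega)\in[0,a]$ has oscillation at most $a$, which is what yields the stated factor $e^{-a}$; the sup-norm bound alone would only give $e^{-2a}$, and the paper leaves this point implicit.
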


\begin{proof}[Proof of Theorem~\ref{thm:PL->unifLSI}]
From Lemmas~\ref{lem:gammaX} and \ref{lem:LISnuN}, we simply have to follow the proof of \cite[Proposition 4.6 and Theorem 4.2]{Dagallier}, that we recall here for completeness. Take $\nu' \in\mathcal P_2(E^{N})$ with relative density $f=\dd \nu'/\dd \mu_\infty^N$ and write $F(\theta) = \int_{E^N} f(\bx) \mu_{\theta}^N(\bx)\dd \bx$. Based on the representation~\eqref{decomposition}, $\nu_M'(\dd \theta):=F(\theta)\nu_N(\dd \theta)$ is the marginal density (in $\theta$) of $ \nu_m'(\dd \bx,\dd \theta):=f(\bx)\mu_{\theta}^N(\bx)\dd \bx\nu_N(\dd \theta)$, and $f(\bx)/F(\theta)$ is the conditional density (in $\bx$ given $\theta$) of $\nu_m'$ with respect to $\mu_\theta^N$.  The classical macro-micro decomposition of the entropy gives
 \begin{align}
 \mathcal H\po \nu|\mu_\infty^N \pf &= \int_{\mathbb H} \int_{E^N} f \ln f \dd \mu_\theta^N \nu_N(\dd \theta )\nonumber\\
 &= \int_{\mathbb H} F \ln F \dd \nu_N + \int_{\mathbb H} \co \int_{E^N} \frac{f}{F(\theta)}\ln \frac{f}{F(\theta)}  \mu_\theta^N\cf F(\theta) \nu_N( \dd \theta) \,. \label{eq:micromacroentropie}
 \end{align}
 The microscopic LSI from Lemma~\ref{lem:microscopicLSI} gives, for all $\theta\in \mathbb H$,
\[
 \int_{E^N} \frac{f}{F(\theta)}\ln \frac{f}{F(\theta)}  \mu_\theta^N   \leqslant \frac1{2\lambda_{N,\theta}} \int_{E^N}\left|\na_{\bx} \ln \frac{f}{F(\theta)}\right|^2   \frac{f}{F(\theta)}  \mu_\theta^N  = \frac1{2\lambda_{N,\theta}} \int_{E^N}\left|\na_{\bx} \ln f \right|^2   \frac{f}{F(\theta)}  \mu_\theta^N  \,.
\] 
Integrating with respect to $\theta$ gives
\begin{align}
\int_{\mathbb H} \co \int_{E^N} \frac{f}{F(\theta)}\ln \frac{f}{F(\theta)}  \mu_\theta^N\cf F(\theta) \nu_N(\dd \theta) 
& \leqslant  \int_{\mathbb H} \frac1{2\lambda_{N,\theta}} \int_{E^N}\left|\na_{\bx} \ln f\right|^2  f   \mu_\theta^N \nu_N(  \dd \theta) \nonumber\\
&=  \int_{E^N}\left|\na_{\bx} \ln f\right|^2  f \co \int_{\mathbb H} \frac1{2\lambda_{N,\theta}} \frac{\mu_\theta^N }{\mu_\infty^N}\nu_N(\dd \theta) \cf \mu_\infty^N  \,.\label{loc:azazerzetlm}
\end{align}
From~\eqref{decomposition}, for all $\bx\in E^N$, $\mu_\theta^N(\bx) \nu_N(\dd\theta) / \mu_\infty^N(\bx)$ is a conditional probability, equal to  
\begin{align*}
\frac{\mu_\theta^N(\bx) }{\mu_\infty^N(\bx)} \nu_N(\dd \theta) &  \propto  \exp\po - V_N(\bx) + \theta \cdot \sum_{i=1}^N \rp(x_i)   + U_N(\bx) \pf \gamma_{\C}(\dd \theta)
 \propto \gamma_{\pi_{\bx}}(\dd \theta)
\end{align*}
introduced in Lemma~\ref{lem:gammaX}. Using this lemma in~\eqref{loc:azazerzetlm} (with $2 \lambda_{N,\theta}^{-1} \leqslant \lambda_{N,\theta}^{-2}+1$) gives
\begin{equation}
\label{loc:qfsffa}
 \int_{\mathbb H} \co \int_{E^N} \frac{f}{F(\theta)}\ln \frac{f}{F(\theta)}  \mu_\theta^N\cf F(\theta) \nu_N(\dd \theta)  \leqslant \frac{M+1}{4} \mathcal I(\nu|\mu_\infty^N)\,.
\end{equation}

We now turn to the study of the macroscopic term in~\eqref{eq:micromacroentropie}. The LSI  from Lemma~\ref{lem:LISnuN} gives  
\begin{equation}
\label{loc:vdfgemfa}
\int_{\mathbb H} F \ln F \dd \nu_N  \leqslant \frac{4}{\kappa N} \int_{\mathbb H} |\C\na_\theta \sqrt{ F}|^2  \dd \nu_N \,.
\end{equation}
Recalling the expression~\eqref{eq:def-mutheta-N} of $\mu_\theta^N$,
\begin{align*}
2 \C \na_\theta \sqrt{ F}(\theta) &= \frac{1}{\sqrt{F(\theta)}} \C \na_\theta F(\theta) \\
& = \frac{1}{\sqrt{F(\theta)}} \int_{E^N} f(\bx) N \po \pi_{\bx}( \varphi) - \int_{E^N} \pi_{\mathbf{y}}(\varphi)\mu_\theta^N(\mathbf{y})\dd \mathbf{y}\pf \mu_\theta^N (\bx)\dd \bx\\
&= \frac{N}{\sqrt{F(\theta)}} \mathrm{Cov}_{\mu_\theta^N}\po f(\bX),\pi_{\bX}(\varphi)\pf \,.
\end{align*}
For any $\zeta\in \mathbb H$ with $|\zeta|=1$, $\bx \mapsto  \zeta \cdot \pi_{\bx} (\varphi) $ is $N^{-1/2}\|\na \varphi\|_\infty$-Lipschitz, and thus \cite[Lemma A.1]{Dagallier} gives
\begin{equation}
\label{loc:dmlpaldfdg}
2 \zeta\cdot  \C \na_\theta \sqrt{ F}(\theta) = \frac{N}{\sqrt{F(\theta)}} \mathrm{Cov}_{\mu_\theta^N}\po f(\bX), \zeta\cdot \pi_{\bX}(\varphi)\pf \leqslant \frac{2\sqrt{N}\|\na \varphi\|_\infty}{ \lambda_{N,\theta}} \sqrt{ \mu_{\theta}^N(|\na \sqrt{f}|^2)} 
\end{equation}
(we used that $\mu_\theta^N(f) = F(\theta)$). Since $\zeta$ is arbitrary, this gives a bound on $|\C \na_\theta \sqrt{F}|$, which we plug in~\eqref{loc:vdfgemfa} to get that
\begin{align*}
\int_{\mathbb H} F \ln F \dd \nu_N  & \leqslant \frac{8\|\na \varphi\|_\infty^2}{\kappa } \int_{\mathbb H}\int_{E^N} \frac{1}{\lambda_{N,\theta}^2} |\na_{\bx} \sqrt{ f}|^2  \dd \mu_{\theta}^N\dd \nu_N \,.
\end{align*} 
Reasoning as we did from~\eqref{loc:azazerzetlm} gives
\[\int_{\mathbb H} F \ln F \dd \nu_N  \leqslant \frac{2\|\na \varphi\|_\infty^2M}{\kappa }\mathcal I(\nu|\mu_\infty^N)\,.\]
Combining this and~\eqref{loc:qfsffa} in~\eqref{eq:micromacroentropie} concludes the proof.
\end{proof}

\section{Other proofs}\label{sec:otherproofs}

\subsection{Coarse-graining}

Before establishing the various results stated in Section~\ref{sec:results}, we start with a useful lemma which clarifies the relations between $\mathcal F$, $\hF$ and $\omega$. Recall the definition~\eqref{eq:defFtheta} of $\mathcal F_\theta$.

\begin{lemma}\label{lem:omegaFhat}
Assume that for all $\theta \in \mathbb H$, $\mathcal F_\theta$
admits over $\mathcal P_2(E)$ a unique global minimiser $\bar\mu_\theta \in \mathcal P_2(E)$. Then:
\begin{enumerate}
\item For all $\theta\in \mathbb H$,  $\bar\mu_\theta  $ solves the so-called self-consistency equation
\begin{equation}
\label{eq:selfconstitenttheta}
 \bar\mu_\theta \propto \exp\po - \frac{\delta \mathcal E_0}{\delta m}(\bar \mu_\theta,\cdot  ) + \theta \cdot  \rp \pf \,,
\end{equation} 
and it is characterised by $m_\theta := \bar\mu_\theta(\rp)$ in the sense that $\bar\mu_\theta$ is the unique minimiser of $\mathcal F$ among $\{\rho\in\mathcal P_2(E),\ \rho(\rp)=m_\theta\}$. 
 Moreover, $\bar \mu_\theta(\varphi)$ is the unique minimiser on $\mathbb H$ of $m \mapsto \hF(m) + \frac12|\C m|^2 - 2 m\cdot \theta$, and in particular $\hF(m_\theta)<\infty$ for all $\theta\in\mathbb H$.  For all $\theta\in\mathbb H$,
\begin{equation}\label{eq:omegaegalite}
\hat \omega(\theta) = \mathcal F(\bar\mu_\theta) + \frac12|\C m_\theta|^2 - m_\theta\cdot \theta  = \hF(m_\theta)+ \frac12|\C m_\theta|^2 - m_\theta\cdot \theta\,.
\end{equation}
\item Enforcing furthermore Assumption~\ref{assum:finitedim} and  that the regularity condition~\eqref{eq:regularitymutheta} holds, 
 then $\omega$ is $\mathcal C^1$ over $\mathbb H$ and $\hF$ is $\mathcal C^1$ over $\mathcal M=\{m_\theta,\ \theta\in \mathbb H\}$, and for all $\theta \in \mathbb H$,
\begin{equation}
\label{eq:naomega}
 \C^2 \na \omega(\theta) = \theta - \C^2 m_\theta =   \na \hF(m_\theta) \,.
\end{equation}
Moreover, the set $\mathcal K= \{\theta\in\mathbb H,\ \theta= \C^2 m_\theta\}$ is exactly the set of critical points of $\omega$ and $\hF$, and $\theta \mapsto \bar\mu_\theta$ is a bijection from $\mathcal K$ to the set of critical points of $\mathcal F$ (with inverse map $\mu \mapsto  \mu(\varphi)$). It is also a bijection between the global minimisers of $ \omega$ (which are exactly the global minimisers of $\hF$) and those of $\mathcal F$. Besides, $\inf \mathcal F= \inf \omega = \inf \hF$.

\end{enumerate}
\end{lemma}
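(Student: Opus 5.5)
Part 1 comes first, by first-order optimality for the strictly defined problem $\min \mathcal F_\theta$. Write $\mathcal F_\theta(\rho)=\mathcal E_0(\rho)+\mathcal H(\rho)-\theta\cdot\rho(\rp)$, where $\mathcal E_0(\rho)=\rho(V)+\mathcal E_c(\rho)$. Note that $\mathcal E_0$ is exactly $\mathcal E+\frac12|\rho(\varphi)|^2$, so the quadratic term cancels. Perturbing $\bar\mu_\theta$ in flat directions $(1-t)\bar\mu_\theta+t\nu$ and using the standard entropy argument, we get that $\frac{\delta\mathcal E_0}{\delta m}(\bar\mu_\theta,\cdot)+\ln\bar\mu_\theta-\theta\cdot\rp$ is constant on the support. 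Positivity of the density is forced by the entropy, so this gives~\eqref{eq:selfconstitenttheta}.

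For the characterisation, take $\rho$ with $\rho(\rp)=m_\theta$. Then $\rho(\varphi)=\C m_\theta=\bar\mu_\theta(\varphi)$, hence $\mathcal F_\theta(\rho)-\mathcal F_\theta(\bar\mu_\theta)=\mathcal F(\rho)-\mathcal F(\bar\mu_\theta)$. Uniqueness of the minimiser of $\mathcal F_\theta$ therefore gives uniqueness of the minimiser of $\mathcal F$ on the fibre, and $\hF(m_\theta)=\mathcal F(\bar\mu_\theta)<\infty$. Splitting $\inf_\rho$ in~\eqref{eq:defomega1} as $\inf_m\inf_{\rho(\rp)=m}$ yields~\eqref{eq:omegaegalite}. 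The same splitting shows that $m_\theta$ is the unique minimiser of $m\mapsto \hF(m)+\frac12|\C m|^2-m\cdot\theta$: if $m'$ were another, a near-minimiser $\rho$ in its fibre would be a second minimiser of $\mathcal F_\theta$, or the infimum would not be attained. Making this rigorous needs attainment of the fibre infimum, which holds by lower semicontinuity and coercivity from Assumption~\ref{ass:basic}. (I read the factor $2$ and the ``$\bar\mu_\theta(\varphi)$'' in the statement as typos for $m\cdot\theta$ and $m_\theta$.)

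For part 2, the plan is an envelope argument. Since $\hat\omega$ is an infimum of functions affine in $\theta$, it is concave, and $-m_\theta$ is a supergradient: $\hat\omega(\zeta)\le \hF(m_\theta)+\frac12|\C m_\theta|^2-m_\theta\cdot\zeta=\hat\omega(\theta)-m_\theta\cdot(\zeta-\theta)$. Continuity of $\theta\mapsto m_\theta$ from~\eqref{eq:regularitymutheta} then upgrades this to differentiability, with $\na\hat\omega(\theta)=-m_\theta$. Adding $\frac12|\C^{-1}\theta|^2$ gives $\C^2\na\omega=\theta-\C^2m_\theta$.

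For $\hF$ on $\mathcal M$, I would use~\eqref{eq:omegaegalite}, i.e.\ $\hF(m_\theta)=\hat\omega(\theta)-\frac12|\C m_\theta|^2+m_\theta\cdot\theta$. Differentiating in $\theta$ with the chain rule, and using $\na\hat\omega=-m_\theta$ and $\theta\mapsto m_\theta\in\mathcal C^1$, gives $D_\theta[\hF(m_\theta)]=(\theta-\C^2m_\theta)\cdot Dm_\theta$. This identifies $\na\hF(m_\theta)=\theta-\C^2m_\theta$ as the gradient along $\mathcal M$.

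I expect this step to be the main obstacle. One has to make sense of $\hF$ being $\mathcal C^1$ on $\mathcal M$ when $\mathcal M$ might not be open, or $Dm_\theta$ might be singular. I would handle it via the minimisation characterisation of $m_\theta$ from part 1. Since $m_\theta$ minimises $\hF+\frac12|\C\cdot|^2-\theta\cdot$, the vector $\theta-\C^2m_\theta$ is a subgradient of $\hF$ at $m_\theta$ in the sense $\hF(m)\ge \hF(m_\theta)+(\theta-\C^2m_\theta)\cdot(m-m_\theta)-\frac12|\C(m-m_\theta)|^2$. Combined with the upper bound obtained by evaluating at $m_\zeta$, this gives a two-sided estimate, and hence the gradient along $\mathcal M$.

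The remaining claims then follow quickly. Critical points of $\omega$ are the $\theta$ with $\theta=\C^2m_\theta$, and by~\eqref{eq:naomega} these are also the critical points of $\hF$. If $\theta\in\mathcal K$, then~\eqref{eq:selfconstitenttheta} with $\theta\cdot\rp=\C m_\theta\cdot\varphi=\bar\mu_\theta(\varphi)\cdot\varphi$ is precisely the critical-point equation of $\mathcal F$, $\ln\mu+\frac{\delta\mathcal E}{\delta m}(\mu)=$ const. Conversely, a critical point $\mu$ of $\mathcal F$ satisfies~\eqref{eq:selfconstitenttheta} with $\theta=\C\mu(\varphi)$. It is then a critical point of the $\mathcal F_\theta$ obtained this way, and since $\mathcal F_\theta$ is flat-convex plus entropy, hence strictly convex along flat lines, $\mu=\bar\mu_\theta$. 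This gives the bijection with inverse $\mu\mapsto\mu(\varphi)$.

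Finally, $\inf\mathcal F=\inf\hF$ holds by definition. For $\inf\omega$, the definition~\eqref{eq:defomega} gives $\omega\ge\inf\mathcal F$, with equality at $\theta=\C\rho_*(\varphi)$ for a minimiser $\rho_*$. The minimiser bijection follows from these equalities together with the bijection between critical points.
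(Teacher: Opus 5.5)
Your proof is correct, and your reading of the two typos ($m\cdot\theta$ for $2m\cdot\theta$, $m_\theta$ for $\bar\mu_\theta(\varphi)$) is right. Part~1 and the critical-point/minimiser statements follow the paper's proof:
\begin{itemize}
\item the first-order condition for $\mathcal F_\theta$;
\item the comparison of $\mathcal F_\theta$ and $\mathcal F$ on the fibre $\{\rho(\psi)=m_\theta\}$;
\item the identity $\theta\cdot\psi=\bar\mu_\theta(\varphi)\cdot\varphi$ on $\mathcal K$;
\item the bound $\omega\geqslant\inf\mathcal F$ from \eqref{eq:defomega}.
\end{itemize}
Your explicit appeal to flat convexity of $\mathcal F_\theta$, to identify a critical point $\mu$ of $\mathcal F$ with $\bar\mu_{\C\mu(\varphi)}$, is the ingredient the paper uses implicitly when it invokes ``uniqueness of the critical point of $\mathcal F_\theta$''.

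Where you genuinely differ is the derivation of \eqref{eq:naomega}.
\begin{itemize}
\item \emph{The paper goes from $\hF$ to $\omega$.} It reads $\na\hF(m_\theta)=\theta-\C^2m_\theta$ off as the first-order condition of $m\mapsto\hF(m)+\frac12|\C m|^2-m\cdot\theta$ at its minimiser $m_\theta$. It then differentiates \eqref{eq:omegaegalite} by the chain rule, and the $\na m_\theta$ terms cancel. This is short, but it tacitly requires $\hF$ to be differentiable at $m_\theta$ as a function on $\mathbb H$, which \eqref{eq:regularitymutheta} does not provide.
\item \emph{You go from $\omega$ to $\hF$.} The sandwich $-m_\zeta\cdot(\zeta-\theta)\leqslant\hat\omega(\zeta)-\hat\omega(\theta)\leqslant -m_\theta\cdot(\zeta-\theta)$ gives $\na\hat\omega=-m_\theta$ using only continuity of $\theta\mapsto m_\theta$. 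The assumed $\mathcal C^1$ regularity of $\theta\mapsto\mathcal F(\bar\mu_\theta)$ then becomes a consequence of that of $m_\theta$ rather than a hypothesis.
\item The chain rule in $\theta$, together with your proximal-subgradient inequality $\hF(m)\geqslant\hF(m_\theta)+(\theta-\C^2m_\theta)\cdot(m-m_\theta)-\frac12|\C(m-m_\theta)|^2$, then shows two things. First, $\theta-\C^2m_\theta$ is the derivative of $\hF$ along $\mathcal M$. Second, it coincides with $\na\hF(m_\theta)$ whenever the latter exists in the usual sense, which is all that is used later (cf.\ Remark~\ref{rem:regularityPDE}).
\end{itemize}
So your route is more robust than the paper's. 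One caveat: your two-sided estimate is uniform as $\zeta\to\theta$, not as $m_\zeta\to m_\theta$ inside $\mathcal M$. A genuine $\mathcal C^1$ statement in the variable $m$ therefore needs $\theta\mapsto m_\theta$ to be a local homeomorphism, e.g.\ $\na_\theta m_\theta$ invertible as in Lemma~\ref{lem:regularitycheck}. Otherwise $\theta-\C^2m_\theta$ need not even be a function of $m_\theta$. This ambiguity is already present in the statement itself.

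One small correction concerns uniqueness of $m_\theta$ as a minimiser, which the paper asserts but never proves. Your justification does not work as stated. Attainment of the fibre infimum does not follow from Assumption~\ref{ass:basic}, which contains no lower semicontinuity, and the constraint $\rho(\psi)=m'$ is not weakly closed when $\psi$ is unbounded. Under the hypotheses of part~2 you can bypass this with your own tool. Any other minimiser $m'$ satisfies $\hat\omega(\zeta)\leqslant\hat\omega(\theta)-m'\cdot(\zeta-\theta)$, so $-m'$ is a second supergradient of the concave function $\hat\omega$ at $\theta$. This contradicts differentiability of $\hat\omega$ unless $m'=m_\theta$.
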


\begin{remark}[concerning the assumptions in Lemma~\ref{lem:omegaFhat}]\label{rem:regularityPDE}
We provide in Lemma~\ref{lem:uniqueminimumtheta} below  a simple condition under which $\mathcal F_\theta$ admits a unique minimiser, and in Lemma~\ref{lem:regularitycheck} a condition  to check the regularity condition~\eqref{eq:regularitymutheta} when $\mathcal E_c$ is also a function of the parameter $\varphi$. In more general cases, the minimiser is characterised as the solution of the non-linear elliptic equation
\[\Delta \bar\mu_\theta + \na\cdot \po D\mathcal E_\theta(\bar\mu_\theta,\cdot) \bar\mu_\theta \pf = 0\,,\]
with the modulated energy
\begin{equation}
\label{eq:Etheta}
\mathcal E_\theta(\rho) = \mathcal E(\rho) + \frac12|\rho(\varphi)|^2 - \rho(\rp)\cdot \theta  \,. 
\end{equation}
The regularity of $\bar\mu_\theta$ (hence of $\mathcal F(\bar\mu_\theta)$ and $\bar\mu_\theta(\rp)$ as required in~\eqref{eq:regularitymutheta}) can thus be established under suitable conditions on $D\mathcal E_\theta$ following the stability theory of solutions of elliptic equations with respect to parameters. Moreover, it may be possible to remove the regularity conditions~\eqref{eq:regularitymutheta}  and to replace gradients in~\eqref{eq:naomega} by slopes or upper gradients (see \cite[Chapter 1]{ambrosio2005gradient}). Indeed, the only information we need in our analysis is  that $|\C\na \omega(\theta)| =| \C^{-1} \theta -\C m_\theta| = |\C^{-1}\na \hF(m_\theta)|$. Finally, when $\dim\mathbb H<\infty$ and $\C=I_{\mathbb H}$ for instance, let us notice that $\omega$ being a Moreau envelope of $\hF$, then it is automatically $\mathcal C^1$ and $\na \omega(\theta)=\theta-m_\theta$ if $\hF$ is $1$-semi-convex, see \cite[Proposition 5]{renaud2025moreau}.

  We will not discuss this further in order to focus on the rest of the analysis, keeping the regularity condition~\eqref{eq:regularitymutheta} as a general assumption. 
\end{remark}

\begin{remark}
When $\dim\mathbb H<\infty$ and $\C=I_{\mathbb H}$, since~\eqref{eq:defomega} shows that $\omega$ is a Moreau envelope of $\hF$, then local minimisers of $\omega$   correspond to local minisers of $\hF$, cf. \cite[Theorem 3.5]{khanh2025local}. With a linear change of variables, we deduce the same result when $\C\neq I_{\mathbb H}$.
\end{remark}

\begin{proof}[Proof of Lemma~\ref{lem:omegaFhat}]
As a global minimizer, $\bar\mu_\theta$ is a critical point of $\mathcal F_\theta$, namely $\frac{\delta \mathcal F_\theta}{\delta m}(\bar\mu_\theta,\cdot) = \frac{\delta \mathcal E_\theta}{\delta m}(\bar\mu_\theta,\cdot)  + \ln \bar\mu_\theta = constant$ (recall that $\frac{\delta \mathcal F_\theta}{\delta m} $ is defined up to an additive constant), which amounts to~\eqref{eq:selfconstitenttheta}.

Let $\rho \neq \bar\mu_\theta$ be such that $\rho(\varphi) = \C m_\theta$ (i.e. $\rho(\rp) = m_\theta = \bar\mu_\theta(\rp)$). Since
\[\mathcal F_\theta(\rho) > \mathcal F_\theta(\bar\mu_\theta)\,,\]
and in view of the definition~\eqref{eq:defFtheta} of $\mathcal F_\theta$ we equivalently get that
\[\mathcal F(\rho) > \mathcal F(\bar\mu_\theta)\,.\]
In other words, $\bar\mu_\theta$ is the unique minimiser of $\mathcal F$ among $\{\rho\in\mathcal P_2(E), \rho(\rp)=m_\theta\}$. As a consequence, $\bar\mu_\theta$ is characterised by $m_\theta$, and it is such that
\begin{equation}
\label{loc:sdfsdgsd}
\hF(m_\theta) = \mathcal F(\bar \mu_\theta) \,.
\end{equation}
Recalling the definition~\eqref{eq:defomega1} of $\hat \omega$ and that $\bar\mu_\theta$ minimises $\mathcal F_\theta$, this gives
\begin{equation}
\label{eqloc:omegatheta1}
\hat \omega (\theta) = \mathcal F(\bar\mu_\theta) + \frac12\re{|\C m_\theta|^2} - m_\theta\cdot \theta  =  \hF(m_\theta) + \frac12\re{|\C m_\theta|^2}  - m_\theta\cdot \theta \,.
\end{equation}
This concludes the proof of the first part of the lemma. From now on, Assumption~\ref{assum:finitedim} and~\eqref{eq:regularitymutheta} holds.

As a consequence of~\eqref{eqloc:omegatheta1}, thanks to~\eqref{eq:regularitymutheta}, $\omega$ (resp. $\hF$) is $\mathcal C^1$ over $\mathbb H$ (resp. $\{m_\theta : \theta\in\mathbb H\}$).  Moreover,~\eqref{loc:sdfsdgsd} means that the second infimum in~\eqref{eq:defomega1} is  attained at $m_\theta$, and the critical point equation gives
\[\na \hF(m_\theta) = \theta - \C^2 m_\theta\,. \]
Finally, taking the derivative in~\eqref{eqloc:omegatheta1}, 
\begin{equation*}
 \na \omega(\theta) = \C^{-2} \theta + \na \hat \omega (\theta) = \C^{-2} \theta + \na m_\theta  \cdot \po  \na \hF(m_\theta) +(\C^2 m_\theta- \theta) \pf  \re{ - m_\theta} = \re{\C^{-2} (\theta - \C^2 m_\theta)}\,.
\end{equation*}
These two last equalities show that the points in $\mathcal K=\{\theta\in\mathbb H,\ \theta=\C^2 m_\theta\}$ are exactly the critical points of $\omega$ and of $\hF$. Moreover, for $\theta\in \mathcal K$, the self-consistency equation~\eqref{eq:selfconstitenttheta} becomes
\begin{equation}
 \bar\mu_\theta \propto \exp\po - \frac{\delta \mathcal E_0}{\delta m}(\bar \mu_\theta ) + \C^2 \bar\mu_\theta(\rp) \cdot  \rp \pf = \exp\po - \frac{\delta \mathcal E_0}{\delta m}(\bar \mu_\theta ) + \bar\mu_\theta(\varphi) \cdot  \varphi \pf \,,
\end{equation}  
which is exactly the self-consistency equation which characterises the critical points of $\mathcal F$. Conversely, any  critical point $\mu_*$ of $\mathcal F$ satisfies
\[ \mu_* \propto \exp\po - \frac{\delta \mathcal E_0}{\delta m}(\mu_* ) + \mu_*(\varphi) \cdot  \varphi \pf \,.
\] 
Letting $\theta= \C^2 \mu_*(\rp)$, we see that $\mu_*$ satisfies~\eqref{eq:selfconstitenttheta}, and thus $\mu_* = \bar\mu_\theta$ by uniqueness of the critical point of $\mathcal F_\theta$.

Concerning global minimisers: by the definition~\eqref{eq:defomega}, $\omega(\theta) \geqslant \inf\mathcal F$ for all $\theta\in\mathbb H$. If $\mu_*$ is a global minimiser of $\mathcal F$ then it is a critical point and thus $\theta_*:=  \C \mu_*(\varphi)$ satisfies $\theta_* = \C^2 m_{\theta_*}$, so that $\omega(\theta_*) = \mathcal F(\mu_*) = \inf\omega$. With the same argument, from the second representation of $\omega$ in~\eqref{eq:defomega1}, we get that a  minimiser of $\hF$ is a minimiser of $\omega$. Similarly, from~\eqref{eq:defFhat}, $\hF(\theta) \geqslant \inf\mathcal F$ for all $\theta$, and the equality holds at $\theta_*= \C \mu_*(\varphi)$ with $\mu_*$ a minimiser of $\mathcal F$. Finally, let us show that if $\theta$ is a minimiser of $\omega$, then $\bar \mu_\theta$ is a minimiser of $\mathcal F$. From~\eqref{eq:defomega},
\[\inf \omega = \inf_{\rho\in \mathcal P_2(E)} \inf_{\theta\in\mathbb H} \left\{\mathcal F(\rho) + \frac{1}{2}|\rho(\varphi)-\theta|^2 \right\} = \inf \mathcal F \,.\]
If $\theta$ is a minimiser of $\omega$, by the variational definition of $\bar\mu_\theta$ and the fact that $\theta= \C^2 m_\theta = \C\bar\mu_\theta(\varphi)$,
\[\inf\mathcal F = \inf \omega = \omega(\theta) = \mathcal F(\bar\mu_\theta) + \frac12 |\bar\mu_\theta(\varphi)-\C^{-1}\theta|^2 = \mathcal F(\bar\mu_\theta)\,,\]
which concludes.
\end{proof}

\begin{lemma}\label{lem:uniqueminimumtheta}
Assume that $\mathcal E$ is of the form~\eqref{eq:energy-quadra} with $\mathcal E_c$ flat-convex, that we can decompose $\psi=(\psi_\ell,\psi_b)$ where $\psi_\ell$ is linear and $\psi_b$ is bounded (simply $\psi = \psi_b$ is bounded if $E=\T^d$), and that  there exists $C,c\geqslant  0$ such that, for any $\mu\in\mathcal P_2(E)$,
\begin{equation}
\label{loc:fztmlmaa}
\mu(V) + \mathcal E_c(\mu ) \geqslant c \int_{E} |x|^2\mu(\dd x) - C 
\end{equation}
(with $c=0$ if $E=\T^d$). Then, for all $\theta\in \mathbb H$, $\mathcal F_\theta$ in~\eqref{eq:defFtheta} admits a unique minimiser $\bar\mu_\theta\in\mathcal P_2(E)$.
\end{lemma}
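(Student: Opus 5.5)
The plan is the direct method for existence, then a strict flat-convexity argument for uniqueness. Write
\[\mathcal F_\theta(\rho) = \rho(V) + \mathcal E_c(\rho) + \mathcal H(\rho) - \theta\cdot\rho(\psi)\,,\]
since the term $-\frac12|\rho(\varphi)|^2$ in~\eqref{eq:energy-quadra} cancels exactly with the $+\frac12|\rho(\varphi)|^2$ added in~\eqref{eq:defFtheta}. Thus $\mathcal F_\theta$ is the sum of a flat-convex part $\mathcal E_c$, a linear part $\rho\mapsto\rho(V)-\theta\cdot\rho(\psi)$, and the entropy, which is strictly flat-convex.

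\textbf{Uniqueness.} If $\rho_0\neq\rho_1$ are two minimisers, then $\rho_{1/2}=\frac12(\rho_0+\rho_1)$ satisfies $\mathcal F_\theta(\rho_{1/2})<\frac12\mathcal F_\theta(\rho_0)+\frac12\mathcal F_\theta(\rho_1)$. This follows from the strict convexity of $u\mapsto u\ln u$ and finiteness of all terms at minimisers, and it contradicts minimality.

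\textbf{Coercivity.} Split $\theta\cdot\rho(\psi)=\theta_\ell\cdot\rho(\psi_\ell)+\theta_b\cdot\rho(\psi_b)$. The bounded part is controlled by $|\theta_b|\,\|\psi_b\|_\infty$. Since $\psi_\ell$ is linear, $|\theta_\ell\cdot\rho(\psi_\ell)|\leqslant |\theta_\ell|\,\|\psi_\ell\|\int|x|\rho\leqslant \frac c2\int|x|^2\rho + C_\theta$. By~\eqref{loc:fztmlmaa},
\[\mathcal F_\theta(\rho)\geqslant \frac c2\int|x|^2\rho + \mathcal H(\rho) - C'_\theta\,.\]
The entropy is bounded below by $-C''-\frac c4\int|x|^2\rho$, by comparison with a Gaussian of small variance (relative entropy nonnegative). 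Hence $\mathcal F_\theta$ is bounded below and its sublevel sets have uniformly bounded second moments and entropy. On $\T^d$ with $c=0$, everything is bounded and the entropy is at least $0$.

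\textbf{Existence.} Take a minimising sequence $\rho_n$. The bounds give tightness, and the entropy bound gives uniform integrability of the densities (de la Vallée-Poussin), so a subsequence converges weakly in $L^1$ and narrowly to some $\rho_*\in\mathcal P_2(E)$; the second moment passes by Fatou. Then I pass to the limit term by term:
\begin{itemize}
\item entropy is lower semicontinuous (convex and strongly l.s.c. in $L^1$, hence weakly l.s.c.);
\item $\rho(V)$ is l.s.c. by Fatou, since $V$ is continuous and bounded below by a quadratic minus a constant (its negative part is controlled by the uniform moment bound);
\item $\rho(\psi_b)$ and $\rho(\psi_\ell)$ converge by weak $L^1$ convergence and uniform integrability of $|x|$ from the second-moment bound;
\item $\mathcal E_c$ is convex; I take it to be lower semicontinuous for this convergence, which holds under the standing regularity (e.g.\ continuity under Assumption~\ref{assum:PL->unifLSI}).
\end{itemize}
Thus $\mathcal F_\theta(\rho_*)\leqslant\liminf\mathcal F_\theta(\rho_n)$, and $\rho_*$ is a minimiser.

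The main obstacle is this lower semicontinuity of $\rho(V)+\mathcal E_c(\rho)$. The lemma only assumes flat convexity and the lower bound~\eqref{loc:fztmlmaa}, so I would invoke the implicit well-posedness Assumption~\ref{ass:basic} together with continuity of $\frac{\delta\mathcal E_c}{\delta m}$. If needed, I would instead derive l.s.c.\ from the convexity inequality $\mathcal E_c(\rho)\geqslant \mathcal E_c(\mu)+(\rho-\mu)(\frac{\delta\mathcal E_c}{\delta m}(\mu))$ by taking a supremum of these continuous affine minorants.
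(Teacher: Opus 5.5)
Your proposal is correct and takes essentially the paper's route. Uniqueness comes from the strict flat-convexity contributed by the entropy. Coercivity comes from a Young bound on the linear part $\theta_\ell\cdot\rho(\psi_\ell)$ together with comparing $\mathcal H$ to a Gaussian relative entropy. Existence comes from lower semicontinuity.

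Your handling of that last step is more careful than the paper's, which simply asserts that lower semicontinuity follows from flat convexity; you use tightness, de la Vallée-Poussin, and lower semicontinuity of $\mathcal E_c$ as a supremum of affine minorants built from $\frac{\delta \mathcal E_c}{\delta m}$. Two small corrections: the comparison Gaussian must have large variance (precision of order $c$), not small. Also, on $\R^d$ the lower semicontinuity of the entropy rests on the uniform second-moment bound along the minimising sequence, not on $L^1$ convergence alone.
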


\begin{proof}[Proof of Lemma~\ref{lem:uniqueminimumtheta}]
With the form~\eqref{eq:energy-quadra}, for any $\theta\in\mathbb H$, the energy~\eqref{eq:Etheta} is
\[\mathcal E_\theta(\rho)  = \rho(V) + \mathcal E_c(\rho) -  \theta\cdot \rho(\rp) \,, \]
and it is flat-convex. Since the entropy is strictly flat-convex, so is $\mathcal F_\theta  = \mathcal E_\theta + \mathcal H$. When $E=\T^d$, from~\eqref{loc:fztmlmaa}, the bound on $\psi$ and the fact that the entropy is bounded below by the one of the uniform measure, we get that $\mathcal F_\theta$ is lower bounded. When $E=\R^d$, similarly,
\begin{align*}
\mathcal F_\theta(\rho )& \geqslant \frac c2 \int_{\R^d} |x|^2\rho(\dd x) - C - \|\rp_b\|_\infty |\theta|_b - \frac{1}{2c} |\theta_\ell|^2 \|\na \rp_\ell \|_\infty^2 + \mathcal H(\rho) \\
& \geqslant  \frac c4  \int_{\R^d} |x|^2\rho(\dd x) +  \mathcal H \po \rho | \mathcal N(0,c^{-1} I_d)\pf - C_\theta'
\end{align*}
for some constant $C_\theta'$. Hence $\mathcal F_\theta$ is lower-bounded and coercive. In both cases $E\in\{\T^d,\R^d\}$, by lower semi-continuity (implied by the flat convexity), it reaches its infimum at some $\bar\mu_\theta \in \mathcal P_2(E)$, which is unique by strong convexity. 
\end{proof}

\new{
We conclude with a situation where we can check the regularity condition~\eqref{eq:regularitymutheta}.
}
\new{
\begin{lemma}\label{lem:regularitycheck}
Under the settings of Lemma~\ref{lem:uniqueminimumtheta}, suppose furthermore Assumption~\ref{assum:finitedim}, that $V(x) \geqslant C|x|^2 - C$ for all $x\in E$  for some $c,C>0$ (if $E=\R^d$) and that
\begin{equation}
\label{eq:formeRenergie}
\mathcal E \po \mu \pf = \mu(V) +  R \po \psi  \pf - \frac12 |\mu(\varphi)|^2 \,,
\end{equation}
for a convex $R\in\mathcal C^2(\mathbb H,\R)$ with $\na^2 R$ growing at most polynomially.  Introducing for $\theta,m\in\mathbb H^2$ the probability density
\begin{equation}
\label{eq:numtheta}
\nu_{\theta,m}(x) \propto \exp\po - V(x) - \na R(m) \cdot \psi(x) + \theta \cdot \psi(x) \pf, 
\end{equation}
then, for all $\theta \in \mathbb H$, $\bmu = \nu_{\theta,m_\theta}$. Moreover, $m_\theta$ is the unique solution of the fixed point problem
\[m = \nu_{\theta,m}(\psi)\,,\]
and $\theta \mapsto m_\theta$ is $\mathcal C^1$ with $\na_\theta m_\theta$  a positive self-adjoint operator smaller than $\mathrm{Cov}_{\bmu}(\psi)$. Finally, $\theta \mapsto \mathcal F(\bmu)$ and $\hat w$ are $\mathcal C^1$, with
\[\hat \omega(\theta) = R(m_\theta)   -\na R(m_\theta) \cdot m_\theta   + \ln \int _{E} \exp\po - V - \na R(m_\theta) \cdot \psi + \theta \cdot \psi \pf  \,. \]
\end{lemma}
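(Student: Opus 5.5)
The plan is to read \eqref{eq:formeRenergie} as $\mathcal E(\mu)=\mu(V)+R(\mu(\psi))-\frac12|\mu(\varphi)|^2$. Then $\mathcal E_c(\mu)=R(\mu(\psi))$ is flat-convex because $R$ is convex and $\mu\mapsto\mu(\psi)$ is linear, and
\[\mathcal F_\theta(\rho)=\rho(V)+R(\rho(\psi))-\theta\cdot\rho(\psi)+\mathcal H(\rho).\]
Lemma~\ref{lem:uniqueminimumtheta} gives a unique minimiser $\bmu$, and the self-consistency equation \eqref{eq:selfconstitenttheta} applies. Since $\frac{\delta\mathcal E_0}{\delta m}(\mu,x)=V(x)+\na R(\mu(\psi))\cdot\psi(x)$, this equation reads $\bmu=\nu_{\theta,m_\theta}$ with $m_\theta=\bmu(\psi)$.

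Conversely, suppose $m=\nu_{\theta,m}(\psi)$. Then $\nu_{\theta,m}$ satisfies the first-order condition of the strictly convex functional $\mathcal F_\theta$, so it is a critical point and hence the global minimiser. This gives uniqueness of the fixed point. A cleaner route is to introduce the convex function
\[\Lambda(\eta)=\ln\int e^{-V+\eta\cdot\psi}.\]
It is smooth, by the growth assumption on $V$ and the linear/bounded structure of $\psi$. The fixed point equation becomes $m=\na\Lambda(\theta-\na R(m))$. Equivalently, $m_\theta$ is the unique minimiser of the strictly convex function $G(m,\theta)=R(m)+\Lambda^*(m)-\theta\cdot m$, or dually the unique critical point of $\eta\mapsto \Lambda(\eta)+R^*(\theta-\eta)$.

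For the regularity, I would apply the implicit function theorem to
\[\Psi(m,\theta)=m-\na\Lambda(\theta-\na R(m))=0.\]
Write $\Sigma=\na^2\Lambda(\theta-\na R(m_\theta))=\mathrm{Cov}_{\bmu}(\psi)$, which is positive definite, and $H=\na^2R(m_\theta)\geqslant0$. Then $\partial_m\Psi=I+\Sigma H$, which is invertible because it equals $\Sigma^{1/2}(I+\Sigma^{1/2}H\Sigma^{1/2})\Sigma^{-1/2}$. Hence $\theta\mapsto m_\theta$ is $\mathcal C^1$ with
\[\na_\theta m_\theta=(I+\Sigma H)^{-1}\Sigma=(\Sigma^{-1}+H)^{-1}.\]
This operator is self-adjoint and positive, and since $H\geqslant0$ it satisfies $(\Sigma^{-1}+H)^{-1}\leqslant\Sigma$.

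The required $\mathcal C^2$ regularity of $\Lambda$ is obtained by differentiating under the integral. The polynomial growth of $\na^2R$ and the Gaussian-type tails coming from $V(x)\geqslant C|x|^2-C$ give the integrability needed for dominated convergence; this dominated-convergence bookkeeping is the main technical point. The integrals are finite only where $\theta-\na R(m)$ is controlled against the quadratic confinement, so I would check that the linear part $\psi_\ell$ stays subquadratic, which it does.

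Finally, compute $\hat\omega(\theta)=\mathcal F_\theta(\bmu)$ by \eqref{eq:omegaegalite}. Since $\ln\bmu=-V-\na R(m_\theta)\cdot\psi+\theta\cdot\psi-\Lambda(\theta-\na R(m_\theta))$, we get
\[\mathcal H(\bmu)=-\bmu(V)-\na R(m_\theta)\cdot m_\theta+\theta\cdot m_\theta-\Lambda(\cdot).\]
Adding $\bmu(V)+R(m_\theta)-\theta\cdot m_\theta$ yields the stated formula with the $+\ln\int$ sign convention; this is a sign bookkeeping to be checked. The expression is $\mathcal C^1$ as a composition of $\mathcal C^1$ maps, and so is $\mathcal F(\bmu)=\hat\omega(\theta)-\frac12|\C m_\theta|^2+m_\theta\cdot\theta$. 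This verifies \eqref{eq:regularitymutheta}.
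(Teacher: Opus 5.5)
Your proposal follows the paper's proof essentially step by step. The self-consistency equation gives $\bar\mu_\theta=\nu_{\theta,m_\theta}$. Uniqueness of the fixed point comes from uniqueness of the minimiser of the strictly convex $\mathcal F_\theta$. The implicit function theorem is applied to $m-\nu_{\theta,m}(\psi)$, with $\partial_m=I+\Sigma\nabla^2R(m_\theta)$ inverted via a similarity, which yields the same $\nabla_\theta m_\theta$. Finally, $\hat\omega$ is computed directly from the explicit form of $\bar\mu_\theta$; your Legendre-dual reformulation is an optional extra. Two points need correcting.

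First, the sign. Your own computation gives $\hat\omega(\theta)=R(m_\theta)-\nabla R(m_\theta)\cdot m_\theta-\Lambda(\theta-\nabla R(m_\theta))$, i.e.\ with $-\ln\int$. This is the correct formula: for $R=0$ it is just the Gibbs variational principle $\inf_\rho\{\rho(V)-\theta\cdot\rho(\psi)+\mathcal H(\rho)\}=-\ln\int_E e^{-V+\theta\cdot\psi}$. The $+\ln\int$ in the statement is a sign typo, and so is the corresponding line of the paper's proof, where $\int\bar\mu_\theta\ln\bar\mu_\theta$ in fact contributes $-\ln Z$. You should say so rather than assert that your computation reproduces the stated formula; the typo has no effect on the $\mathcal C^1$ conclusions.

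Second, you assert that $\Sigma=\mathrm{Cov}_{\bar\mu_\theta}(\psi)$ is positive definite. Nothing in the hypotheses prevents $u\cdot\psi$ from being constant on $E$ for some $u\neq0$, e.g.\ a constant component of $\psi$. In that case $\Sigma^{-1/2}$ and $(\Sigma^{-1}+H)^{-1}$ are undefined. The paper handles this by a preliminary reduction to a non-degenerate $\psi$; to be accurate, that reduction must remove every direction $u$ with $u\cdot\psi$ constant, not only those orthogonal to the range of $\psi$. You can also bypass the issue entirely. If $(I+\Sigma H)v=0$ with $\Sigma,H\geqslant0$, then $Hv\cdot v=-|\Sigma^{1/2}Hv|^2$ forces $\Sigma^{1/2}Hv=0$, hence $v=-\Sigma Hv=0$. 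Moreover $(I+\Sigma H)^{-1}\Sigma=\Sigma^{1/2}(I+\Sigma^{1/2}H\Sigma^{1/2})^{-1}\Sigma^{1/2}$, which is self-adjoint, nonnegative and bounded by $\Sigma$ without ever inverting $\Sigma$.
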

}

\begin{proof}
Without loss of generality, we assume that the orthogonal of $\{ \psi(x),\ x\in E\}$  is $\{0\}$. Indeed, if it is not the case, we can replace $\psi$ by $\Pi_0\psi$ where $\Pi_0$ is the orthogonal projection onto this space, set $\C_0 = (\Pi_0 \C^2\Pi_0)^{1/2}$ and $\varphi_0 = \C_0 \psi$, and  the energy~\eqref{eq:formeRenergie} is unchanged. This additional condition implies that, for all $\theta$, $\Sigma^2_\theta := \mathrm{Cov}_{\bmu}(\psi)$ is a non-singular positive matrix.

\new{
With the notation~\eqref{eq:numtheta}, the self-consistency equation~\eqref{eq:selfconstitenttheta} becomes
\[\bmu = \nu_{\theta,m_\theta}\,,\qquad m_\theta=\bmu(\psi)\,. \]
In other words, introducing the function $G:\mathbb H^2 \rightarrow \mathbb H$ given by
\[G(\theta,m) =  m- \nu_{\theta,m}(\psi)\,, \]
then $m_\theta$ is characterised by the equation
\[G(\theta,m_\theta) = 0\,.\]
Indeed, any solution $m$ would be such that $\nu_{\theta,m}$ would solve the self-consistency equation~\eqref{eq:selfconstitenttheta}, and by uniqueness we would have $\nu_{\theta,m}=\bmu$ and then $m= \nu_{\theta,m}(\psi) = m_\theta$. Now, $G$ is $\mathcal C^1$ in both its variables, with
\[
\na_\theta G(\theta,m) = - \Sigma^2_\theta  \,,\qquad  \na_m G(\theta,m) = I + \Sigma^2_\theta \na^2 R(m) \,.
\]
The convexity of $R$ implies that  
\[I + \Sigma^2_\theta \na^2 R(m) = \Sigma_\theta \po  I + \Sigma_\theta \na^2 R(m) \Sigma_\theta\pf \Sigma_\theta^{-1}\,,\]
is  non-singular. By the implicit function theorem, we get that $m_\theta$ is $\mathcal C^1$, with
\[\na_\theta m_\theta= \po I + \Sigma^2_\theta \na^2 R(m_\theta ) \pf^{-1} \Sigma^2_\theta = \Sigma_\theta \po  I + \Sigma_\theta \na^2 R(m_\theta ) \Sigma_\theta\pf^{-1} \Sigma_\theta \,. \]
This shows that $\na_\theta m_\theta$ is self adjoint, positive and smaller than $\Sigma_\theta^2$ (i.e. $u\cdot \na_\theta m_\theta u \leqslant |\Sigma_\theta u|^2$ for all $u\in\mathbb H$). }

\new{Next, using the self-consistency equation,
\begin{align*}
\mathcal F(\bmu) & = \bmu(V) + R(m_\theta) - \frac12 |\C m_\theta|^2 + \int_{E} \bmu \ln \bmu  \\
& =  R(m_\theta) - \frac12 |\C m_\theta|^2   +(\theta -\na R(m_\theta)) \cdot m_\theta   + \ln \int _{E} \exp\po - V - \na R(m_\theta) \cdot \psi + \theta \cdot \psi \pf   \,.
\end{align*}
This gives the expression for $\hat \omega$, and the differentiability of $\hat \omega$ and $\mathcal F$ by the differentiability of $m_\theta$, the quadratic growth of $V$ and the polynomial growth of $\na^2 R$, which allows to differentiate under the integral sign for the last term. This concludes the proof.
}
\end{proof}

\subsection{The intermediary implications}\label{sec:proof-intermediary} 

\begin{proof}[Proof of Proposition~\ref{prop:intermediaryimplications}]
First, assume that $\mathcal F$ satisfies the PŁ inequality~\eqref{eq:PLF}. Applying it with $\rho = \bar\mu_\theta$ for some $\theta \in\mathbb H$ and using that
\[\mathcal I(\rho) = \int_{E} \left|\na \ln \mu - D\mathcal E(\mu,\cdot)\right|^2 \mu =  \int_{E} \left|\na \ln \mu - D\mathcal E_0(\mu,\cdot) - \rho(\varphi)\cdot \na\varphi\right|^2 \mu  \] 
(recall the definition~\eqref{eq:Etheta} of $\mathcal E_\theta$) gives, thanks to the self-consistency equation~\eqref{eq:selfconstitenttheta}, 
\begin{align*}
\hF(m_\theta) - \inf\hF & \leqslant  \frac{1}{2\lambda} \int_{E} \left|(\C^{-1} \theta - \C \bar \mu_\theta(\rp))\cdot \na \varphi\right|^2 \bar\mu_\theta \\
& \leqslant \frac{\|\na \varphi \|_\infty^2}{2\lambda}|\C^{-1} \theta-\C m_\theta|^2\\
&= \frac{\|\na \varphi \|_\infty^2}{2\lambda}\left|\C^{-1}\na \hF(m_\theta)\right|^2\,,
\end{align*}
where we used~\eqref{eq:naomega}.  Second, assume the coarse-grained PŁ inequality~\eqref{eq:PLFhat}. Thanks to~\eqref{eq:omegaegalite} and then~\eqref{eq:naomega}, for any $\theta\in\mathbb H$,  
\[
\omega(\theta) - \inf \omega = \hF(m_\theta) - \inf\hF + \frac12|\C m_\theta- \C^{-1} \theta|^2  \leqslant \po \frac{1}{2\lambda'}  + \frac12 \pf|\C \na \omega(\theta)|^2\,.
\]
Similarly, if $\theta$ satisfies~\eqref{eq:PLomega}, for all $\theta\in\mathbb H$,
\[0 \leqslant \hF(m_\theta) - \inf\hF  = \omega(\theta) - \inf \omega - \frac12|\C m_\theta-\C^{-1}\theta|^2 \leqslant \po \frac{1}{2\lambda''} - \frac12 \pf |\C \na \omega(\theta)|^2\,.\]
If $\omega$ is not constant, there exists $\theta$ such that $\na\omega(\theta)  \neq 0$, and thus the previous inequality implies that $\lambda''<1$, and~\eqref{eq:PLFhat} with $\frac{1}{\lambda'} = \frac{1}{\lambda''}-1$.
\end{proof}

\subsection{The non-linear PŁ inequality}\label{sec:proof-nonlinearPL}

\begin{proof}[Proof of Theorem~\ref{thm:unifLSI->PL}]\
Let $\mu_*$ be a critical point of $\mathcal F$, so that $\mu_* \propto \exp (-\frac{\delta\mathcal E}{\delta m}(\mu_*,\cdot))$. Using that $\na_{x_i} U_N(\bx) = D\mathcal E(\pi_{\bx},x_i)$,
\[
\mathcal I(\mu_*^{\otimes N}|\mu_\infty^N) =  N \int_{E^N} \left|D \mathcal E(\mu_*,x_1) - D \mathcal E(\pi_{\bx},x_1) \right|^2 \mu_*^{\otimes N}(\dd \bx)  = \underset{N\rightarrow\infty } o(N)
\]
thanks to~\eqref{eq:CVDE}. The uniform-in-$N$ LSI for $\mu_\infty^N$ implies that
\[0 \leqslant \frac1N  \mathcal H(\mu_*^{\otimes N}|\mu_\infty^N) = \mathcal H(\mu_*) + \int_{E^N} \mathcal E(\pi_{\bx}) \mu_*^{\otimes N} -  \frac1N \ln Z_N  \underset{N\rightarrow\infty } \longrightarrow 0\,.
\]
Since 
\[\int_{E^N} \mathcal E(\pi_{\bx}) \mu_*^{\otimes N} \underset{N\rightarrow\infty } \longrightarrow \mathcal E\po \mu_*\pf\,,\]
according to~\eqref{eq:CVE}, we deduce that
\begin{equation}
\label{loc:sgdfhdmlkZN}
\frac1N \ln Z_N   \underset{N\rightarrow\infty } \longrightarrow \mathcal F(\mu_*) \,. 
\end{equation}
Since $Z_N$ is independent from the critical point $\mu_*$, and since we can take $\mu_*$ to be a minimiser of $\mathcal F$, we get that all critical points are minimisers and that $\frac1N\ln Z_N\rightarrow \inf\mathcal F$ as $N\rightarrow \infty$. Moreover, the argument in~\eqref{loc:argumentuniqueness} shows that the minimiser is unique, and is the limit of $\mu_\infty^{N,1}$. 

As a consequence of~\eqref{loc:sgdfhdmlkZN}, for any $\mu \in \mathcal P_2(\R^d)$, by the law of large number, the lower semi-continuity of $\mathcal E$ and the Fatou lemma, 
\[\liminf_{N\rightarrow \infty} \frac1N  \mathcal H(\mu^{\otimes N}|\mu_\infty^N)  = \liminf_{N\rightarrow \infty} \co  \mathcal H(\mu) + \int_{E^N} \mathcal E(\pi_{\bx}) \mu^{\otimes N} -  \frac1N \ln Z_N    \cf \geqslant \mathcal F^c(\mu)\,.\] 
Moreover, for any $\mu\in\mathcal P_2(E)$ such that $\mathcal I(\mu) <\infty$,
\[\frac1N \mathcal I\po \mu^{\otimes N}|\mu_\infty^N \pf = \int_{E^N} \left|\na \ln \mu(x_1) - D \mathcal E(\pi_{\bx},x_1) \right|^2 \mu^{\otimes N}(\dd \bx) \,, \]
hence, writing $e_N $ the left-hand side of~\eqref{eq:CVDE}, 
\[\left|\frac1N \mathcal I\po \mu^{\otimes N}|\mu_\infty^N \pf - \mathcal I(\mu) \right| \leqslant 2\sqrt{e_N \mathcal I(\mu)} + e_N  \underset{N\rightarrow\infty } \longrightarrow 0\,. \]
Hence, dividing by $N$ the LSI~\eqref{eq:unifLSI} and taking the limsup as $N\rightarrow \infty$ gives the PŁ inequality~\eqref{eq:PLF} with $\lambda \geqslant \bar \lambda$,  which concludes the proof.
\end{proof}

\subsection{Convexity conditions}

\begin{proof}[Proof of Lemma~\ref{lem:convexite}]
Recall from Lemma~\ref{lem:omegaFhat} that  $\hF(m_\theta)  = \mathcal F(\bar\mu_\theta)$ with $  m_\theta = \bar\mu_\theta(\psi)$.  For $\theta,\zeta\in \mathbb H$ such that $m_\theta,m_\zeta \in \mathcal B$ and $t\in[0,1]$, 
\[t \bar \mu_\theta(\rp) + (1-t)\bar\mu_\zeta(\rp) = tm_\theta + (1-t)m_\zeta \in \mathcal B \,. \]
As a consequence, by definition~\eqref{eq:defFhat} of $\hF$,
\begin{align*}
\hF\po tm_\theta + (1-t)m_\zeta\pf & \leqslant \mathcal F \po t \bar \mu_\theta  + (1-t)\bar\mu_\zeta  \pf \\
& \leqslant t \mathcal F(\bar\mu_\theta) + (1-t) \mathcal F(\bar\mu_\zeta)  - \frac{\kappa}{2}t(1-t) |\bar \mu_\theta(\varphi)-\bar \mu_\zeta(\varepsilon)|^2 \\
&= t \hF(m_\theta) + (1-t) \hF(m_\zeta) - \frac{\kappa}{2}t(1-t)|\C(m_\theta - m_\zeta)|^2\,.
\end{align*}
For the second statement, we follow the proof of \cite[Lemma 4.3]{Dagallier}. Using that $\omega(\theta) = \frac12|\C^{-1}\theta|^2 + \hat \omega(\theta)$ with $\hat \omega$ given by~\eqref{eq:defomega1} (we can restrict to $\theta,\zeta \in \mathrm{Im}\C^{-1}$, otherwise $\omega(\theta)$ or $\omega(\zeta)$ is infinite and there is nothing to prove)
\begin{eqnarray*}
\lefteqn{ t \omega(\theta_1) + (1-t)\omega(\theta_0)}
\\
 &= &\inf_{m_0,m_1\in \mathbb H} \left\{ t \co \hF(m_1) + \frac12|\C m_1 - \C^{-1}\theta_1|^2  \cf + (1-t)\co \hF(m_0) + \frac12|\C m_0 - \C^{-1}\theta_0|^2  \cf \right\} \\
 & \geqslant & \inf_{m_0,m_1\in \mathbb H} \left\{ \hF(m_t) + \frac{\kappa}{2} t(1-t)|\C(m_1-m_0)|^2 +  \frac{t}2|\C m_1 - \C^{-1}\theta_1|^2   + \frac{1-t}2|\C m_0 - \C^{-1}\theta_0|^2    \right\}
\end{eqnarray*}
where $m_t = tm_1+(1-t) m_0$. Write  Instead of $m_0,m_1$, we can run the infimum over $m_t $ and $\Delta m =m_1-m_0$ (i.e. write $m_0 = m_t - t \Delta m$ 	and $m_1 = m_t+(1-t) \Delta m $). Writing $\theta_t = t\theta_1+(1-t)\theta_0$ and $\Delta \theta= \theta_1-\theta_0$ and rearranging  the squares give
\begin{eqnarray*}
\lefteqn{ t \omega(\theta_1) + (1-t)\omega(\theta_0)}
\\
& \geqslant & \inf_{m_t,\Delta m\in \mathbb H} \left\{ \hF(m_t) + \frac{\kappa}{2} t(1-t)|\C\Delta m|^2 +  \frac{1}2|\C m_t - \C^{-1}\theta_t|^2   + \frac{t(1-t)}2|\C\Delta  m - \C^{-1}\Delta \theta|^2    \right\}\\
& = & \omega(\theta_t) +   \frac{t(1-t)\kappa }{2(1+\kappa)}|\C^{-1} \Delta \theta|^2\,,
\end{eqnarray*}
as desired.
\end{proof}

\subsection{The degenerate case}

\begin{proof}[(Partial) proof of Theorem~\ref{thm:intermediaryimplications-dege}]
The proofs of items 1, 2 and 3 (resp. 5) are mutatis mutandis the same as in Proposition~\ref{prop:intermediaryimplications} (resp. Theorem~\ref{thm:unifLSI->PL}), hence we omit them. Let us simply mention that in item 3 we use that $\na \omega$ is neither constant nor Lipschitz to say that for all $r\geqslant 0$ there exists a $\theta\in\mathbb H$ with $|\C\na \omega(\theta)|=r$ (in fact, in view of~\eqref{eq:naomega}, it is easily seen that $\omega$ cannot be Lipschitz), which forces $\Phi_\omega(r)>r$.

Let us provide some details for item 4 for which, similarly, the arguments largely overlap those of Theorem~\ref{thm:PL->unifLSI}. In particular, Lemmas~\ref{lem:microscopicLSI}, \ref{lem:gammaX} and \ref{lem:omegaNtoomega} are still true (they only depend on Assumption~\ref{assum:PL->unifLSI}, not on the LSI satisfied by $\tilde\nu_N$). Starting from the decomposition~\ref{eq:micromacroentropie}, we can still control the microscopic term by~\eqref{loc:qfsffa}. The first difference appears at~\eqref{loc:vdfgemfa}, which becomes
\begin{equation}
\label{loc:vdfgemfa-dege}
\frac1N \int_{\mathbb H} F \ln F \dd \nu_N  \leqslant \widetilde{\Phi}\po \frac{4}{N^2} \int_{\mathbb H} |\C \na_\theta \sqrt{ F}|^2  \dd \nu_N \pf \,.
\end{equation}
Then we control $|\C \na_\theta \sqrt{F}|$ exactly as in the non-degenerate case with~\eqref{loc:dmlpaldfdg}, and then we follow the proof and use that $\widetilde{\Phi}$ is non-decreasing to end up with
\[\frac1N \int_{\mathbb H} F \ln F \dd \nu_N  \leqslant \widetilde{\Phi}\po \frac{\re{4}M\|\na \varphi\|_\infty^2 }{N} \mathcal I(\nu|\mu_\infty^N)\pf \,.
\]
Combining this with the microscopic part~\eqref{loc:qfsffa} concludes the proof of item 5.
\end{proof}

\subsection*{Acknowledgments}

The research of PM is supported by the project CONVIVIALITY (ANR-23-CE40-0003) of the
French National Research Agency. PM thanks Thierry Bodineau, Benoît Dagallier and Songbo Wang for fruitful discussions related to \cite{Dagallier,SongboLSI,Monmarchemetastable}.  The total amount of generative artificial
intelligence tools involved in this work is exactly zero.

\bibliographystyle{plain} 
\bibliography{biblio}

\end{document}